\theoremstyle{plain}
\newtheorem{thm}{Theorem}
\newtheorem{prop}{Proposition}[section]
\newtheorem{lem}[prop]{Lemma}
\newtheorem{rmk}[prop]{Remark}
\newcommand {\R} {\mathbb{R}} 
 \newcommand {\N} {\mathbb{N}}
\newcommand {\p} {\partial}
\newcommand {\D} {\Delta}
\newcommand {\supp} {\text{supp}}
\DeclareMathOperator {\dist} {dist}
\DeclareMathOperator{\F} {\mathcal{F}}
\title[Uniqueness and reconstruction for the fractional Calder\'on problem]{Uniqueness and reconstruction for the fractional Calder\'on problem with a single measurement}
\author[T. Ghosh]{Tuhin Ghosh}
\address{Jockey Club Institute for Advanced Study, HKUST, Hong Kong}
\email{iasghosh@ust.hk}
\author[A. R\"uland]{Angkana R\"uland}
\address{Max-Planck-Institute for Mathematics in the Sciences, Inselstr. 22, 04103 Leipzig}
\email{rueland@mis.mpg.de}
\author[M. Salo]{Mikko Salo}
\address{Department of Mathematics and Statistics, University of Jyv\"askyl\"a}
\email{mikko.j.salo@jyu.fi}
\author[G. Uhlmann]{Gunther Uhlmann}
\address{Department of Mathematics, University of Washington / Jockey Club Institute for Advanced Study, HKUST, Hong Kong}
\email{gunther@math.washington.edu}
\begin{document}

\let\thefootnote\relax\footnotetext{Accepted Manuscript for Journal of Functional Analysis, volume and pages to be assigned by Elsevier, DOI:10.1016/j.jfa.2020.108505. This manuscript version is made available under the CC-BY-NC-ND 4.0 license http://creativecommons.org/licenses/by-nc-nd/4.0} 
\maketitle

\begin{abstract}
We show global uniqueness in the fractional Calder\'on problem with a single measurement and with data on arbitrary, possibly disjoint subsets of the exterior. The previous work \cite{GhoshSaloUhlmann} considered the case of infinitely many measurements. The method is again based on the strong uniqueness properties for the fractional equation, this time combined with a unique continuation principle from sets of measure zero. We also give a constructive procedure for determining an unknown potential from a single exterior measurement, based on constructive versions of the unique continuation result that involve different regularization schemes.
\end{abstract}

\section{Introduction}
\label{sec:introduction}

In this article we show global uniqueness in the fractional Calder\'on problem with a single measurement, and provide a reconstruction algorithm. The fractional Calder\'on problem asks to determine an a priori unknown potential $q$ (in a suitable function space, e.g.\ $q\in L^{\infty}(\Omega)$) from exterior measurements encoded by the Dirichlet-to-Neumann map, formally given by 
\begin{align*}
\Lambda_q: H^s(\Omega_e) \rightarrow (H^s(\Omega_e))^*, \quad
f \mapsto (-\D)^s u|_{\Omega_e},
\end{align*}
where the functions $u, f$ are related through the equation
\begin{align}
\label{eq:frac_Schr}
\begin{split}
((-\D)^s + q) u & = 0 \mbox{ in } \Omega,\\
u & = f \mbox{ in } \Omega_e.
\end{split}
\end{align}
Here $\Omega \subset \R^n$ is a bounded open set, $\Omega_e = \R^n \setminus \overline{\Omega}$ is the exterior domain, and the fractional Laplacian is defined through its Fourier symbol, i.e., $(-\D)^s w := \F^{-1} |\xi|^{2s} \F w$ for $w \in H^s(\R^n)$, where $s$ is a real number with $0 < s < 1$. We will assume the following condition:
\begin{equation} \label{dirichlet_uniqueness}
\left\{ \begin{array}{c} \text{if $u \in H^s(\R^n)$ solves $((-\Delta)^s + q)u = 0$ in $\Omega$ and $u|_{\Omega_e} = 0$,} \\
\text{then $u \equiv 0$.} \end{array} \right.
\end{equation}
This means that zero is not a Dirichlet eigenvalue of $(-\D)^s + q$, and one indeed has a unique solution $u \in H^s(\R^n)$ for any exterior value $f$.

This problem, which was first introduced in \cite{GhoshSaloUhlmann}, should be viewed as a fractional analogue of the classical Calder\'on problem, which is a well-studied inverse problem for which we refer to the survey article \cite{Uhlmann_survey} and the references therein. Due to the results of \cite{GhoshSaloUhlmann}, it is known that the Dirichlet-to-Neumann map uniquely determines the potential $q$, i.e.\ if $q_1,q_2\in L^{\infty}(\Omega)$ are such that zero is not a Dirichlet eigenvalue of $(-\D)^s + q_i$, $i\in\{1,2\}$, then 
\begin{align*}
\Lambda_{q_1} = \Lambda_{q_2} \implies q_1 = q_2.
\end{align*}
Moreover, uniqueness holds if the measurements are made on arbitrary, possibly disjoint subsets of the exterior. In \cite{RS17a} this has further been extended to (almost) optimal function spaces, including potentials in $L^{\frac{n}{2s}}(\Omega)$. Logarithmic stability for this inverse problem was also proved in \cite{RS17a}, and this stability is optimal \cite{RulandSalo_instability}. Uniqueness for recovering a potential in the anisotropic fractional equation $((-\mathrm{div}(A \nabla u))^s + q)u = 0$ was shown in \cite{GhoshLinXiao}, and related inverse problems for the semilinear equation $(-\Delta)^s u + q(x,u) = 0$ were studied in \cite{LaiLin}. A reconstruction method for positive potentials based on monotonicity methods was given in \cite{HarrachLin}. See also the survey article \cite{Salo_fractional_survey}.

All previously mentioned works deal with the case of infinitely many measurements, where one knows $\Lambda_q(f)|_{W_2}$ for all $f \in C^{\infty}_c(W_1)$ for some open subsets $W_j$ of $\Omega_e$. Here, we show that measuring $\Lambda_q(f)|_{W_2}$ for a single (nontrivial) $f \in C^{\infty}_c(W_1)$ is enough to determine the potential. Moreover, we give a constructive procedure for determining $q$ from a single measurement (we refer to Section \ref{sec:fs} for the precise functional set-up and the definition of the relevant function spaces).

\begin{thm} \label{thm_main}
Let $\Omega \subset \R^n$, $n \geq 1$, be a bounded open set, let $0 < s < 1$, and let $W_1, W_2 \subset \Omega_e$ be open sets with $\overline{\Omega} \cap \overline{W}_1 = \emptyset$. 
Assume that either
\begin{itemize}
\item $s\in [\frac{1}{4},1)$ and $q\in L^{\infty}(\Omega)$,
\item or $q \in C^0(\overline{\Omega})$ {\rm{(}}in which case $s\in (0,1)$ can be chosen arbitrarily{\rm{)}},
\end{itemize}
and that \eqref{dirichlet_uniqueness} holds. 
Given any fixed function $f \in \widetilde{H}^s(W_1) \setminus \{ 0 \}$, the potential $q$ is uniquely determined and can be reconstructed from the knowledge $\Lambda_q(f)|_{W_2}$.
\end{thm}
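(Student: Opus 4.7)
My plan is to reduce the question to two unique continuation statements: the standard one for $(-\Delta)^s$ on an open set, already central to \cite{GhoshSaloUhlmann}, and a more delicate one on sets of positive Lebesgue measure which appears to be the genuinely new analytic input. I would fix the exterior data $f$ and, for any two admissible potentials $q_1,q_2$ (each satisfying \eqref{dirichlet_uniqueness}) with $\Lambda_{q_1}(f)|_{W_2}=\Lambda_{q_2}(f)|_{W_2}$, let $u_j\in H^s(\R^n)$ be the unique solutions of $((-\Delta)^s+q_j)u_j=0$ in $\Omega$ with $u_j|_{\Omega_e}=f$. The difference $w:=u_1-u_2$ then satisfies $w|_{\Omega_e}=0$, in particular $w|_{W_2}=0$, and the equality of Dirichlet-to-Neumann data translates into $(-\Delta)^s w|_{W_2}=0$. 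The standard open-set UCP for $(-\Delta)^s$ forces $w\equiv 0$, hence $u_1\equiv u_2$ on all of $\R^n$. Subtracting the two Schr\"odinger equations yields the pointwise identity $(q_1-q_2)\,u_1=0$ a.e.\ in $\Omega$.

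The hard part will be extracting $q_1=q_2$ from this identity: I need that the fixed nontrivial solution $u_1$ cannot vanish on a subset of $\Omega$ of positive Lebesgue measure. This is the ``unique continuation from sets of measure zero'' advertised in the abstract, and it is precisely where the full strength of the hypotheses on $s$ and $q$ is consumed. The argument I would run is: if $E:=\{x\in\Omega:u_1(x)=0\}$ had positive measure, then $u_1=0$ on $E$ and, by the equation, $(-\Delta)^s u_1=-q_1u_1=0$ on $E$ as well, so both $u_1$ and its fractional Laplacian vanish on a set of positive measure. A measurable-set UCP for $(-\Delta)^s$ in exactly the stated Sobolev regimes (plausibly via a quantitative Caffarelli-Silvestre extension together with Carleman estimates for continuous $q$, and a more careful $L^\infty$-adapted variant for $s\geq\tfrac{1}{4}$) would then force $u_1\equiv 0$ on $\R^n$, contradicting $u_1|_{\Omega_e}=f\neq 0$. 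Proving this measurable-set UCP is where I expect the real work to lie; the reduction above is a short postlude.

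For the reconstruction, once uniqueness is in hand I would proceed in two constructive steps. Step one recovers $u$ inside $\Omega$ from the overdetermined Cauchy-type data $\bigl(u|_{\Omega_e}=f,\ (-\Delta)^s u|_{W_2}=\Lambda_q(f)|_{W_2}\bigr)$, which specify $u\in H^s(\R^n)$ uniquely by the argument above; I would implement this as a regularized inversion of the UCP, for instance via Tikhonov-type minimization of an $H^s$-penalty plus the squared defect between the measured $\Lambda_q(f)|_{W_2}$ and the fractional Laplacian of a trial extension of $f$ into $\Omega$, with the regularization parameter driven to zero along a tuned schedule. Step two reads off the potential pointwise as $q(x)=-(-\Delta)^s u(x)/u(x)$ on $\Omega\setminus E$, a set of full measure in $\Omega$ by the previous paragraph. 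The freedom in choosing the penalty and schedule in step one realizes the ``different regularization schemes'' alluded to in the abstract.
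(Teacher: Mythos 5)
Your reduction --- determine $u$ from the Cauchy-type data $\bigl(f,\Lambda_q(f)|_{W_2}\bigr)$ by inverting a unique continuation problem, then extract $q$ from the quotient $-(-\Delta)^s u/u$ --- is exactly the paper's strategy, and the Tikhonov-penalized inversion you describe in step one is the paper's Theorem~\ref{thm_ucp_constructive}. For the $L^\infty$ case with $s\in[\tfrac14,1)$, your use of a measurable-set unique continuation principle to exclude a positive-measure nodal set of $u$ matches Theorem~\ref{thm_ucp_positive_measure} and the paper's Step 2b.

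The gap is in the $C^0$ case. You assert that a measurable-set UCP also holds for continuous $q$ and \emph{all} $s\in(0,1)$, ``plausibly via\ldots\ Carleman estimates for continuous $q$.'' This is not available: the Carleman-estimate route needs $s\geq\tfrac14$ to absorb an $L^\infty$ (hence merely $C^0$) Robin coefficient, and the regime $s<\tfrac14$ is known only for potentials that are $C^1$ (at least in the radial direction); see Remark~\ref{rmk:s_small} and \cite{FF14}. The paper therefore does not invoke a measurable UCP at all in the $C^0$ case: Step 2a of the proof uses only the weak (open-set) UCP --- which shows that the nodal set of $u$ has empty interior in $\Omega$ --- and then the continuity of $q$ to recover $q(x_0)=\lim_{k}\bigl(-(-\Delta)^s u(x_k)/u(x_k)\bigr)$ along a sequence $x_k\to x_0$ with $u(x_k)\neq 0$. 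This is elementary and valid for every $s\in(0,1)$. Your ``compare two potentials'' uniqueness step admits a similar fix without any measurable UCP: if $q_1\neq q_2$ at some point, then by continuity $q_1\neq q_2$ on an open ball $B\subset\Omega$, so $(q_1-q_2)u=0$ forces $u|_B=0$; the equation then gives $(-\Delta)^s u|_B=-q_1u|_B=0$, and the open-set weak UCP yields $u\equiv 0$, contradicting $f\neq 0$. Either patch restores the full range $s\in(0,1)$ in the $C^0$ case. As a smaller point, be careful how you phrase the measurable UCP: you describe it as ``$u_1$ and its fractional Laplacian vanish on a set of positive measure,'' but Theorem~\ref{thm_ucp_positive_measure} is a statement about the Schr\"odinger equation $((-\Delta)^s+q)u=0$ holding on \emph{all} of $\Omega$ while $u$ vanishes on a positive-measure subset; the equation on the full open set is what feeds the Caffarelli--Silvestre/Carleman/doubling/blow-up machinery, and a principle based only on the joint vanishing of $u$ and $(-\Delta)^s u$ on $E$ would be a different (and not obviously attainable) statement.
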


We emphasize that Theorem \ref{thm_main} contains both a uniqueness result, i.e.\ the statement that $\Lambda_{q_1}(f) = \Lambda_{q_2}(f)$ for a single $f\in \widetilde{H}^s(W_1)\setminus \{0\}$ implies that $q_1 = q_2$, \textit{and} an algorithmic reconstruction result, i.e., an explicit method of recovering a potential $q$ constructively from a single function $f\in \widetilde{H}^s(W_1)\setminus \{0\}$ and the single measurement $\Lambda_q(f)$.
We note that this solves a formally well-determined inverse problem in any dimension $n \geq 1$, since we recover a function of $n$ variables (the unknown potential $q$) from a measurement that also depends on $n$ variables (the function $\Lambda_q(f)|_{W_2}$ for a fixed $f$). In contrast, the Schwartz kernel of the full DN map $\Lambda_q$ depends on $2n$ variables. Thus the inverse problem with infinitely many measurements is formally overdetermined in any dimension.

The proof of Theorem \ref{thm_main} is based on the strong uniqueness properties of the fractional equation. These were also crucial in \cite{GhoshSaloUhlmann} and subsequent works, where the uniqueness property was used to prove a strong approximation property of the fractional equation, and the approximation property was then used in solving the inverse problems. Here, in the case of a single measurement, we give a proof that only requires different versions of the uniqueness property. We remark that in the slightly different context of the recovery of an unknown obstacle, it was shown in \cite{CLL17} that in the fractional setting a single measurement suffices to recover the obstacle.

The next result is a constructive version of (a special case of) the uniqueness result stated in \cite[Theorem 1.2]{GhoshSaloUhlmann}.

\begin{thm} \label{thm_ucp_constructive}
Let $\Omega \subset \R^n$, $n \geq 1$, be a bounded open set, let $0 < s < 1$, and let $W$ be an open set with $\overline{\Omega} \cap \overline{W} = \emptyset$. Any function $v \in H^s(\R^n)$ with $\mathrm{supp}(v) \subset \overline{\Omega}$ is uniquely determined by the knowledge of $(-\Delta)^s v|_W =: h$. The function $v$ can be reconstructed from $h$ as 
\[
v = \lim_{\alpha \to 0} v_{\alpha} \qquad \mathrm{(}\text{limit in $H^s(\R^n)$}\mathrm{)},
\]
where $v_{\alpha}$ for any $\alpha > 0$ is the unique solution of the following minimization problem:
\[
v_{\alpha} = \mathrm{arg\,min}_{w \in \widetilde{H}^s(\Omega)} \left[ \| (-\Delta)^s w|_W - h \|_{H^{-s}(W)}^{2} + \alpha \|w\|_{H^s(\R^n)}^2 \right].
\]
\end{thm}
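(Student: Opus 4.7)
The plan is to separate uniqueness, which follows from the global unique continuation property (UCP) of the fractional Laplacian, from the reconstruction, which is obtained by a standard Tikhonov regularization argument applied to a bounded injective linear operator between Hilbert spaces.

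To begin, I would package the problem in terms of the bounded linear map $A \colon \widetilde{H}^s(\Omega) \to H^{-s}(W)$, $Aw := (-\Delta)^s w|_W$, so that the hypothesis $(-\Delta)^s v|_W = h$ reads $Av = h$ and uniqueness is the statement that $A$ is injective. Indeed, if $w \in \widetilde{H}^s(\Omega)$ satisfies $Aw = 0$, then $w$ vanishes on $W$ (since $\overline{W}\cap\overline{\Omega}=\emptyset$) and $(-\Delta)^s w = 0$ on $W$, so the global UCP for the fractional Laplacian used in \cite{GhoshSaloUhlmann} forces $w \equiv 0$. This is the only genuinely nonroutine ingredient and is precisely the tool invoked in \cite[Theorem 1.2]{GhoshSaloUhlmann}.

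For the reconstruction, the functional
\[
J_\alpha(w) = \|Aw - h\|_{H^{-s}(W)}^2 + \alpha\|w\|_{H^s(\R^n)}^2
\]
is continuous, strictly convex, and coercive on the Hilbert space $\widetilde{H}^s(\Omega)$, with the $\alpha$-term providing the coercivity. The direct method of the calculus of variations (or, equivalently, writing the Euler--Lagrange equation $(A^\ast A + \alpha I)v_\alpha = A^\ast h$) therefore produces the unique minimizer $v_\alpha$. Since the true $v$ is itself admissible, minimality gives
\[
J_\alpha(v_\alpha)\leq J_\alpha(v)=\alpha\|v\|_{H^s(\R^n)}^2,
\]
from which I read off two bounds: $\|v_\alpha\|_{H^s(\R^n)} \leq \|v\|_{H^s(\R^n)}$ and $\|Av_\alpha - h\|_{H^{-s}(W)} \leq \sqrt{\alpha}\,\|v\|_{H^s(\R^n)}$.

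Along any sequence $\alpha_k \to 0$ I would then extract a weakly convergent subsequence $v_{\alpha_k} \rightharpoonup \tilde v$ in $\widetilde{H}^s(\Omega)$; boundedness of $A$ gives $A v_{\alpha_k} \rightharpoonup A\tilde v$, while the second bound above forces $Av_{\alpha_k} \to h$ strongly, so $A\tilde v = h$. The injectivity of $A$ identifies $\tilde v = v$, so the whole net converges weakly and no subsequence extraction is needed in the end. Finally the first bound gives $\limsup_{\alpha \to 0}\|v_\alpha\|_{H^s(\R^n)} \leq \|v\|_{H^s(\R^n)} \leq \liminf_{\alpha \to 0}\|v_\alpha\|_{H^s(\R^n)}$, which combined with the weak convergence upgrades to strong convergence in $\widetilde{H}^s(\Omega)$, and hence in $H^s(\R^n)$. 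Apart from invoking UCP, every step is the textbook behavior of Tikhonov regularization in the noise-free, attainable-data regime.
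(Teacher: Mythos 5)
Your proof is correct and takes the same overall route as the paper, namely treating the minimization as Tikhonov regularization of the linear operator $(-\Delta)^s(\cdot)|_W$ on $\widetilde H^s(\Omega)$ and using the weak unique continuation property for injectivity. The difference is in how the Tikhonov limit is justified: the paper (in Lemma \ref{lem:ThikonovII}) first establishes that $L$ is compact with dense range (Lemma \ref{lem:Aq_Lq1}) and then invokes the regularization theorems of Colton--Kress as a black box, whereas you give a self-contained argument via the direct method, the minimality comparison $J_\alpha(v_\alpha)\le\alpha\|v\|^2$, weak compactness, identification of the weak limit by injectivity, and the standard weak-plus-norm-convergence upgrade. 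Your version is cleaner in one respect: it uses only boundedness and injectivity of $A$, whereas compactness is needed in the paper only to invoke the cited black-box theorems (it matters for the non-attainable case, which is irrelevant here since $h=Av$ by hypothesis). One small point worth flagging, which is present in the paper's statement just as much as in your write-up: the theorem hypothesizes $v\in H^s(\R^n)$ with $\supp v\subset\overline\Omega$, i.e.\ $v\in H^s_{\overline\Omega}$, while the minimization runs over $\widetilde H^s(\Omega)$; the step ``the true $v$ is admissible'' implicitly uses $H^s_{\overline\Omega}=\widetilde H^s(\Omega)$, which is automatic for Lipschitz $\Omega$ but not for an arbitrary bounded open set. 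This is a shared imprecision, not a gap specific to your argument.
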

 
The previous theorem is essentially an application of the standard Tikhonov regularization scheme to the unique continuation problem of determining $v$ from the knowledge of $(-\Delta)^s v|_W$. Analogues of the corresponding constructive unique continuation results for the case $s=1$ can for instance be found in \cite{KT04, BD10}.
In Section \ref{sec:Aq} we present two additional schemes, based on spectral regularization and minimal $L^2$ norm regularization, to achieve the same result. We note that the results in \cite{RS17a, RulandSalo_instability} strongly suggest that this unique continuation problem is highly ill-posed and has only logarithmic stability. In Section \ref{sec:stab}, we show that this is indeed the case. 
 
Theorem \ref{thm_ucp_constructive}, combined with an application of the uniqueness result in \cite[Theorem 1.2]{GhoshSaloUhlmann} in $\Omega$, would be sufficient to prove Theorem \ref{thm_main} for potentials in $C^0(\overline{\Omega})$. To deal with potentials in $L^{\infty}(\Omega)$, we also need the following unique continuation result for the fractional equation from sets of positive measure.
 
\begin{thm} \label{thm_ucp_positive_measure}
Let $\Omega \subset \R^n$, $n \geq 1$, be a bounded open set, let $ s \in[\frac{1}{4}, 1)$, and let $q \in L^{\infty}(\Omega)$. If $u \in H^s(\R^n)$ satisfies $((-\Delta)^s + q)u = 0$ in $\Omega$ and $u$ vanishes in a set of positive measure in $\Omega$, then $u \equiv 0$ in $\R^n$.
\end{thm}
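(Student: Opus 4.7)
The plan is to reduce the hypothesis of vanishing on a positive-measure set to the stronger hypothesis of vanishing to infinite order at a point, and then invoke the strong unique continuation principle (SUCP) for the fractional Schr\"odinger equation with $L^\infty$ potential, which gives $u \equiv 0$ in $\R^n$.

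By the Lebesgue density theorem, the zero set $E = \{x\in\Omega: u(x)=0\}$ has a point $x_0$ of density one. I would pass to the Caffarelli--Silvestre extension $\tilde u\in H^1_{\mathrm{loc}}(\R^{n+1}_+;y^{1-2s})$, which satisfies
\[
\nabla\cdot(y^{1-2s}\nabla\tilde u)=0 \text{ in } \R^{n+1}_+,\qquad \tilde u(\cdot,0)=u,\qquad -c_s \lim_{y\to 0^+} y^{1-2s}\p_y\tilde u=(-\Delta)^s u.
\]
On $\Omega\times\{0\}$ the weighted Neumann trace equals $c_s q u$, so $\tilde u$ solves a local degenerate elliptic problem with Robin-type data whose coefficient is in $L^\infty$.

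The core step is to show that $\tilde u$ vanishes to infinite order at $(x_0,0)$, which transfers to infinite-order vanishing of $u$ at $x_0$. I would use an Almgren-type frequency function $N(r) = r D(r)/H(r)$ built from the weighted Dirichlet energy $D(r) = \int_{B_r^+(x_0,0)} y^{1-2s}|\nabla\tilde u|^2$ and the weighted boundary $L^2$ integral $H(r) = \int_{\p B_r^+(x_0,0)} y^{1-2s}\tilde u^2$, augmented by a boundary correction to absorb the $\int qu^2$ contribution. Using the extension equation together with the Robin condition, one derives a quasi-monotonicity inequality $N'(r) \geq -C N(r) - C$, so that $N(r)$ is bounded above as $r\to 0$ unless $\tilde u$ is locally identically zero. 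On the other hand, the density-one condition at $x_0$, combined with the improved regularity of $u$ inherited from $(-\Delta)^s u = -qu \in L^\infty(\Omega)$, forces $H(r)$ to decay faster than any polynomial in $r$, so $N(r)\to\infty$. The resulting contradiction forces $\tilde u \equiv 0$ in a half-ball $B_r^+(x_0,0)$, hence $u \equiv 0$ in an open neighbourhood of $x_0$ in $\R^n$, and an application of the SUCP for $((-\Delta)^s + q)$ with $L^\infty$ potential then upgrades this to $u \equiv 0$ in $\R^n$.

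The main obstacle is controlling the $L^\infty$ potential contribution in the frequency monotonicity argument. The term $\int_{B_r(x_0)} q u^2$, generated by integrating $\tilde u \nabla\cdot(y^{1-2s}\nabla\tilde u) = 0$ by parts and substituting the Robin data, must be absorbed into the bulk weighted energy $D(r)$ via a trace/Hardy-type inequality on the half-ball. Such an inequality is available precisely when $2s \geq 1/2$, which is the assumption $s\in[\tfrac14,1)$; for smaller $s$ the trace of $\tilde u$ on $\{y=0\}$ is too rough in $L^2$ for this absorption to be uniform in $r$, and the frequency argument would not close. All other steps (existence and basic properties of the extension, SUCP, and the Lebesgue density reduction) are essentially off-the-shelf once the weighted monotonicity is established.
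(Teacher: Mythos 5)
Your proposal replaces the paper's Carleman-estimate machinery with an Almgren-type frequency function in the spirit of Fall--Felli \cite{FF14}. The overall skeleton --- pass to the Caffarelli--Silvestre extension, pick a density-one point of the zero set, establish a doubling-type estimate, and derive a contradiction --- is the same as in Section~\ref{sec:ucp}. However, the two places where your sketch compresses the argument are exactly the places where the difficulties lie, and as written they contain genuine gaps.

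First, the step ``the density-one condition at $x_0$, combined with improved regularity of $u$, forces $H(r)$ to decay faster than any polynomial, so $N(r)\to\infty$'' is not correct as a direct inference. The quantity $H(r)=\int_{\partial B_r^+}y^{1-2s}\tilde u^2$ lives on the full half-sphere, whereas the density information only concerns $u=\tilde u|_{\{y=0\}}$, and vanishing of the boundary trace on a density-one set does not, by itself, force infinite-order decay of the extension (one cannot pass from smallness of $\fint_{B_r'}u^2$ to smallness of $H(r)$ without first invoking some form of boundary Cauchy uniqueness). What the density condition actually gives, after rescaling, is that any normalized blow-up limit $u_0$ of $\tilde u$ at $(x_0,0)$ has \emph{zero Dirichlet data} on a flat boundary piece; combined with the vanishing Robin/Neumann data this yields $u_0\equiv 0$ by the boundary weak unique continuation principle for the degenerate operator, contradicting the normalization. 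That is precisely the structure of the paper's Step~1 (the smallness estimate \eqref{eq:small}) plus Step~2 (compactness and blow-up) in the proof of Proposition~\ref{prop:mUCP}. Your route still has to pass through such a blow-up/compactness step --- it does not disappear in the frequency-function formulation. The Fall--Felli argument itself has this step (a precise asymptotic/blow-up analysis of $\tilde u$ near the boundary point).

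Second, your diagnosis of the $s\ge\frac14$ restriction is off target. You attribute it to a trace/Hardy inequality failing for $2s<\frac12$, but trace inequalities of the required kind (controlling $\|\tilde u\|_{L^2(B_r')}$ by the weighted Dirichlet energy and an $L^2$ bulk term with the correct $r$-powers) hold for all $s\in(0,1)$, so the absorption of the $\int q u^2$ term in a Rellich--Pohozaev identity is not obstructed in the way you describe. In the paper's Carleman approach the threshold $s=\frac14$ appears because of a $\tau$-power mismatch between the boundary contribution on the left of \eqref{eq:Carl} (which carries $\tau^{s}$) and the $L^\infty$-potential boundary term on the right (which carries $\tau^{\frac{1-2s}{2}}$); absorbing the latter requires $\frac{1-2s}{2}\le s$, i.e.\ $s\ge\frac14$, cf.\ Remark~\ref{rmk:s_small}. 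Whether a frequency-function argument can handle an $L^\infty$ (rather than $C^1$, as in \cite{FF14}) Robin coefficient for the same range of $s$ is a nontrivial technical issue in its own right, and your ``boundary correction'' to $N(r)$ would need to be spelled out carefully before the claimed quasi-monotonicity $N'\ge -CN-C$ could be taken for granted.

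In short: the strategy is genuinely different in flavour (frequency function versus Carleman estimates), but the endgame as you state it skips the blow-up step which is essential in both approaches, and the stated reason for $s\ge\frac14$ is not the actual obstruction. To make your route rigorous you would need (i) the doubling-type consequence of bounded frequency, (ii) the smallness estimate at a density point analogous to \eqref{eq:small}, (iii) a compactness/blow-up argument showing the limit has vanishing Cauchy data, and (iv) boundary weak unique continuation for $\nabla\cdot(y^{1-2s}\nabla\cdot)=0$; that is essentially the same chain of lemmas as in Section~\ref{sec:ucp}.
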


This type of result has been proved for $C^1$ potentials in \cite{FF14}. Our proof is based on Carleman estimates and a boundary unique continuation principle for solutions of the degenerate elliptic equation $\nabla \cdot (x_{n+1}^{1-2s} \nabla u) = 0$ in $\R^{n+1}_+$ that satisfy a vanishing Robin boundary condition. The restriction $s \geq 1/4$ is required to deal with a $L^{\infty}$ Robin coefficient (and could be removed if $q$ is $C^1$ in a suitable radial direction). The same restriction also appears in the strong unique continuation principle for fractional equations with $L^{\infty}$ potentials \cite{Rue15}.

Let us conclude by describing the reconstruction procedure in Theorem \ref{thm_main}, which determines the unknown potential $q$ from a single measurement $\Lambda_q(f)|_{W_2} =: g$ corresponding to a fixed exterior Dirichlet data $f \in \widetilde{H}^s(W_1) \setminus \{0\}$. The idea is to determine the solution $u \in H^s(\R^n)$ having exterior data $f$ from the knowledge of $f$ and $g$. In the following procedure, we do this by first writing $u = f + v$ where $v \in \widetilde{H}^s(\Omega)$, and then by determining $v$:

\begin{enumerate}
\item 
Define $h := g - (-\Delta)^s f|_{W_2} \in H^{-s}(W_2)$.
\item 
Determine $v \in H^s(\R^n)$ as $v = \lim_{\alpha \to 0} v_{\alpha}$, where $v_{\alpha}$ for $\alpha > 0$ is obtained by solving the minimization problem in Theorem \ref{thm_ucp_constructive} with $W$ replaced by $W_2$.
\item 
Define $u := f + v \in H^s(\R^n)$.
\item 
Determine $q$ a.e.\ in $\Omega$ as 
\[
q := -\frac{(-\Delta)^s u}{u} \Big|_{\Omega}.
\]
Here we use that $u$ can only vanish in a set of measure zero in $\Omega$, by Theorem \ref{thm_ucp_positive_measure} and the fact that $f \not\equiv 0$.
\end{enumerate}
We note that this reconstruction procedure is quite different from those for the standard Calder\'on problem (the case $s=1$), which are often based on complex geometrical optics solutions and boundary integral equations \cite{Nachman1988, Nachman1996}.

This paper is organized as follows. Section \ref{sec:introduction} is the introduction, and Section \ref{sec:aux} discusses function spaces, wellposedness and functional analysis results required for the proofs. Section \ref{sec:Aq} contains several constructive unique continuation results and in particular proves Theorem \ref{thm_ucp_constructive}. Section \ref{sec:recov} considers the inverse problem and proves Theorem \ref{thm_main}. Section \ref{sec:ucp} contains the unique continuation result from sets of positive measure, Theorem \ref{thm_ucp_positive_measure}, which is required to deal with $L^{\infty}$ coefficients in Theorem \ref{thm_main}. Section \ref{sec:stab} shows that logarithmic stability is optimal in Theorem \ref{thm_ucp_constructive}, and Appendix \ref{sec:append} proves a Carleman estimate required for Theorem \ref{thm_ucp_positive_measure}.

\subsection*{Acknowledgements}
M.S.\ was supported by the Academy of Finland (Centre of Excellence in Inverse Modelling and Imaging, grant numbers 312121 and 309963) and by the European Research Council under FP7/2007-2013 (ERC StG 307023) and Horizon 2020 (ERC CoG 770924). G.U.\ was partly supported by NSF and a Si-Yuan Professorship at IAS, HKUST.

\section{Auxiliary Results}
\label{sec:aux}

In this section, we recall a number of auxiliary results, which will be relevant in our reconstruction algorithm.

\subsection{Function spaces}
\label{sec:fs}

In the sequel, we will use several $L^2$ based Sobolev spaces. Here we follow the notation from \cite{RS17a}, \cite{GhoshSaloUhlmann} and \cite{McLean}. The whole space Sobolev spaces are denoted by
\begin{align*}
H^{s}(\R^n):=\{u \in \mathcal{S}'(\R^n): \|\langle D\rangle^{s} u \|_{L^2(\R^n)}<\infty\},
\end{align*}
where $\langle D\rangle^s u := \mathcal{F}^{-1} \{ (1+|\xi|^2)^{s/2} \F u \}$ and where $\F$ denotes the Fourier transform.

For spaces on open domains $U \subset \R^n$ we use the following notation:
\begin{align*}
H^{s}(U)&:= \{u|_{U}: \ u \in H^{s}(\R^n)\},\\
\widetilde{H}^s(U)&:=\mbox{closure of $C_c^{\infty}(U)$ in $H^{s}(\R^n)$},\\
H^{s}_{0}(U)&:=\mbox{closure of $C_c^{\infty}(U)$ in $H^{s}(U)$},\\
H^{s}_{\overline{U}}&:=\{ u \in H^{s}(\R^n): \supp(u) \subset \overline{U}\}.
\end{align*}
We remark that it always holds that (see for instance Theorem 3.3 in \cite{CWHM})
\begin{align*}
(H^{s}(U))^{\ast}= \widetilde{H}^{-s}(U), \ (\widetilde{H}^{s}(U))^{\ast}= H^{-s}(U), \ s \in \R.
\end{align*}
If in addition $U$ is a bounded Lipschitz domain, we also have that 
\begin{align*}
H^{s}_{\overline{U}} &= \widetilde{H}^s(U), \ s \in \R,\\
H^{s}_0(U) &= H^{s}_{\overline{U}}, \ s>-\frac{1}{2}, \ s \notin\{\frac{1}{2}, \frac{3}{2}, \cdots \},\\
H^{s}_0(U)&= H^s(U), \ s \leq \frac{1}{2}.
\end{align*}

\subsection{Well-posedness}
We recall the main well-posedness results for solutions to 
\begin{align}
\label{eq:eq_q}
\begin{split}
((-\D)^s + q)u & = 0 \mbox{ in } \Omega,\\
u & = f \mbox{ in } \Omega_e.
\end{split}
\end{align}
Here and in the remainder of the article, we always implicitly work under the assumption \eqref{dirichlet_uniqueness}. As the well-posedness of \eqref{eq:eq_q} was discussed in detail in \cite{GhoshSaloUhlmann}, we omit the proofs in the sequel and only state the main results.

We first recall the well-posedness in the energy space:

\begin{lem}[Lemma 2.3 in \cite{GhoshSaloUhlmann}]
\label{lem:well_posed}
Let $n\geq 1$ and $s\in (0,1)$.
Let $\Omega \subset \R^n$ be a bounded open set. Assume further that $q\in L^{\infty}(\Omega)$ is such that \eqref{dirichlet_uniqueness} is satisfied.
Let
\begin{align*}
B_q(u,w):=((-\D)^{s/2}u, (-\D)^{s/2} w)_{L^2(\R^n)} + (q u|_{\Omega}, w|_{\Omega})_{L^2(\Omega)}, \quad u,w \in H^{s}(\R^n).
\end{align*}
Then, for any $f\in H^s(\R^n)$ the problem \eqref{eq:eq_q} is well-posed in the sense that there exists a unique solution $u \in H^{s}(\R^n)$ with
\begin{align*}
B_q(u,w) = 0 \mbox{ for all } w \in \widetilde{H}^s(\Omega),
\end{align*}
and $u-f \in \widetilde{H}^s(\Omega)$.
Moreover, there exists a constant $C>0$ depending on $n,s,\Omega,q$ such that
\begin{align*}
\|u\|_{H^{s}(\R^n)} \leq C \|f\|_{H^{s}(\R^n)}.
\end{align*}
\end{lem}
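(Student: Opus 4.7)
The plan is to recast the problem as a standard variational question on $\widetilde{H}^s(\Omega)$ and then apply the Fredholm alternative, using the non-eigenvalue assumption \eqref{dirichlet_uniqueness} to eliminate the kernel.

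First, I would reduce to homogeneous exterior data by writing $u = f + v$ with the unknown $v \in \widetilde{H}^s(\Omega)$. The equation $B_q(u,w) = 0$ for all $w \in \widetilde{H}^s(\Omega)$ then becomes
\begin{equation*}
B_q(v,w) = -B_q(f,w) =: F(w), \qquad w \in \widetilde{H}^s(\Omega).
\end{equation*}
Since $|B_q(f,w)| \le \|(-\Delta)^{s/2}f\|_{L^2(\R^n)}\|(-\Delta)^{s/2}w\|_{L^2(\R^n)} + \|q\|_{L^\infty}\|f\|_{L^2(\Omega)}\|w\|_{L^2(\Omega)}$, the functional $F$ lies in $(\widetilde{H}^s(\Omega))^* = H^{-s}(\Omega)$ with $\|F\|_{H^{-s}(\Omega)} \le C\|f\|_{H^s(\R^n)}$.

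Next I would analyze $B_q$ on $\widetilde{H}^s(\Omega) \times \widetilde{H}^s(\Omega)$. Boundedness is immediate. For the principal part $B_0(v,w) = ((-\Delta)^{s/2}v,(-\Delta)^{s/2}w)_{L^2(\R^n)}$, I would invoke the fractional Poincaré inequality on functions supported in the bounded set $\overline{\Omega}$, which gives $\|v\|_{L^2(\R^n)} \le C\|(-\Delta)^{s/2}v\|_{L^2(\R^n)}$ for $v \in \widetilde{H}^s(\Omega)$, hence the norms $\|v\|_{H^s(\R^n)}$ and $\|(-\Delta)^{s/2}v\|_{L^2(\R^n)}$ are equivalent on $\widetilde{H}^s(\Omega)$. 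Thus $B_0$ is coercive on $\widetilde{H}^s(\Omega)$. The zeroth order part $(qv,w)_{L^2(\Omega)}$ defines a compact perturbation via the Riesz isomorphism because the embedding $\widetilde{H}^s(\Omega) \hookrightarrow L^2(\Omega)$ is compact (Rellich; $\Omega$ bounded).

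The Fredholm alternative then tells me that existence for every right-hand side $F \in H^{-s}(\Omega)$ is equivalent to uniqueness of the homogeneous problem, i.e.\ that the only $v \in \widetilde{H}^s(\Omega)$ with $B_q(v,w) = 0$ for all $w \in \widetilde{H}^s(\Omega)$ is $v \equiv 0$. But such a $v$ is exactly a function in $H^s(\R^n)$ supported in $\overline{\Omega}$ solving $((-\Delta)^s+q)v = 0$ in $\Omega$ with $v|_{\Omega_e} = 0$, so \eqref{dirichlet_uniqueness} forces $v \equiv 0$. Consequently the map $T: \widetilde{H}^s(\Omega) \to H^{-s}(\Omega)$ defined by $\langle Tv, w\rangle = B_q(v,w)$ is a bijection, and by the open mapping theorem $\|v\|_{H^s(\R^n)} \le C\|F\|_{H^{-s}(\Omega)} \le C\|f\|_{H^s(\R^n)}$. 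Setting $u = f+v$ yields the solution and the estimate $\|u\|_{H^s(\R^n)} \le C\|f\|_{H^s(\R^n)}$.

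The only nontrivial step is the Poincaré-type inequality guaranteeing coercivity of $B_0$ on $\widetilde{H}^s(\Omega)$; this is where boundedness of $\Omega$ is really used, and it is the place where one must be slightly careful with the definition of the homogeneous norm $\|(-\Delta)^{s/2}\cdot\|_{L^2}$ versus the full $H^s$ norm. Everything else (the Fredholm argument and the use of the non-eigenvalue hypothesis) is routine.
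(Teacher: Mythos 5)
Your proof is correct, and it is the same argument as the one the paper refers to (Lemma 2.3 of [GSU16]): reduce to a homogeneous problem on $\widetilde{H}^s(\Omega)$, use the fractional Poincar\'e inequality to make the principal part coercive, treat the potential term as a compact perturbation via Rellich, and invoke the Fredholm alternative together with assumption \eqref{dirichlet_uniqueness} to get bijectivity and the a priori estimate. No gaps; the only point worth keeping in mind, which you already flagged, is the equivalence $\|v\|_{H^s(\R^n)}^2 \sim \|v\|_{L^2(\R^n)}^2 + \|(-\Delta)^{s/2}v\|_{L^2(\R^n)}^2$ needed to pass between the full norm and the homogeneous seminorm when applying Poincar\'e and Rellich.
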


In particular, the well-posedness result of Lemma \ref{lem:well_posed} allows us to define the Poisson operator 
\begin{align*}
P_q : \widetilde{H}^{s}(\Omega_e) \rightarrow H^{s}(\R^n), \ f \mapsto u,
\end{align*}
where $u$ is the unique solution to \eqref{eq:eq_q}.

With the bilinear form $B_q(u,v)$ at hand, it is possible to precisely define the Dirichlet-to-Neumann map associated with the fractional Calder\'on problem (cf.\ Lemma 2.4 in \cite{GhoshSaloUhlmann}). To this end, let $[f],[g] \in H^{s}(\R^n)/ \widetilde{H}^{s}(\Omega)=:X$. If $\Omega$ is a Lipschitz domain, the quotient space $X$ can be identified with $H^{s}(\Omega_e)$. Due to this, we will simply write $f$ instead of $[f]$.
The (weak) Dirichlet-to-Neumann map associated with \eqref{eq:eq_q} could be defined as
\begin{align}
\label{eq:DN}
\begin{split}
&\widetilde{\Lambda}_q: X \rightarrow X^{\ast}, \  \langle \widetilde{\Lambda}_q [f],[g] \rangle_{X\rightarrow X^{\ast}} =
B_q(u, g),
\end{split}
\end{align}
where $u$ is a solution to \eqref{eq:eq_q} with data $f$, $B_q(\cdot, \cdot)$ denotes the bilinear form from Lemma \ref{lem:well_posed} and where $\langle \cdot, \cdot \rangle_{X \rightarrow X^{\ast}} $ denotes the duality pairing between the respective spaces.

We will also consider the pointwise Dirichlet-to-Neumann map 
\[
\Lambda_q: \widetilde{H}^s(\Omega_e) \to H^{-s}(\Omega_e), \ \ f \mapsto (-\Delta)^s u|_{\Omega_e}.
\]
This is well defined for any bounded open set $\Omega \subset \R^n$ and any $q \in L^{\infty}(\Omega)$ if \eqref{dirichlet_uniqueness} holds. It was proved in \cite[Lemma 3.1]{GhoshSaloUhlmann} that, if one assumes more regularity for $\Omega$, $q$, and $f$, one has 
\[
\widetilde{\Lambda}_q f = \Lambda_q f.
\]
In this article we will use the pointwise Dirichlet-to-Neumann map $\Lambda_q$, since it directly leads to a reconstruction procedure from a single measurement.

\subsection{Relating the Poisson operator and the Dirichlet-to-Neumann map}

Our reconstruction procedure for the inverse problem boils down to determining a solution $u = P_q f$ in $\R^n$ from the knowledge of $f$ in $\Omega_e$ and $\Lambda_q f|_W$ for some open $W \subset \Omega_e$. Thus, we wish to determine $P_q f$ from $\Lambda_q f|_W$. Since $\Lambda_q f|_W = (-\Delta)^s u|_W$, the problem reduces to determining $u$ in $\R^n$ from the knowledge of $u$ in $\Omega_e$ and $(-\Delta)^s u$ in $W$. Writing $u = f + v$, it is sufficient to determine a function $v \in \widetilde{H}^s(\Omega)$ from the knowledge of $(-\Delta)^s v|_W$. In other words, we need to determine $v$ from $Lv$, where $L$ is the operator introduced in the following lemma.

\begin{lem}
\label{lem:Aq_Lq1}
Let $n \geq 1$ and $s \in (0,1)$. Let $\Omega \subset \R^n$ be a bounded open set, and let $W \subset \R^n$ be an open set with $\overline{\Omega}\cap \overline{W}= \emptyset$. Consider the operator 
\begin{align}
\label{eq:L}
L: \widetilde{H}^{s}(\Omega) \rightarrow H^{-s}(W),\ \ 
v  \mapsto (-\D)^s v|_{W}.
\end{align}
Then $L$ is a compact, injective operator with dense range. 
In particular, there exist orthonormal bases $\{\varphi_j\}_{j = 1}^{\infty}$ of $H^{-s}(W)$ and $\{\psi_j\}_{j = 1}^{\infty}$ of $\widetilde{H}^{s}(\Omega)$ (with respect to the Hilbert space structure of these spaces, see \cite{CWHM}) and singular values $\sigma_j>0$ such that
\begin{align}
\label{eq:L_spec}
L \psi_j = \sigma_j \varphi_j, \ L^{\ast} \varphi_j = \sigma_j \psi_j.
\end{align}
\end{lem}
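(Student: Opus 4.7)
The key observation that drives everything is that, under the hypothesis $\overline{\Omega}\cap\overline{W}=\emptyset$, the operator $L$ is represented by a smooth integral kernel. Concretely, for $v\in\widetilde{H}^s(\Omega)$, since $\supp(v)\subset\overline{\Omega}$ and $x\in W$ is separated from $\overline{\Omega}$, one has $v(x)=0$ and the singular integral definition of the fractional Laplacian collapses to
\[
Lv(x) \;=\; -c_{n,s}\int_{\Omega}\frac{v(y)}{|x-y|^{n+2s}}\,dy, \qquad x\in W,
\]
with kernel $K(x,y)=-c_{n,s}|x-y|^{-n-2s}$. Because $\Omega$ is bounded, $\dist(W,\Omega)>0$, and $|x-y|^{-n-2s}$ has polynomial decay at infinity in $x$ of order $>n/2$, one checks that $K\in L^2(W\times\Omega)$, so the associated integral operator is Hilbert--Schmidt as a map $L^2(\Omega)\to L^2(W)$.

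\textbf{Compactness.} I factor $L$ as
\[
\widetilde{H}^s(\Omega)\;\hookrightarrow\;L^2(\Omega)\;\xrightarrow{\,K\,}\;L^2(W)\;\hookrightarrow\;H^{-s}(W),
\]
where the first inclusion is continuous because $H^s(\R^n)\hookrightarrow L^2(\R^n)$ and $v$ is supported in $\overline{\Omega}$, the middle map is compact by the Hilbert--Schmidt property above, and the last embedding is continuous (indeed $L^2(W)\subset\widetilde{H}^{-s}(W)\subset H^{-s}(W)$ using duality of the spaces recalled in Section~\ref{sec:fs}). Composition of a compact operator with bounded operators gives compactness of $L$.

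\textbf{Injectivity.} Suppose $Lv=0$ for some $v\in\widetilde{H}^s(\Omega)$. Then $v\in H^s(\R^n)$ satisfies $v|_W=0$ (because $\supp(v)\subset\overline{\Omega}$ and $\overline{W}\cap\overline{\Omega}=\emptyset$) and $(-\Delta)^s v|_W=0$. The strong unique continuation principle for the fractional Laplacian, in the form already used in \cite{GhoshSaloUhlmann}, then forces $v\equiv 0$ on $\R^n$. For density of the range, I pass to the adjoint: since $(H^{-s}(W))^*=\widetilde{H}^s(W)$ and $(\widetilde{H}^s(\Omega))^*=H^{-s}(\Omega)$, the adjoint is the operator
\[
L^{\ast}:\widetilde{H}^s(W)\to H^{-s}(\Omega),\qquad L^{\ast}\varphi=(-\Delta)^s\varphi|_{\Omega},
\]
which one obtains by unfolding the duality pairing and using that $\varphi\in\widetilde{H}^s(W)$ extends by zero to $H^s(\R^n)$ so that $\langle Lv,\varphi\rangle=((-\Delta)^{s/2}v,(-\Delta)^{s/2}\varphi)_{L^2(\R^n)}=\langle v,(-\Delta)^s\varphi\rangle$. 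This $L^{\ast}$ is structurally identical to $L$ with the roles of $W$ and $\Omega$ swapped, so the same unique continuation argument gives $L^{\ast}$ injective, and hence $\overline{\mathrm{Range}(L)}=H^{-s}(W)$.

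\textbf{Singular value decomposition.} Once compactness, injectivity and dense range are known, the existence of the singular system $(\sigma_j,\psi_j,\varphi_j)$ with the stated properties is a direct application of the spectral theorem for compact self-adjoint nonnegative operators applied to $L^{\ast}L$ on $\widetilde{H}^s(\Omega)$: the eigenvalues $\sigma_j^2>0$ (positivity follows from injectivity of $L$) give an orthonormal basis $\{\psi_j\}$, and setting $\varphi_j:=\sigma_j^{-1}L\psi_j$ yields the orthonormal system in $H^{-s}(W)$ satisfying \eqref{eq:L_spec}; completeness of $\{\varphi_j\}$ follows from dense range of $L$. I do not expect a genuine obstacle here; the only substantive input beyond routine functional analysis is the fractional UCP, which is already available in the paper's framework.
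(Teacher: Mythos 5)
Your proof is correct, and the structure mirrors the paper's except in the compactness step, where you take a genuinely different and in fact quantitatively stronger route. The paper proves compactness qualitatively: writing $Lv = r_W\chi_1(-\Delta)^s\chi_2 v$ with disjointly supported cutoffs $\chi_1,\chi_2$ and invoking the pseudolocality of $(-\Delta)^s$. You instead observe that on $W\times\Omega$ the kernel $|x-y|^{-n-2s}$ is bounded and square-integrable (boundedness of $\Omega$, positive distance to $W$, and polynomial decay at infinity), so $L$ factors as the composition of the embedding $\widetilde H^s(\Omega)\hookrightarrow L^2(\Omega)$, a Hilbert--Schmidt integral operator $L^2(\Omega)\to L^2(W)$, and the embedding $L^2(W)\hookrightarrow H^{-s}(W)$. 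This buys slightly more than the paper proves here (Hilbert--Schmidt rather than merely compact), which is a cleaner quantitative grip on the singular value decay and connects naturally to the exponential instability established in Section~\ref{sec:stab}. Your injectivity argument (fractional UCP on $W$) is the same as the paper's. For density of the range the paper runs a direct Hahn--Banach argument, while you phrase the same computation as an identification of the Banach-space adjoint $L'\colon\widetilde H^s(W)\to H^{-s}(\Omega)$, $L'\varphi=(-\Delta)^s\varphi|_\Omega$, followed by $\overline{R(L)}=(\ker L')^\perp$ and UCP applied on $\Omega$; the underlying manipulation $\langle Lv,\varphi\rangle = \langle v,(-\Delta)^s\varphi\rangle$ is identical to the paper's. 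One small cosmetic caveat: you introduce $L^*$ as the Banach-space adjoint $\widetilde H^s(W)\to H^{-s}(\Omega)$ but then form $L^*L$ on $\widetilde H^s(\Omega)$ in the last step, which requires the Hilbert-space adjoint $L^*\colon H^{-s}(W)\to\widetilde H^s(\Omega)$ obtained from $L'$ by composing with the Riesz isometries; this is standard and does not create a real gap, but it is worth making the distinction explicit since \eqref{eq:L_spec} is a statement about the Hilbert-space adjoint.
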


\begin{proof}
Let $\chi_1, \chi_2 \in C^{\infty}_c(\R^n)$ satisfy $\chi_2 = 1$ near $\overline{\Omega}$, $\chi_1 = 1$ near $\overline{W}$ and $\chi_1 = 0$ near $\mathrm{supp}(\chi_2)$. Then 
\[
Lv = r_W \chi_1 (-\Delta)^s \chi_2 v, \qquad v \in \widetilde{H}^s(\Omega),
\]
where $r_{W}$ denotes the restriction to $W$. Next we claim that 
\begin{equation} \label{pseudolocality}
\text{$\chi_1 (-\Delta)^s \chi_2$ is bounded $H^s(\R^n) \to L^2(\R^n)$.}
\end{equation}
From \eqref{pseudolocality} and the compact Sobolev embedding $L^2(B) \subset H^{-s}(B)$ where $B$ is a ball containing $\supp(\chi_1)$, we see that $L: \widetilde{H}^{s}(\Omega) \rightarrow H^{-s}(W)$ is compact. To prove \eqref{pseudolocality} we split $(-\Delta)^s = A_1 + A_2$ where $A_1$ corresponds to the Fourier multiplier $\chi(\xi)|\xi|^{2s}$ and $A_2$ corresponds $(1-\chi(\xi))|\xi|^{2s}$, where $\chi(\xi)$ is supported in $B_1(0)$ and is identically equal to one in $B_{1/2}(0)$. Then $\chi_1 A_1 \chi_2$ is bounded $H^s(\R^n) \to L^2(\R^n)$  since $A_1$ is bounded on $L^2(\R^n)$. Moreover, $A_2$ is a standard pseudodifferential operator, and since $\chi_1$ and $\chi_2$ have disjoint supports the integral kernel of $\chi_1 A_2 \chi_2$ is $C^{\infty}$ (this is the pseudolocal property) and hence this operator is bounded $H^s(\R^n) \to H^t(\R^n)$ for any $t > 0$. This shows \eqref{pseudolocality}.

We have proved that $L$ is compact. Also, $L$ is injective by the weak unique continuation property for the fractional Laplacian \cite[Theorem 1.2]{GhoshSaloUhlmann}. By the Hahn-Banach theorem, to prove the density of the range of $L$ in $H^{-s}(W)$, it suffices to show that the only function $f\in \widetilde{H}^{s}(W) = (H^{-s}(W))^*$ which satisfies
\begin{align*}
(L v, f) = 0 \mbox{ for all } v \in \widetilde{H}^s(\Omega),
\end{align*}
is the zero function. To observe this, note that for any $v \in C^{\infty}_c(\Omega)$, the definition of the duality between $H^{-s}(W)$ and $\widetilde{H}^s(W)$ gives 
\begin{align*}
0 = (Lv, f) = ((-\D)^s v, f)_{\R^n} = (v, (-\Delta)^s f)_{\R^n}.
\end{align*}
Since this is true for all $v \in C^{\infty}_c(\Omega)$, it follows that $(-\Delta)^s f|_{\Omega} = 0$. But also $f|_{\Omega} = 0$, and using again \cite[Theorem 1.2]{GhoshSaloUhlmann} yields that $f \equiv 0$. This concludes the proof of the density result. The rest of the statements follow from the spectral theorem for compact operators.
\end{proof}

We remark that the compactness of $L$ indicates that the recovery of $P_q f$ from $\Lambda_q f|_W$ by inverting the relation $L$ from Lemma \ref{lem:Aq_Lq1} is necessarily ill-posed (cf. Section \ref{sec:stab} for more on the stability properties).

\subsection{Equivalence of Runge approximation and weak unique continuation}

Last but not least, we show that the approximation property and the (weak) unique continuation property used in \cite{GhoshSaloUhlmann} are in fact equivalent. A quantitative version of this equivalence was presented in Lemma 3.3 in \cite{RS17a}. For elliptic second order operators, this equivalence was already proved by Lax \cite{Lax}.

\begin{prop}
Let $\Omega \subset \R^n$, $n\geq 1$, be a bounded open set, and assume that $W\subset \Omega_e$ is open. Let $s\in (0,1)$ and let $q \in L^{\infty}(\Omega)$ satisfy \eqref{dirichlet_uniqueness}. Then the following statements are equivalent:
\begin{itemize}
\item[(i)] For every $\epsilon>0$ and every $v \in L^2(\Omega)$ there exists $f\in \widetilde{H}^{s}(W)$ such that
\begin{align*}
\|v-P_q f\|_{L^2(\Omega)} \leq \epsilon.
\end{align*}
\item[(ii)] Let $v\in L^2(\Omega)$ and assume that $w \in \widetilde{H}^{s}(\Omega)$ is a solution to 
\begin{align}
\label{eq:dual_a}
\begin{split}
((-\D)^s + q) w & = v \mbox{ in } \Omega,\\
w & = 0 \mbox{ in } \Omega_e.
\end{split}
\end{align}
Assume that $(-\D)^s w = 0$ in $W$. Then, $v \equiv 0$ and $w\equiv 0$.
\end{itemize}
\end{prop}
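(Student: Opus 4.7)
The statement is a standard Hahn--Banach/duality equivalence, and the natural approach is to set up an adjoint identity linking the pairing of $v$ with a Runge solution $u = P_q f$ to the pairing of $(-\Delta)^s w|_W$ with $f$. Concretely, for any $f \in \widetilde{H}^s(W)$ the solution $u = P_q f$ splits as $u = f + u_0$ with $u_0 \in \widetilde{H}^s(\Omega)$ by Lemma \ref{lem:well_posed}, and since $\overline{W}\cap\overline{\Omega}=\emptyset$ we have $f|_\Omega = 0$. I will establish the key identity
\[
(v, u)_{L^2(\Omega)} \;=\; -\langle (-\Delta)^s w|_W,\, f\rangle_{H^{-s}(W),\,\widetilde{H}^s(W)}
\]
by writing $(v,u)_{L^2(\Omega)} = (v, u_0)_{L^2(\Omega)} = B_q(w,u_0) = B_q(w,u) - B_q(w,f)$, noting that $B_q(w,u)=0$ because $u$ solves \eqref{eq:eq_q} with the admissible test function $w \in \widetilde{H}^s(\Omega)$, and evaluating $B_q(w,f)$: the potential term drops since $f|_\Omega = 0$, while the quadratic form part is just $((-\Delta)^{s/2}w,(-\Delta)^{s/2}f)_{L^2(\R^n)} = \langle (-\Delta)^s w, f\rangle$, which by $\supp f \subset \overline{W}$ reduces to the claimed duality pairing on $W$. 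Pseudolocality of $(-\Delta)^s$, plus $\supp(w) \subset \overline{\Omega}$, ensures $(-\Delta)^s w|_W$ is a well-defined element of $H^{-s}(W)$.

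With this identity in hand both directions are immediate. For (i) $\Rightarrow$ (ii), assume $w \in \widetilde{H}^s(\Omega)$ solves \eqref{eq:dual_a} with $(-\Delta)^s w|_W = 0$. The identity forces $(v, P_q f)_{L^2(\Omega)} = 0$ for every $f \in \widetilde{H}^s(W)$, and the density of $\{P_q f|_\Omega : f \in \widetilde{H}^s(W)\}$ in $L^2(\Omega)$ from (i) yields $v \equiv 0$. Then $w$ solves the homogeneous Dirichlet problem $((-\Delta)^s+q)w=0$ in $\Omega$ with $w|_{\Omega_e}=0$, so $w \equiv 0$ by the standing assumption \eqref{dirichlet_uniqueness}.

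For (ii) $\Rightarrow$ (i), I argue by contradiction: if the Runge set is not dense in $L^2(\Omega)$, Hahn--Banach supplies a nonzero $v \in L^2(\Omega)$ annihilating every $P_q f|_\Omega$. Well-posedness of the adjoint equation \eqref{eq:dual_a} needs to be invoked here; it follows from a standard Fredholm-alternative argument, since \eqref{dirichlet_uniqueness} (applied to the formally self-adjoint operator $(-\Delta)^s + q$) is exactly the injectivity needed to produce a unique $w \in \widetilde{H}^s(\Omega)$ solving $((-\Delta)^s+q)w = v$ with vanishing exterior data. Plugging this $w$ into the identity gives $\langle (-\Delta)^s w|_W, f\rangle = 0$ for all $f \in \widetilde{H}^s(W)$, so $(-\Delta)^s w|_W = 0$; but then (ii) forces $v \equiv 0$, a contradiction.

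The only mildly delicate point is verifying the identity cleanly: one has to keep track of which terms in $B_q(w,\cdot)$ survive, use that $f$ and $w$ have disjoint supports so that the $q$-term drops, and justify interpreting the fractional Laplacian pairing as a duality on $W$ via pseudolocality. Once this is done, the equivalence is a routine Hahn--Banach argument, together with the Fredholm existence for the adjoint problem implied by \eqref{dirichlet_uniqueness}.
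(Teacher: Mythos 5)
Your proof is correct and follows essentially the same route as the paper: both directions reduce to Hahn--Banach duality together with the adjoint identity $(v, P_q f)_{L^2(\Omega)} = -\langle (-\Delta)^s w|_W, f\rangle$, which the paper derives inside each direction and you extract once as a standalone lemma before using it twice. Your modular presentation (and your explicit comment on the Fredholm well-posedness of \eqref{eq:dual_a}) is a mild stylistic improvement, but the underlying argument is the same; note only that the hypothesis is $W\subset\Omega_e$ rather than $\overline{W}\cap\overline{\Omega}=\emptyset$, which still gives $f|_\Omega=0$ for $f\in\widetilde{H}^s(W)$ but is a slightly weaker assumption than what you quoted.
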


For completeness, we briefly recall the short proof of this.

\begin{proof}
The implication (ii) $\Rightarrow$ (i) follows from the Hahn-Banach theorem as explained in \cite{GhoshSaloUhlmann}. Indeed, (i) is equivalent to the density of $\{ P_q f|_{\Omega} \,;\, f\in \widetilde{H}^s(W) \}$ in $L^2(\Omega)$. Assume that a function $v_0\in L^2(\Omega)$ satisfies 
\begin{align*}
(v_0, P_q f)_{\Omega}= 0 \mbox{ for all } f\in \widetilde{H}^{s}(W).
\end{align*}
Defining $w$ to be a solution of \eqref{eq:dual_a} for $v_0$ then yields (after using the equation for $P_q f$)
\begin{align*}
0 &=(v_0, P_q f)_{\Omega} = (((-\D)^s+q)w, P_q f - f)_{\Omega} = (((-\D)^s+q)w,P_q f - f)_{\R^n} \\ 
 &= -((-\D)^s w, f)_{\R^n} \mbox{ for all } f \in \widetilde{H}^s(W).
\end{align*}
In particular, $(-\D)^s w = 0$ in $W$. Assuming the validity of (ii) hence entails that $v_0\equiv 0$ and $w\equiv 0$, which yields the desired density result.

The opposite implication (i) $\Rightarrow$ (ii) is a consequence of an argument which is similar to the one for Lemma 3.3 in \cite{RS17a}. Let $v\in L^2(\Omega)$ be such that for the solution $w\in \widetilde{H}^{s}(\Omega)$ of \eqref{eq:dual_a} we have $(-\D)^s w|_{W}=0$. Assume that the approximation property from (i) holds. We seek to show that then $v\equiv 0$ and hence $w\equiv 0$. Using the approximation property, we have that for any $\psi \in L^2(\Omega)$ and any $\epsilon>0$ there exists $f\in \widetilde{H}^s(W)$ such that $\|\psi- P_q f\|_{L^2(\Omega)}\leq \epsilon$. Thus, using the equations for $w$ and $P_q f$ as in the first part of this proof and the assumption that $(-\D)^s w|_{W}=0$, we infer that
\begin{align*}
(v,\psi)_{\Omega} & = (v, \psi - P_q f)_{\Omega} + (v, P_q f)_{\Omega}\\
& = (v, \psi - P_q f)_{\Omega} - ((-\D)^s w, f)_{W}
 = (v, \psi - P_q f)_{\Omega}.
\end{align*}
Thus, using the approximation property for any $\psi \in L^2(\Omega)$ and any $\epsilon>0$ we obtain
\begin{align*}
|(v,\psi)_{\Omega}| 
\leq \|v\|_{L^2(\Omega)} \|\psi- P_q f\|_{L^2(\Omega)}
\leq \epsilon \|v\|_{L^2(\Omega)}.
\end{align*}
Letting $\epsilon \rightarrow 0$, we in particular obtain $(v,\psi)_{\Omega}=0$ for all $\psi \in L^2(\Omega)$. Hence $v=0$, which by well-posedness of the equation \eqref{eq:dual_a} also implies that $w\equiv 0$. This concludes the proof.
\end{proof}

\section{Constructive Unique Continuation Results}
\label{sec:Aq}

Seeking to follow the recovery strategy outlined in steps (1)--(4) in the introduction, we here deal with constructive unique continuation results which are needed for step (2). As the operator $L$ from \eqref{eq:L} is compact, this is an ill-posed problem and hence requires regularization arguments. In the sequel, we discuss three such possible recovery procedures: First, we rely on the spectral properties of the operator $L$ from Lemma \ref{lem:Aq_Lq1} and apply a suitable spectral regularization scheme. Next, in Section \ref{sec:Tik}, we rely on Tikhonov regularization and hence prove Theorem \ref{thm_ucp_constructive}. Finally, in Section \ref{sec:minimal} we use a variational argument as in \cite{RS17a} and \cite{FernandezCaraZuazua}, which yields the minimal $L^2$ norm regularization. If the data are exactly of the form $(-\D)^s u|_{W}$ for some function $u\in \widetilde{H}^s(\Omega)$, all these schemes recover $u$ exactly (a little care is needed for this in the minimal $L^2$ norm regularization). However, we remark that in view of the stability results from \cite{RS17a}, the stability for these recovery schemes is at best logarithmic, which renders them very unstable (cf.\ Section \ref{sec:stab}). We will not discuss here the choice of the regularization parameter or computational implementations.

\subsection{Spectral regularization}
\label{sec:spec_reg}

We begin by discussing the spectral regularization, which is based on the mapping properties of $L, L^{\ast}$ outlined in Lemma \ref{lem:Aq_Lq1}. Further properties of spectral regularization can for instance be found in Chapter 4 in \cite{CK} (cf.\ also Section 3.3 in \cite{RS17a}).

\begin{lem}
\label{lem:spec_real}
Let $n\geq 1$ and $s\in(0,1)$.
Let $\Omega \subset \R^n$ be a bounded open set and assume that $W \subset \R^n$ is open with $\overline{\Omega}\cap \overline{W}= \emptyset$.
Let $L, L^{\ast}$ as well as $\{\psi_k\}_{k=1}^{\infty} \subset \widetilde{H}^s(\Omega)$, $\{\varphi_k\}_{k=1}^{\infty} \subset H^{-s}(W)$, $\sigma_k>0$ be as in \eqref{eq:L_spec} and let $h \in H^{-s}(W)$. 
Then, the following approximation results hold:
\begin{itemize}
\item[(i)] The function
\begin{align*}
v_{\alpha}:=R_{\alpha}(h):=\sum\limits_{\sigma_k \geq \alpha} \frac{1}{\sigma_k} (h, \varphi_k)_{H^{-s}(W)} \psi_k \in \widetilde{H}^s(\Omega)
\end{align*} 
satisfies $L v_{\alpha} \rightarrow h$ in $H^{-s}(W)$ as $\alpha \to 0$. 
\item[(ii)] If $h= L v$ for some $v \in \widetilde{H}^s(\Omega)$, we further have 
$R_{\alpha}(h)=v_{\alpha} \rightarrow v$ in $\widetilde{H}^s(\Omega)$.
\end{itemize} 
\end{lem}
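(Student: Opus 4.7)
The plan is to recognize that $R_\alpha$ is the standard truncated singular value (spectral cutoff) regularization of the compact operator $L$, and that both claimed convergences reduce to Parseval's identity applied in the orthonormal bases $\{\psi_k\}$ and $\{\varphi_k\}$ from Lemma \ref{lem:Aq_Lq1}. A preliminary observation to record is that since $L$ is compact one has $\sigma_k \to 0$, so for each $\alpha > 0$ the index set $I_\alpha := \{k \in \mathbb{N} : \sigma_k \geq \alpha\}$ is finite; and since $\sigma_k > 0$ for every $k$, each fixed index is contained in $I_\alpha$ for all sufficiently small $\alpha$, i.e.\ $I_\alpha \uparrow \mathbb{N}$ as $\alpha \to 0^+$. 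In particular $v_\alpha$ is a well-defined finite sum in $\widetilde{H}^s(\Omega)$.

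For (i), the plan is to apply $L$ termwise using $L\psi_k = \sigma_k \varphi_k$, which cancels the $1/\sigma_k$ factor and yields
\[
Lv_\alpha = \sum_{k \in I_\alpha} (h,\varphi_k)_{H^{-s}(W)}\,\varphi_k.
\]
This is precisely the partial Fourier sum of $h$ with respect to the orthonormal basis $\{\varphi_k\}$ of $H^{-s}(W)$, so Parseval's identity gives
\[
\| h - Lv_\alpha \|_{H^{-s}(W)}^2 = \sum_{k \notin I_\alpha} \bigl|(h,\varphi_k)_{H^{-s}(W)}\bigr|^2,
\]
and the right hand side tends to $0$ as $\alpha \to 0$ because $I_\alpha \uparrow \mathbb{N}$.

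For (ii), I would first rewrite the Fourier coefficients of $h = Lv$ by using the adjoint relation $L^*\varphi_k = \sigma_k \psi_k$ together with the duality pairing between $H^{-s}(W)$ and $\widetilde{H}^s(W)^*$:
\[
(h,\varphi_k)_{H^{-s}(W)} = (Lv,\varphi_k)_{H^{-s}(W)} = (v, L^*\varphi_k)_{\widetilde{H}^s(\Omega)} = \sigma_k\,(v,\psi_k)_{\widetilde{H}^s(\Omega)}.
\]
Substituting into the defining expansion of $v_\alpha$ the factors $\sigma_k$ cancel and one obtains
\[
v_\alpha = \sum_{k \in I_\alpha} (v,\psi_k)_{\widetilde{H}^s(\Omega)}\,\psi_k,
\]
which is the partial Fourier sum of $v$ in the orthonormal basis $\{\psi_k\}$ of $\widetilde{H}^s(\Omega)$; another application of Parseval then gives $v_\alpha \to v$ in $\widetilde{H}^s(\Omega)$.

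I do not anticipate a significant obstacle: the statement is essentially the Picard criterion for inversion of a compact operator via truncated SVD, and the entire proof reduces to verifying the two algebraic identities above and invoking Parseval. The one point that deserves explicit mention is that $I_\alpha \uparrow \mathbb{N}$ (so that no a priori information on summability of $(h,\varphi_k)_{H^{-s}(W)}$ beyond $h \in H^{-s}(W)$ is needed for (i), and no source condition beyond $v \in \widetilde{H}^s(\Omega)$ is needed for (ii)); this follows at once from compactness of $L$ and positivity of the singular values established in Lemma \ref{lem:Aq_Lq1}.
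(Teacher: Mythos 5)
Your proof is correct and takes essentially the same route as the paper: part (ii) is the identical computation using the Hilbert-space adjoint relation $(Lv,\varphi_k)_{H^{-s}(W)} = (v, L^{\ast}\varphi_k)_{\widetilde{H}^s(\Omega)}$, and your part (i) simply spells out the Parseval/ONB argument that the paper compresses into ``follows from the density and mapping properties from Lemma~\ref{lem:Aq_Lq1}.'' One small slip in wording: the pairing you invoke for $L^{\ast}$ is the Hilbert-space inner product on $H^{-s}(W)$ (so $L^{\ast}$ is the Hilbert-space adjoint), not the $H^{-s}(W)$--$\widetilde{H}^s(W)$ duality pairing, but the computation you carry out is the correct one.
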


\begin{proof}
The claim in (i) directly follows from the density and the mapping properties from Lemma \ref{lem:Aq_Lq1}. 
In order to deduce the second property, we use the (Hilbert space) duality between $L, L^{\ast}$: If $h = L v$, then
\begin{align*}
v_{\alpha} 
&= \sum\limits_{\sigma_k \geq \alpha} \frac{1}{\sigma_k} (h, \varphi_k)_{H^{-s}(W)} \psi_k
= \sum\limits_{\sigma_k \geq \alpha} \frac{1}{\sigma_k} (Lv, \varphi_k)_{H^{-s}(W)} \psi_k\\
&= \sum\limits_{\sigma_k \geq \alpha} \frac{1}{\sigma_k} (v, L^{\ast} \varphi_k)_{\widetilde{H}^s(\Omega)} \psi_k 
= \sum\limits_{\sigma_k \geq \alpha}(v, \psi_k)_{\widetilde{H}^s(\Omega)} \psi_k \\
&\rightarrow
 \sum\limits_{k \geq 1}  (v, \psi_k)_{\widetilde{H}^s(\Omega)}  \psi_k = v \mbox{ in } \widetilde{H}^s(\Omega).
\end{align*}
This concludes the argument.
\end{proof}

\begin{rmk}
\label{rmk:compute} 
We remark that the spectral regularization scheme outlined in Lemma \ref{lem:spec_real} requires the knowledge of $\sigma_k, \psi_k, \varphi_k$. These however can be computed from the eigenvalues and (generalized) eigenfunctions of the (known) operators $L^{\ast} L$ and  $L L^{\ast}$.
\end{rmk}

\subsection{Tikhonov regularization}
\label{sec:Tik}

As a second regularization procedure with possibly less computational effort (it is for instance not needed to first compute the singular value decomposition of $L, L^{\ast}$) we describe a Tikhonov regularization scheme for our problem. Tikhonov regularization is discussed e.g.\ in \cite{CK} (cf.\ also Section 3.3 in \cite{RS17a}).

\begin{lem}
\label{lem:ThikonovII}
Let $n\geq 1$ and $s\in(0,1)$.
Let $\Omega \subset \R^n$ be a bounded open set and assume that $W \subset \R^n$ is open with $\overline{\Omega}\cap \overline{W}= \emptyset$. Then the following results hold:
\begin{itemize}
\item[(i)] For each $h \in H^{-s}(W)$ and each $\alpha>0$ the functional 
\begin{align*}
\mathcal{E}_{\alpha}(v):=\|(-\D)^s v|_{W} - h\|_{H^{-s}(W)}^2 + \alpha \|v\|_{\widetilde{H}^s(\Omega)}^2, \ v \in \widetilde{H}^s(\Omega),
\end{align*}
has a unique minimizer $v_{\alpha}=:R_{\alpha}(h) \in \widetilde{H}^s(\Omega)$. Moreover, $(-\D)^s v_{\alpha}|_{W} \rightarrow h$ in $H^{-s}(W)$ as $\alpha \rightarrow 0$.
\item[(ii)]
If $h \in R(L)$ with $L$ being the operator from \eqref{eq:L}, i.e. if there exists $w \in \widetilde{H}^s(\Omega)$ such that $(-\D)^s w|_{W}=h$, then $v_{\alpha}=R_{\alpha}(h) \rightarrow w$ in $\widetilde{H}^s(\Omega)$.
\end{itemize}
\end{lem}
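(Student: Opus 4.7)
The plan is to view $\mathcal{E}_\alpha$ as classical Tikhonov regularization for the compact linear operator $L\colon \widetilde{H}^s(\Omega)\to H^{-s}(W)$ of Lemma \ref{lem:Aq_Lq1}, so that both conclusions reduce to standard Hilbert-space arguments exploiting the three properties already established for $L$: compactness, injectivity, and density of the range.

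For (i), I rewrite $\mathcal{E}_\alpha(v)=\|Lv-h\|_{H^{-s}(W)}^2+\alpha\|v\|_{\widetilde{H}^s(\Omega)}^2$. Being a continuous, strictly convex, coercive quadratic functional on the Hilbert space $\widetilde{H}^s(\Omega)$, $\mathcal{E}_\alpha$ attains a unique minimizer $v_\alpha$, characterized by the normal equation $(L^*L+\alpha I)v_\alpha=L^*h$, which is uniquely solvable because $L^*L$ is self-adjoint and nonnegative while $\alpha>0$. To prove $Lv_\alpha\to h$ in $H^{-s}(W)$, I invoke the density of $R(L)$ in $H^{-s}(W)$ from Lemma \ref{lem:Aq_Lq1}: given $\eta>0$, pick $w_\eta\in\widetilde{H}^s(\Omega)$ with $\|Lw_\eta-h\|_{H^{-s}(W)}\leq\eta$. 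The minimality of $v_\alpha$ yields
\[
\|Lv_\alpha-h\|_{H^{-s}(W)}^2+\alpha\|v_\alpha\|_{\widetilde{H}^s(\Omega)}^2 \leq \eta^2+\alpha\|w_\eta\|_{\widetilde{H}^s(\Omega)}^2,
\]
so passing to $\limsup_{\alpha\to 0}$ and then sending $\eta\to 0$ closes the argument.

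For (ii), with $h=Lw$ the comparison $\mathcal{E}_\alpha(v_\alpha)\leq\mathcal{E}_\alpha(w)=\alpha\|w\|_{\widetilde{H}^s(\Omega)}^2$ immediately delivers the a priori bounds $\|v_\alpha\|_{\widetilde{H}^s(\Omega)}\leq\|w\|_{\widetilde{H}^s(\Omega)}$ and $\|Lv_\alpha-Lw\|_{H^{-s}(W)}\leq\sqrt{\alpha}\,\|w\|_{\widetilde{H}^s(\Omega)}$. Along any sequence $\alpha_k\to 0$, weak compactness extracts a subsequence with weak limit $v^*\in\widetilde{H}^s(\Omega)$; continuity of $L$ transfers this to $Lv_{\alpha_k}\rightharpoonup Lv^*$, and the strong convergence $Lv_{\alpha_k}\to Lw$ forces $Lv^*=Lw$. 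The injectivity of $L$ (Lemma \ref{lem:Aq_Lq1}) then identifies $v^*=w$, so the full net converges weakly to $w$.

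The one step that is not entirely automatic is upgrading this weak convergence to strong convergence in $\widetilde{H}^s(\Omega)$, which is where I expect the main care to be required. I combine the a priori bound with weak lower semicontinuity to get
\[
\|w\|_{\widetilde{H}^s(\Omega)} \leq \liminf_{\alpha\to 0}\|v_\alpha\|_{\widetilde{H}^s(\Omega)} \leq \limsup_{\alpha\to 0}\|v_\alpha\|_{\widetilde{H}^s(\Omega)} \leq \|w\|_{\widetilde{H}^s(\Omega)},
\]
so $\|v_\alpha\|_{\widetilde{H}^s(\Omega)}\to\|w\|_{\widetilde{H}^s(\Omega)}$. In a Hilbert space, weak convergence together with convergence of norms gives strong convergence, which concludes the proof of (ii). The rest is a verbatim application of the classical regularization framework, with the only problem-specific ingredients being the three structural properties of $L$ recorded in Lemma \ref{lem:Aq_Lq1}.
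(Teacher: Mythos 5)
Your proof is correct and takes essentially the same approach as the paper: classical Tikhonov regularization for the compact, injective operator $L$ with dense range, whose structural properties were isolated in Lemma \ref{lem:Aq_Lq1}. The paper simply cites the standard Tikhonov theorems (Theorems 4.13--4.15 in Colton--Kress), whereas you unfold those arguments into a self-contained proof; the mathematical content is identical, including the density-based argument for $Lv_\alpha\to h$ in (i) and the weak-compactness-plus-norm-convergence upgrade to strong convergence in (ii).
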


\begin{proof}
Both properties follow from general arguments on Tikhonov regularization combined with the mapping properties of $L$: As the operator $L$ is a compact, linear operator by Lemma \ref{lem:Aq_Lq1}, Theorem 4.14 in \cite{CK} asserts the existence of a unique solution of the minimization problem for $\mathcal{E}_{\alpha}$. Since furthermore the operator $L$ has a dense image in $H^{-s}(W)$, we also obtain the approximation property claimed in (i) (Theorem 4.15 in \cite{CK}). 

Theorem 4.13 in \cite{CK} implies that Tikhonov regularization is a regularization scheme. Hence, if $h = Lw$ for some $w \in \widetilde{H}^s(\Omega)$, this in particular implies the pointwise convergence 
\begin{align*}
v_{\alpha}= R_{\alpha}(h) \rightarrow w \mbox{ in } \widetilde{H}^s(\Omega)
\end{align*}
as $\alpha \rightarrow 0$.
\end{proof}

\subsection{Minimal $L^2$ norm regularization}
\label{sec:minimal}

Finally, as a further possible means of recovering $v$ from $(-\D)^s v|_{W}$, we use a variational approach which is analogous to the one presented in \cite{RulandSalo_nonlocal} and \cite{FernandezCaraZuazua}. In the sequel, with slight abuse of notation, we will write $\int\limits_{W} h f dx$ also for $h \in H^{-s}(W)$ and $f\in \tilde{H}^s(W)$ to denote the corresponding duality pairing (which we view as an extension by continuity to these spaces).

\begin{lem}
\label{lem:energy}
Let $n\geq 1$ and $s\in(0,1)$.
Let $\Omega \subset \R^n$ be a bounded open set and assume that $W \subset \Omega_e$ is open with $\overline{\Omega}\cap \overline{W}= \emptyset$.
For $\alpha>0$ and $h\in H^{-s}(W)$ consider the functional
\begin{align*}
\mathcal{J}_{\alpha}(f):= \frac{1}{2}\|u\|_{L^2(\Omega)}^2 - \int\limits_{W} h f dx + \alpha \|f\|_{\widetilde{H}^{s}(W)}, \ f \in \widetilde{H}^{s}(W),
\end{align*}
where $u$ and $f$ are related through 
\begin{align}
\label{eq:eq}
\begin{split}
(-\D)^s u & = 0 \mbox{ in } \Omega,\\
u & = f \mbox{ in } \Omega_e.
\end{split}
\end{align}
Then, for each $h \in H^{-s}(W)$ and $\alpha>0$ there exists a unique minimizer $f_{\alpha} \in \widetilde{H}^{s}(W)$ of the functional $\mathcal{J}_{\alpha}$. Denoting the associated solution of \eqref{eq:eq} with exterior data $f_{\alpha}$ by $\hat{u}_{\alpha} \in H^{s}(\R^n)$ and defining $\hat{\varphi}_{\alpha}=:R_{\alpha}(h) \in \widetilde{H}^s(\Omega)$ to be the solution to the dual equation
\begin{align}
\label{eq:dual_1}
\begin{split}
(-\D)^s \hat{\varphi}_{\alpha} & = -\hat{u}_{\alpha} \mbox{ in } \Omega,\\
\hat{\varphi}_{\alpha} & = 0 \mbox{ in } \Omega_e,
\end{split}
\end{align}
we then have $\|(-\D)^s \hat{\varphi}_{\alpha}|_{W} - h \|_{H^{-s}(W)}\leq \alpha$ and $\mathcal{J}_{\alpha}(f_{\alpha}) = -\frac{1}{2} \| \hat{u}_{\alpha} \|_{L^2(\Omega)}^2$.
\end{lem}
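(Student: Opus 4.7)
The plan is to treat $\mathcal J_\alpha$ as a strictly convex, coercive, weakly lower semicontinuous functional on the Hilbert space $\widetilde H^s(W)$, produce $f_\alpha$ by the direct method, and then extract both remaining assertions from the Euler--Lagrange condition after identifying the derivative of the quadratic term in terms of $\hat\varphi_\alpha$ via a duality computation. Since $P_0 : \widetilde H^s(W) \to H^s(\R^n)$ is bounded linear (Lemma \ref{lem:well_posed} with $q\equiv 0$), all three terms of $\mathcal J_\alpha$ are convex and weakly lower semicontinuous. Strict convexity reduces to injectivity of $f \mapsto P_0 f|_\Omega$: if $P_0(f_1-f_2) = 0$ in $\Omega$, then $u := P_0(f_1-f_2) \in H^s(\R^n)$ solves $(-\D)^s u = 0$ in $\Omega$ and vanishes on the open set $\Omega$, so the weak UCP \cite[Theorem 1.2]{GhoshSaloUhlmann} forces $u \equiv 0$ and hence $f_1 = f_2$ in $\widetilde H^s(W)$.

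The hard part is coercivity, which I handle by contradiction. Suppose $\|f_k\|_{\widetilde H^s(W)} \to \infty$ with $\mathcal J_\alpha(f_k)$ bounded; then
\[
\tfrac12 \|P_0 f_k\|^2_{L^2(\Omega)} \leq \mathcal J_\alpha(f_k) + \|h\|_{H^{-s}(W)}\|f_k\|_{\widetilde H^s(W)} = O(\|f_k\|_{\widetilde H^s(W)}),
\]
so the normalized sequence $\tilde f_k := f_k/\|f_k\|_{\widetilde H^s(W)}$ satisfies $\|P_0 \tilde f_k\|_{L^2(\Omega)} \to 0$. Passing to a weakly convergent subsequence $\tilde f_k \rightharpoonup \tilde f^*$ in $\widetilde H^s(W)$ and using weak continuity of $P_0$, the limit satisfies $P_0 \tilde f^* = 0$ on $\Omega$; the weak UCP again forces $\tilde f^* \equiv 0$, whence $\int_W h \tilde f_k \to 0$ and $\mathcal J_\alpha(f_k)/\|f_k\|_{\widetilde H^s(W)} \to \alpha > 0$, contradicting boundedness. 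The direct method then produces the unique minimizer $f_\alpha$.

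For the last two assertions, I split $\mathcal J_\alpha = \Phi + \alpha \|\cdot\|_{\widetilde H^s(W)}$ with the smooth convex part $\Phi(f) := \tfrac12 \|P_0 f\|^2_{L^2(\Omega)} - \int_W h f\, dx$. The Euler--Lagrange condition at $f_\alpha$ reads $\Phi'(f_\alpha) = -\alpha \zeta_\alpha$ in $H^{-s}(W)$ for some $\zeta_\alpha$ in the subdifferential of the norm at $f_\alpha$, which always obeys $\|\zeta_\alpha\|_{H^{-s}(W)} \leq 1$ and $\langle \zeta_\alpha, f_\alpha\rangle = \|f_\alpha\|_{\widetilde H^s(W)}$. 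To identify $\Phi'(f_\alpha)$, I use that $\hat\varphi_\alpha$ is supported in $\overline\Omega$ and $(-\D)^s P_0 g = 0$ in $\Omega$, so by self-adjointness of $(-\D)^s$ on $\R^n$,
\[
0 = \int_{\R^n} \hat\varphi_\alpha (-\D)^s P_0 g\, dx = \int_{\R^n} (-\D)^s \hat\varphi_\alpha \cdot P_0 g\, dx.
\]
Splitting the last integral into its contributions from $\Omega$ (where $(-\D)^s \hat\varphi_\alpha = -\hat u_\alpha$) and $\Omega_e$ (where $P_0 g = g$, supported in $\overline W$) gives $\int_\Omega \hat u_\alpha P_0 g\, dx = \int_W (-\D)^s \hat\varphi_\alpha \cdot g\, dx$, hence $\Phi'(f_\alpha) \cdot g = \int_W [(-\D)^s \hat\varphi_\alpha - h] g\, dx$ for every $g \in \widetilde H^s(W)$. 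Comparing with the EL identity yields $\|(-\D)^s \hat\varphi_\alpha|_W - h\|_{H^{-s}(W)} = \alpha \|\zeta_\alpha\|_{H^{-s}(W)} \leq \alpha$, while testing the EL identity at $g = f_\alpha$ produces $\|\hat u_\alpha\|^2_{L^2(\Omega)} - \int_W h f_\alpha\, dx + \alpha \|f_\alpha\|_{\widetilde H^s(W)} = 0$. Substituting into the definition of $\mathcal J_\alpha(f_\alpha)$ collapses two of the three terms and leaves $\mathcal J_\alpha(f_\alpha) = -\tfrac12 \|\hat u_\alpha\|^2_{L^2(\Omega)}$, as claimed.
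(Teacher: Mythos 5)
Your proof is correct and follows essentially the same strategy as the paper's: existence via the direct method with strict convexity and coercivity both resting on the weak UCP of \cite[Theorem 1.2]{GhoshSaloUhlmann}, and the remaining two claims extracted from the Euler--Lagrange condition after identifying $\Phi'(f_\alpha)$ through the duality identity that appears in \eqref{eq:ident_dual}. Two modest differences are worth noting. First, the paper's coercivity proof splits into the cases $\liminf \|\hat u_k\|_{L^2(\Omega)}>0$ and $\to 0$; you avoid the case distinction by observing, under the contradiction hypothesis that $\mathcal J_\alpha(f_k)$ stays bounded, that $\|P_0\tilde f_k\|_{L^2(\Omega)}\to 0$ is automatic from $\|P_0 f_k\|_{L^2(\Omega)}^2 = O(\|f_k\|_{\widetilde H^s(W)})$. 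This collapses the argument to the second case only; do, however, state the conclusion as $\liminf \mathcal J_\alpha(f_k)/\|f_k\|\ge \alpha$ rather than equality, since the first (nonnegative) term of the rescaled functional need not vanish in the limit. Second, the paper derives the variational inequality $\left|\int_\Omega \hat u_\alpha u - \int_W h f\right|\le\alpha\|f\|_{\widetilde H^s(W)}$ directly from $\mathcal J_\alpha(f_\alpha)\le\mathcal J_\alpha(f_\alpha+\mu f)$ and then invokes duality, citing \cite{RulandSalo_nonlocal} for the value $\mathcal J_\alpha(f_\alpha)=-\tfrac12\|\hat u_\alpha\|_{L^2(\Omega)}^2$; you phrase the same optimality condition via the subdifferential of the norm and carry out the computation of the minimal value yourself, which is equivalent but makes your proof more self-contained. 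One small caution on the ``splitting'' step: the duality pairing $\langle(-\D)^s\hat\varphi_\alpha,P_0g\rangle$ cannot literally be split over $\Omega$ and $\Omega_e$; the clean way is to decompose $P_0g=(P_0g-g)+g$ with $P_0g-g\in\widetilde H^s(\Omega)$ and $g\in\widetilde H^s(W)$ and use the corresponding restricted pairings, which yields exactly your identity. With that bookkeeping spelled out, the argument matches the paper's.
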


\begin{proof}
The proof follows along the lines of Lemmas 4.1 and 4.2 in \cite{RulandSalo_nonlocal}, which is based on the variational approach from \cite{FernandezCaraZuazua}. For self-containedness, we repeat the argument: Firstly, we note that the functional $\mathcal{J}_{\alpha}$ is strictly convex and continuous with respect to $\widetilde{H}^{s}(W)$ convergence (of $f\in \widetilde{H}^{s}(W)$). The proof of strict convexity uses weak unique continuation. Hence, in order to prove existence, it suffices to check coercivity, which follows from the unique continuation property of the fractional Laplacian. Indeed, assume that $f_k \in \widetilde{H}^s(W)$ is a sequence with $\|f_k\|_{\widetilde{H}^s(W)} \rightarrow \infty$. Then, we define the rescaled functions $\hat{f}_k:= \frac{f_k}{\|f_k\|_{\widetilde{H}^s(W)}}$, which are of unit norm (and thus weakly precompact). Rescaling the functional $\mathcal{J}_{\alpha}$ yields
\begin{align}
\label{eq:coerc}
\frac{\mathcal{J}_{\alpha}(f_k)}{\|f_k\|_{\widetilde{H}^s(W)}}
= \frac{1}{2}\|\hat{u}_k\|_{L^2(\Omega)}^2 \|f_k\|_{\widetilde{H}^s(W)} - \int\limits_{W} h \hat{f}_k dx + \alpha .
\end{align}
Here $\hat{u}_k:= \frac{u_k}{\|f_k\|_{\widetilde{H}^s(W)}}$, with $u_k$ being a solution to \eqref{eq:eq} with exterior data $f_k$, and the integral over $W$ denotes the duality of $H^{-s}(W)$ and $\widetilde{H}^s(W)$.
We now distinguish two cases:
\begin{itemize}
\item If $\liminf\limits_{k\rightarrow \infty}\|\hat{u}_k\|_{L^2(\Omega)}>0$, then the boundedness of $\|\hat{f}_k\|_{\widetilde{H}^s(W)}$ and of $\|h\|_{H^{-s}(W)}$ and the unboundedness of $\|f_k\|_{\widetilde{H}^s(W)}$ imply
\begin{align*}
\liminf\limits_{k\rightarrow \infty} \frac{\mathcal{J}_{\alpha}(f_k)}{\|f_k\|_{\widetilde{H}^s(W)}} \geq \liminf\limits_{k\rightarrow \infty} \left[ \frac{1}{2}\|\hat{u}_k\|_{L^2(\Omega)}^2 \|f_k\|_{\widetilde{H}^{s}(W)} - \|h\|_{H^{-s}(W)} + \alpha \right] = \infty.
\end{align*}
This in particular yields the desired coercivity.
\item If along some subsequence in $k$ (which without loss of generality, we may assume to be the whole sequence), we have 
$\lim\limits_{k\rightarrow \infty}\|\hat{u}_k\|_{L^2(\Omega)}=0$, we infer that
\begin{align*}
\hat{u}_k \rightharpoonup \psi \mbox{ in } H^s(\R^n),\ 
\hat{u}_k \rightarrow 0 \mbox{ in } L^2(\Omega), \ \hat{f}_k \rightharpoonup f_{\infty} \mbox{ in } \widetilde{H}^{s}(W).
\end{align*}
Here we used the estimate $\|u_k\|_{H^s(\R^n)} \leq C \|f_k\|_{\widetilde{H}^s(W)}$ for solutions to the equation \eqref{eq:eq} in order to infer the first convergence result.
Moreover, the function $\psi$ in $H^s(\R^n)$ solves
\begin{align*}
(-\D)^s \psi & = 0 \mbox{ in } \Omega,\\
\psi& = f_{\infty} \mbox{ in } \Omega_e.
\end{align*}
By the fact that $\psi= 0$ in $\Omega$ (which follows since $\hat{u}_k \rightarrow 0 \mbox{ in } L^2(\Omega)$) and by (weak) unique continuation for the fractional Laplacian, this however entails that $\psi \equiv 0$. In particular, $f_{\infty}=0$. Thus, returning to \eqref{eq:coerc} and using that $\int\limits_{W} h \hat{f}_k dx \to 0$, we deduce that for $k$ sufficiently large it holds that
\begin{align*}
\frac{\mathcal{J}_{\alpha}(f_k)}{\|f_k\|_{\widetilde{H}^s(W)}} \geq \frac{1}{2}\|\hat{u}_k\|_{L^{2}(\Omega)}^2 \|f_k\|_{\widetilde{H}^s(W)} + \frac{\alpha}{2}
\geq \frac{\alpha}{2}.
\end{align*}
Again, this yields the desired coercivity and therefore concludes the existence proof.
\end{itemize}

The smallness condition $\|(-\D)^s \hat{\varphi}_{\alpha}|_{W} - h \|_{H^{-s}(W)}\leq \alpha$ follows from considering variations of the functional around the minimum. Indeed, spelling out minimality condition
\begin{align*}
\mathcal{J}_{\alpha}(f_{\alpha}) \leq \mathcal{J}_{\alpha}(f_{\alpha} + \mu f), \ \mu \in \R,
\end{align*}
as in \cite{RulandSalo_nonlocal} and combining it with the triangle inequality gives
\begin{align}
\label{eq:ELE}
\left| \int\limits_{\Omega} \hat{u}_{\alpha} u dx - \int\limits_{W} h f dx \right| \leq \alpha \|f\|_{\widetilde{H}^{s}(W)}.
\end{align}
By the definition of $\hat{\varphi}_{\alpha}$ and the identity
\begin{align}
\label{eq:ident_dual}
 \int\limits_{\Omega} \hat{u}_{\alpha} u \,dx = \int\limits_{\R^n} -(-\Delta)^s \hat{\varphi}_{\alpha} (u-f) \,dx =  \int\limits_{W} (-\D)^s \hat{\varphi}_{\alpha}|_W f \,dx, 
\end{align}
we further deduce that
\begin{align}
\label{eq:ELG}
\left|\int\limits_{W} \left[ (-\D)^s \hat{\varphi}_{\alpha}|_{W} -h \right] f dx\right| \leq \alpha \|f\|_{\widetilde{H}^s(W)}.
\end{align}
Duality then implies the estimate $\|(-\D)^s \hat{\varphi}_{\alpha}|_{W} - h \|_{H^{-s}(W)}\leq \alpha$. The condition $\mathcal{J}_{\alpha}(f_{\alpha}) = -\frac{1}{2} \| \hat{u}_{\alpha} \|_{L^2(\Omega)}^2$ follows also from the minimality condition as in \cite{RulandSalo_nonlocal}.
\end{proof}

Having established approximate recovery, we seek to show that the variational argument from above is a regularization scheme, i.e.\ that it recovers the function exactly if $h \in R(L)$, where $L$ is the operator from \eqref{eq:L}. To this end, we will need to assume some extra regularity.

\begin{lem}
\label{lem:functional_0}
Assume the conditions in Lemma \ref{lem:energy}, and assume additionally that $\Omega$ has $C^{\infty}$ boundary. Let $h\in H^{-s}(W)$ and assume that $h\in R(L)$ with $L$ as in \eqref{eq:L}, i.e., that there exists $\overline{\varphi}\in \widetilde{H}^{s}(\Omega)$ with $h = (-\D)^s \overline{\varphi}|_{W}$. Suppose moreover that $\overline{u}:=(-\D)^s\overline{\varphi}|_{\Omega} \in L^2(\Omega)$.
Then, with $\hat{\varphi}_{\alpha} = R_{\alpha} h$, for some sequence $\alpha \to 0$ one has 
\[
\hat{\varphi}_{\alpha} \to \overline{\varphi} \mbox{ in } \widetilde{H}^s(\Omega).
\]
\end{lem}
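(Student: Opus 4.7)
The plan is to first establish a uniform a priori bound on $\hat{u}_\alpha$ in $L^2(\Omega)$ using the hypothesis $h \in R(L)$ with $L^2$-regularity of $\overline{u}$, then extract a weak limit of $\hat{\varphi}_\alpha$ in $\widetilde{H}^s(\Omega)$, identify it with $\overline{\varphi}$ via weak unique continuation, and finally upgrade to strong convergence through the Hilbert-space principle that weak convergence plus convergence of norms equals strong convergence.

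For the uniform bound, I first rewrite $\int_W h f \, dx$ using $h = (-\Delta)^s \overline{\varphi}|_W$. Since $\overline{\varphi} \in \widetilde{H}^s(\Omega)$ is supported in $\overline{\Omega}$, $f \in \widetilde{H}^s(W)$ vanishes on $\Omega$, and the Poisson extension $u$ satisfies $(-\Delta)^s u = 0$ in $\Omega$ with $u = f$ on $\Omega_e$, self-adjointness of $(-\Delta)^s$ yields the duality identity
\[
\int_W h f \, dx = -\int_\Omega \overline{u} \, u \, dx.
\]
Substituting into $\mathcal{J}_\alpha$ and completing the square gives
\[
\mathcal{J}_\alpha(f) = \tfrac{1}{2}\|u + \overline{u}\|_{L^2(\Omega)}^2 - \tfrac{1}{2}\|\overline{u}\|_{L^2(\Omega)}^2 + \alpha \|f\|_{\widetilde{H}^s(W)}.
\]
Combining $\mathcal{J}_\alpha(f_\alpha) = -\tfrac{1}{2}\|\hat{u}_\alpha\|_{L^2(\Omega)}^2$ from Lemma \ref{lem:energy} with the minimality estimate $\mathcal{J}_\alpha(f_\alpha) \leq \mathcal{J}_\alpha(0) = 0$ produces the key identity
\[
\|\hat{u}_\alpha + \overline{u}\|_{L^2(\Omega)}^2 + 2\alpha \|f_\alpha\|_{\widetilde{H}^s(W)} = \|\overline{u}\|_{L^2(\Omega)}^2 - \|\hat{u}_\alpha\|_{L^2(\Omega)}^2.
\]
In particular $\|\hat{u}_\alpha\|_{L^2(\Omega)} \leq \|\overline{u}\|_{L^2(\Omega)}$, and by the standard energy estimate for the dual equation $(-\Delta)^s \hat{\varphi}_\alpha = -\hat{u}_\alpha$ in $\Omega$ with $\hat{\varphi}_\alpha = 0$ in $\Omega_e$ also $\|\hat{\varphi}_\alpha\|_{\widetilde{H}^s(\Omega)} \leq C\|\overline{u}\|_{L^2(\Omega)}$ uniformly in $\alpha$.

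Passing to a subsequence, $\hat{\varphi}_\alpha \rightharpoonup \varphi^*$ weakly in $\widetilde{H}^s(\Omega)$ and $\hat{u}_\alpha \rightharpoonup u^*$ weakly in $L^2(\Omega)$. Weak continuity of $(-\Delta)^s \colon H^s(\R^n) \to H^{-s}(\R^n)$ combined with $(-\Delta)^s \hat{\varphi}_\alpha|_\Omega = -\hat{u}_\alpha$ identifies $(-\Delta)^s \varphi^*|_\Omega = -u^*$. Passing to the limit in the a posteriori bound $\|(-\Delta)^s \hat{\varphi}_\alpha|_W - h\|_{H^{-s}(W)} \leq \alpha$ from Lemma \ref{lem:energy} yields $(-\Delta)^s \varphi^*|_W = h = (-\Delta)^s \overline{\varphi}|_W$. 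The function $\varphi^* - \overline{\varphi} \in \widetilde{H}^s(\Omega)$ therefore vanishes on $W$ and has zero fractional Laplacian there, so the weak unique continuation principle \cite[Theorem 1.2]{GhoshSaloUhlmann} forces $\varphi^* = \overline{\varphi}$ and consequently $u^* = -\overline{u}$.

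For strong convergence, weak lower semicontinuity together with the bound $\|\hat{u}_\alpha\|_{L^2(\Omega)} \leq \|\overline{u}\|_{L^2(\Omega)} = \|u^*\|_{L^2(\Omega)}$ forces $\|\hat{u}_\alpha\|_{L^2(\Omega)} \to \|u^*\|_{L^2(\Omega)}$; equivalently, the identity from the first step already gives $\|\hat{u}_\alpha + \overline{u}\|_{L^2(\Omega)}^2 \to 0$, i.e.\ $\hat{u}_\alpha \to -\overline{u}$ strongly in $L^2(\Omega)$. Applying the energy estimate to
\[
(-\Delta)^s (\hat{\varphi}_\alpha - \overline{\varphi}) = -(\hat{u}_\alpha + \overline{u}) \text{ in } \Omega, \qquad \hat{\varphi}_\alpha - \overline{\varphi} = 0 \text{ in } \Omega_e,
\]
then transfers this to $\hat{\varphi}_\alpha \to \overline{\varphi}$ strongly in $\widetilde{H}^s(\Omega)$. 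The uniqueness of the limit in fact implies that the full family converges, not just a subsequence. The main delicacy is the passage from weak to strong convergence; it is precisely the square-completion performed at the outset that produces the defect $\|\hat{u}_\alpha + \overline{u}\|^2$ whose vanishing the weak-convergence analysis then establishes, and this step crucially uses the assumption $\overline{u} \in L^2(\Omega)$.
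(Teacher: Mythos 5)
Your proof is correct, and it takes a genuinely different route from the paper's. The paper invokes the Vishik--Eskin regularity estimates (this is precisely why the lemma assumes $C^{\infty}$ boundary) to obtain a uniform $\widetilde{H}^{s+\beta}(\Omega)$ bound on $\hat{\varphi}_{\alpha}$, and then uses compact Sobolev embedding to extract a \emph{strongly} convergent subsequence directly; the identification of the limit as $\overline{\varphi}$ via weak unique continuation then finishes the argument. You instead complete the square in $\mathcal{J}_{\alpha}$ to extract the identity $\|\hat{u}_{\alpha}+\overline{u}\|_{L^2}^2 + 2\alpha\|f_{\alpha}\|_{\widetilde{H}^s(W)} = \|\overline{u}\|_{L^2}^2 - \|\hat{u}_{\alpha}\|_{L^2}^2$, which yields the uniform bound $\|\hat{u}_{\alpha}\|_{L^2}\le\|\overline{u}\|_{L^2}$ and, once the weak limit $u^*=-\overline{u}$ is identified by the UCP, forces the norm convergence $\|\hat{u}_{\alpha}\|_{L^2}\to\|\overline{u}\|_{L^2}$; the Hilbert-space principle (weak plus norm convergence implies strong) then gives strong $L^2$ convergence of $\hat{u}_{\alpha}$, which the variational energy estimate for the fractional Dirichlet problem transfers to strong $\widetilde{H}^s(\Omega)$ convergence of $\hat{\varphi}_{\alpha}$. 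What your approach buys: it avoids the Vishik--Eskin machinery entirely, so the $C^{\infty}$ boundary hypothesis is not needed, and the subsequence argument you close with (every subsequence has a further subsequence converging to $\overline{\varphi}$) in fact gives convergence of the whole family, a bonus over the lemma's stated conclusion. One small side remark: the paper's displayed computation writes $\int_W (-\Delta)^s\overline{\varphi}\,f\,dx = \int_{\Omega}\overline{u}\,u\,dx$, whereas the correct identity carries a minus sign as you have it; this is a harmless typo in the paper since the next step applies Cauchy--Schwarz, which absorbs the sign.
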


\begin{proof}
Using the fact that $h = (-\D)^s \bar{\varphi}|_{W}$ and the regularity assumption $\overline{u} =(-\D)^s\overline{\varphi}|_{\Omega} \in L^2(\Omega)$, we compute 
\begin{align*}
\mathcal{J}_{\alpha}(f) 
&\geq \frac{1}{2}\|u\|_{L^2(\Omega)}^2 - \int\limits_{W}(-\D)^s \bar{\varphi} f \,dx
=  \frac{1}{2}\|u\|_{L^2(\Omega)}^2 - \int\limits_{\Omega} \overline{u} u  \,dx\\
&\geq \frac{1}{2}\|u\|_{L^2(\Omega)}^2 - \|\overline{u}\|_{L^2(\Omega)}\|u\|_{L^2(\Omega)}
\geq \frac{1}{4}\|u\|_{L^2(\Omega)}^2 - \|\overline{u}\|_{L^2(\Omega)}^2.
\end{align*}
Thus in particular $\mathcal{J}_{\alpha}(f) \geq - \|\overline{u}\|_{L^2(\Omega)}^2$ for all $f \in \widetilde{H}^s(W)$. The formula $\mathcal{J}_{\alpha}(f_{\alpha}) = -\frac{1}{2} \| \hat{u}_{\alpha} \|_{L^2(\Omega)}^2$ yields that 
\[
\| \hat{u}_{\alpha} \|_{L^2(\Omega)}^2 \leq 2 \|\overline{u}\|_{L^2(\Omega)}^2, \qquad \alpha > 0.
\]
The Vishik-Eskin regularity estimates, see \cite[Theorem 3.1]{Grubb} (here we use that $\Omega$ has $C^{\infty}$ boundary), imply that for some $\beta > 0$ 
\[
\| \hat{\varphi}_{\alpha} \|_{\widetilde{H}^{s+\beta}(\Omega)} \leq C, \qquad \alpha > 0.
\]
Compact Sobolev embedding implies that, for some sequence $\alpha \to 0$, 
\[
\hat{\varphi}_{\alpha} \to \psi \text{ in $\widetilde{H}^s(\Omega)$}.
\]
The convergence $(-\Delta)^s \hat{\varphi}_{\alpha}|_{W} \to h = (-\Delta)^s \overline{\varphi}|_W$ in $H^{-s}(W)$ implies that 
\[
(-\Delta)^s \psi|_W = (-\Delta)^s \overline{\varphi}|_W.
\]
Since also $\psi|_W = \overline{\varphi}|_W = 0$, weak unique continuation for the fractional Laplacian implies that $\psi = \overline{\varphi}$ in $\R^n$. This concludes the proof.
\end{proof}

\section{Recovery of $q$}
\label{sec:recov}

In this section we present the argument for Theorem \ref{thm_main}, taking the results of Theorems \ref{thm_ucp_constructive} and \ref{thm_ucp_positive_measure} for granted. The main issue here is to rule out that $u$ vanishes on a too large subset of $\Omega$ in order to define $q$ by means of the quotient $\frac{(-\D)^s u}{u}$. The control on the size of the nodal set of $u$ is ensured by the measurable unique continuation property of Theorem \ref{thm_ucp_positive_measure}.

\begin{proof}[Proof of Theorem \ref{thm_main}]
\emph{Step 1: Recovery of $u$.}
By assumption, for some known $f\in \widetilde{H}^{s}(W_1) \setminus \{ 0 \}$, we are given $\Lambda_q f|_{W_2} = (-\D)^s u|_{W_2}$. Then, the function $v:=u-f$ satisfies
\begin{align*}
((-\D)^s + q) v & = -(-\D)^s f \mbox{ in } \Omega,\\
v & = 0 \mbox{ in } \Omega_e.
\end{align*}
In particular, $v \in \widetilde{H}^{s}(\Omega)$. Hence, by Theorem \ref{thm_ucp_constructive} (or any of the other reconstruction schemes presented in Section \ref{sec:Aq}) $v$ can be reconstructed from the knowledge $(-\Delta)^s v|_{W_2}$. But linearity and the definition of $\Lambda_q$ yield
\begin{align*}
(-\D)^s v|_{W_2} = (-\D)^s u|_{W_2} - (-\D)^s f|_{W_2}
= \Lambda_q f|_{W_2} - (-\D)^s f|_{W_2}. 
\end{align*}
Since $f \in \widetilde{H}^{s}(W_1)$ is assumed to be known, we can constructively recover $v$ from $\Lambda_q f|_{W_2}$. As $u=f + v$, this also yields the constructive recovery of the full function $u$ in $\R^n$.\\

\emph{Step 2: Reconstruction of the potential $q$.}
We split the reconstruction argument for $q$ into two steps and first deal with $q\in C^0(\overline{\Omega})$ and then with $q \in L^{\infty}(\Omega)$.\\

\emph{Step 2a: $q\in C^0(\overline{\Omega})$.} We note that by the fractional Schr\"odinger equation \eqref{eq:frac_Schr}, which is obeyed by $u$, we have
\begin{align*}
q(x) = \frac{(-\D)^s u(x)}{u(x)} 
\end{align*}
for almost every $x \in \Omega$ such that $u(x)\neq 0$. We claim that this suffices to recover $q$ in the whole of $\Omega$ by invoking the weak unique continuation property of the fractional Laplacian and the continuity of $q$. Indeed, fix an arbitrary point $x_0 \in \Omega$. Then the weak unique continuation principle for the fractional Laplacian implies that there exists a sequence $(x_k)$ with $\Omega \ni x_k \rightarrow x_0\in \Omega$ and $u(x_k)\neq 0$. Indeed, else $u=0$ on an open subset of $\Omega$, but by the weak unique continuation property this would entail that $u\equiv 0$, which is impossible since $f$ is not identically zero. Hence, by continuity,
\begin{align*}
q(x_0) = \lim\limits_{k\rightarrow \infty} q(x_k) = \lim\limits_{k\rightarrow \infty} \frac{(-\D)^s u(x_k)}{u(x_k)}.
\end{align*}
Since $x_0 \in \Omega$ was arbitrary, this concludes the argument for the recovery of continuous potentials.\\

\emph{Step 2b: $q\in L^{\infty}(\Omega)$, $s\geq \frac{1}{4}$.}
Since $q \in L^{\infty}(\Omega)$, the potential $q$ is only defined up to a null set. By the measurable boundary unique continuation result of Theorem \ref{thm_ucp_positive_measure}, there exists no set $E \subset \Omega$ with $|E|>0$ such that $u|_{E} \equiv 0$. Hence, the quotient
\begin{align*}
q(x)= \frac{(-\D)^s u(x)}{ u(x)}
\end{align*}
is well defined for almost every $x\in \Omega$, which thus allows us to recover $q\in L^{\infty}(\Omega)$.
\end{proof}

\section{Unique Continuation from Measurable Sets}
\label{sec:ucp}

In the sequel, we seek to prove the following unique continuation result from measurable sets:

\begin{prop}[Measurable UCP]
\label{prop:mUCP}
Let $\Omega \subset \R^n$ with $n\geq 1$ be a bounded open set and let $q \in L^{\infty}(\Omega)$. Let $s\in [\frac{1}{4},1)$ and assume that $u \in H^s(\R^n)$ satisfies 
\begin{align}
\label{eq:schr}
((-\D)^s + q)u=0 \mbox{ in } \Omega. 
\end{align}
If for some measurable set $E \subset \Omega$ with $|E|>0$ we have $u|_{E}=0$, then $u \equiv 0$ in $\R^n$.
\end{prop}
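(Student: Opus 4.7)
The plan is to reduce the measurable unique continuation problem for the nonlocal equation to a boundary unique continuation problem from measurable sets for a degenerate elliptic equation in the half-space, and then to establish the latter via a Carleman estimate combined with a Lebesgue density argument.

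First, I would use the Caffarelli--Silvestre extension to represent $u$ as the trace of a function $\tilde u \in H^{1}(\R^{n+1}_+, x_{n+1}^{1-2s} dx)$ which solves the degenerate elliptic equation
\[
\nabla \cdot (x_{n+1}^{1-2s} \nabla \tilde u) = 0 \text{ in } \R^{n+1}_+, \qquad \tilde u|_{x_{n+1}=0} = u.
\]
Under this extension, the equation $((-\Delta)^s + q)u = 0$ in $\Omega$ translates into the Robin-type boundary condition
\[
\lim_{x_{n+1}\to 0^+} x_{n+1}^{1-2s} \partial_{n+1} \tilde u = c_s \, q(x) \, u(x) \text{ on } \Omega \times \{0\},
\]
for a suitable constant $c_s \neq 0$. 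On the measurable set $E \subset \Omega$ with $|E|>0$ where $u$ vanishes, this means that both the Dirichlet trace $\tilde u$ and the weighted Neumann trace $\lim x_{n+1}^{1-2s} \partial_{n+1} \tilde u$ vanish on $E \times \{0\}$.

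Next I would reduce to a local statement near a point of density one. By the Lebesgue density theorem, almost every point $x_0 \in E$ is a density point of $E$. Fix such $x_0$, and work in a small half-ball $B_r^+(x_0,0) \subset \R^{n+1}_+$. The goal becomes to prove that $\tilde u$ vanishes to infinite order at $(x_0,0)$ in the appropriate sense, since once infinite order vanishing is established, the strong unique continuation principle for the Caffarelli--Silvestre operator (as in \cite{Rue15}) propagates the vanishing of $\tilde u$ throughout $\R^{n+1}_+$, and hence forces $u \equiv 0$ in $\R^n$.

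The central technical step is therefore to establish a boundary Carleman estimate for the operator $\nabla \cdot (x_{n+1}^{1-2s} \nabla \cdot)$ near a boundary point at which both the Dirichlet data and the weighted Neumann data vanish on a set of positive measure, and to show that this Carleman estimate, combined with the fact that $E$ has density one at $x_0$, forces $\tilde u$ to vanish to infinite order at $(x_0,0)$. Concretely, the strategy is: (i) derive a Carleman inequality with a weight $e^{\tau \phi}$ where $\phi$ is a radial boundary weight adapted to the degenerate measure $x_{n+1}^{1-2s} dx$; (ii) apply this inequality to a suitably truncated version of $\tilde u$ and estimate the commutator and boundary terms generated by the truncation; (iii) absorb the interior contribution of the potential-like Robin term using that $s \geq 1/4$ (so that $2s \geq 1/2$, which is exactly what is needed to control the $L^\infty$ Robin coefficient through a trace/multiplication estimate in the fractional Sobolev scale); and (iv) exploit the density-one property of $E$ at $x_0$ to show that the remainders are negligible compared with the Carleman weight, and let $\tau \to \infty$ to obtain infinite order vanishing.

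The hard part, and the reason the paper devotes an appendix to it, is precisely the derivation of the Carleman estimate in (i) together with its compatibility with the vanishing Robin condition. The degenerate weight $x_{n+1}^{1-2s}$ produces additional boundary terms and a non-standard conjugated operator, and the restriction $s\geq 1/4$ is sharp for the argument because the $L^\infty$ Robin coefficient is only controllable by the trace inequality in this range. Once the Carleman estimate is in place, the remaining density-point and blow-up/propagation steps are essentially formal.
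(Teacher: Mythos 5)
Your high-level skeleton is right: the Caffarelli--Silvestre extension converts the problem into a degenerate Robin boundary value problem, one localizes at a density-one point of $E$, the restriction $s \geq 1/4$ is what is needed to absorb the $L^\infty$ Robin coefficient, and a Carleman inequality for $\nabla \cdot (x_{n+1}^{1-2s}\nabla \cdot)$ is the technical engine. However, your step (iv) contains a real gap. You claim that combining the Carleman estimate with the density-one property and ``letting $\tau \to \infty$'' produces infinite-order vanishing at $(x_0,0)$, but vanishing of $u$ on a set of positive measure does not enter a Carleman estimate as a favorable weight on the right-hand side, so sending $\tau \to \infty$ cannot by itself yield infinite-order vanishing. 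The positive-measure information must first be converted into a quantitative \emph{smallness} of the boundary trace relative to the bulk norm, and that conversion is a separate argument that you have not sketched.

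The missing pieces are the following. First, the Carleman estimate is used to derive a \emph{doubling inequality} for the extension $\tilde u$ on half-balls (Lemma \ref{lem:doubling}), together with a companion gradient bound (Lemma \ref{cor:comp}); this is what the Carleman estimate actually buys you, not infinite-order vanishing. Second, the density-one property is exploited through H\"older's inequality combined with the fractional Sobolev trace/embedding: one writes $\|u\|_{L^2(B_r')} = \|u\|_{L^2(B_r' \cap E^c)} \leq |B_r' \cap E^c|^{s/n}\|u\|_{L^{2^*(s)}(B_r')}$ and controls the $L^{2^*(s)}$ norm via the trace inequality, doubling and the gradient bound, which yields, for every $\epsilon>0$, the estimate $\|u\|_{L^2(B_r'(x_0))} \leq \epsilon\, r^{s-1}\|x_{n+1}^{(1-2s)/2}u\|_{L^2(B_r^+(x_0))}$ for all $r$ small depending on $\epsilon$. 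Third, the conclusion is reached not by invoking strong unique continuation for $\tilde u$, but by a \emph{blow-up argument}: rescale around $(x_0,0)$, use doubling and Rellich compactness in the weighted $H^1$ space to extract a limit $u_0$ normalized by $\|x_{n+1}^{(1-2s)/2}u_0\|_{L^2(B_1^+)}=1$, note that the rescaled Robin coefficient $\sigma^{2s}q(x_0 + \sigma\cdot)$ vanishes in the limit so that $u_0$ satisfies the pure Neumann problem, and observe that the smallness estimate forces $u_0 = 0$ on $B_1'$; then \emph{weak} UCP for the Caffarelli--Silvestre Neumann problem gives $u_0 \equiv 0$, contradicting the normalization. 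In short, what stands between your Carleman estimate and the conclusion is the doubling inequality, the H\"older/Sobolev step that actually uses $|E|>0$, and the blow-up/compactness step, none of which appear in your proposal; as written, your step (iv) would fail.
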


In order to prove this result, we rely on unique continuation arguments for local equations. To this end,
we recall that the nonlocal Schr\"odinger equation \eqref{eq:schr} can also be ``localized" by means of the Caffarelli-Silvestre extension. More precisely, for any $U \subset \R^{n+1}_+$ we set
\begin{align*}
H^{1}(U, x_{n+1}^{1-2s}):= \{v \in \mathcal{D}'(U): \|x_{n+1}^{\frac{1-2s}{2}} v\|_{L^2(U)} + \|x_{n+1}^{\frac{1-2s}{2}}\nabla v\|_{L^2(U)}<\infty \}.
\end{align*}
Phrased in this notation, the article \cite{CS07} shows that
for any $u\in H^{s}(\R^n)$, the unique solution $\tilde{u}\in H^{1}(\R^{n+1}_+, x_{n+1}^{1-2s})$ of
\begin{align*}
\nabla \cdot x_{n+1}^{1-2s} \nabla \tilde{u} & = 0 \mbox{ in } \R^{n+1}_+,\\
\tilde{u} & = u \mbox{ on } \R^n \times \{0\},
\end{align*}
satisfies $c_{n,s}\lim\limits_{x_{n+1}\rightarrow 0} x_{n+1}^{1-2s}\p_{n+1} \tilde{u} = (-\D)^s u$ (as a limit in $H^{-s}(\R^n)$) for some constant $c_{n,s} \neq 0$. 
Hence, \eqref{eq:schr} can be viewed as the following Neumann (or Robin) problem: 
\begin{align}
\label{eq:Neumann}
\begin{split}
\nabla \cdot x_{n+1}^{1-2s} \nabla \tilde{u} & = 0 \mbox{ in } \R^{n+1}_+,\\
c_{n,s} \lim\limits_{x_{n+1} \rightarrow 0} x_{n+1}^{1-2s} \p_{n+1} \tilde{u} & =  -q \tilde{u} \mbox{ on } \Omega \times \{0\}.
\end{split}
\end{align}
Proposition \ref{prop:mUCP} will follow if we can show that any solution $\tilde{u}$, whose Dirichlet data vanishes in a set of positive measure and whose Robin data vanishes on an open subset of the boundary, must be identically zero. This is close to the boundary unique continuation results for the standard Laplacian, see e.g.\ \cite{AdolfssonEscauriaza, TaoZhang}, which correspond to the case $s=1/2$ for Dirichlet or Neumann data (but not Robin data). As in these works, we will base our proof on certain boundary doubling estimates for the solution $\tilde{u}$.

With slight abuse of notation, in the sequel, we will not distinguish between $\tilde{u}$ and $u$ and will use the same symbol both for the Caffarelli-Silvestre extension and for its boundary values.

\subsection{Auxiliary results}
\label{sec:aux_1}

We recall several auxiliary results which will be needed in the proof of Proposition \ref{prop:mUCP}. Most of these (or slight variations of these) can be found in \cite{Rue15} and \cite{RS17a}. If $x_0 \in \R^n$, we will identify $x_0$ with $(x_0, 0) \in \R^{n+1}$ and use the notation 
\[
B_r^+(x_0) = \{ x \in \R^{n+1}_+ \,:\, |x-x_0| < r \}, \qquad B_r'(x_0) = \{ x' \in \R^n \,:\, |x'-x_0| < r \}.
\]
If $x_0=0$ we will just write $B_r^+$ and $B_r'$.

We first recall Caccioppoli's inequality for the Caffarelli-Silvestre extension.

\begin{lem}[Caccioppoli]
\label{lem:Cacc}
Let $s\in (0,1)$ and $r > 0$. Let $u\in H^{1}(B_{4r}^+, x_{n+1}^{1-2s})$ be a solution of
\begin{align*}
\nabla \cdot x_{n+1}^{1-2s} \nabla u & = 0 \mbox{ in } B_{4r}^+.
\end{align*}
Assume that $u, \lim\limits_{x_{n+1}\rightarrow 0} x_{n+1}^{1-2s}\p_{n+1} u \in L^2(B_{4r}')$.
Then there exists $C = C_{n,s} > 0$ such that 
\begin{align*}
\|x_{n+1}^{\frac{1-2s}{2}} \nabla u\|_{L^2(B_r^+)}
\leq C r^{-1} \|x_{n+1}^{\frac{1-2s}{2}} u\|_{L^2(B_{2r}^+)} + C \|u\|_{L^2(B_{2r}')}^{1/2} \|\lim\limits_{x_{n+1}\rightarrow 0} x_{n+1}^{1-2s} \p_{n+1} u\|_{L^2(B_{2r}')}^{1/2}.
\end{align*}
\end{lem}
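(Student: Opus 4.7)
The plan is to prove this by a standard Caccioppoli-style computation, adapted to the degenerate weight $x_{n+1}^{1-2s}$ and tracking the boundary contribution at $\{x_{n+1}=0\}$. The degeneracy is mild (the weight is in the Muckenhoupt class $A_2$), so the usual testing argument with a cutoff function should go through.

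First I would fix a smooth cutoff $\eta \in C^\infty_c(B_{3r})$ with $\eta \equiv 1$ on $B_r$, $0 \leq \eta \leq 1$, and $|\nabla \eta| \leq C/r$, chosen so that $\eta$ does not depend on the sign of $x_{n+1}$ (so it makes sense as a cutoff on $B_{2r}^+$). Then I would test the equation $\nabla \cdot (x_{n+1}^{1-2s} \nabla u) = 0$ against $\phi = \eta^2 u$, which is an admissible test function since $u \in H^1(B_{4r}^+, x_{n+1}^{1-2s})$ and, by assumption, the relevant boundary traces lie in $L^2(B_{4r}')$. The divergence theorem produces
\begin{equation*}
\int_{B_{2r}^+} \eta^2 \, x_{n+1}^{1-2s} |\nabla u|^2 \, dx
\;=\; -2 \int_{B_{2r}^+} \eta u \, x_{n+1}^{1-2s} \nabla \eta \cdot \nabla u \, dx
\;-\; \int_{B_{2r}'} \eta^2 u \, \bigl(\lim_{x_{n+1}\to 0} x_{n+1}^{1-2s}\partial_{n+1} u\bigr) \, dx',
\end{equation*}
where the flat boundary term comes from the outward normal $-e_{n+1}$ on $\{x_{n+1}=0\}$ and the integrals over the upper hemisphere vanish because $\eta$ is compactly supported in $B_{3r}$.

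Next, I would estimate the two right-hand side terms. For the interior cross term, an application of Young's inequality gives
\begin{equation*}
2 \left| \int \eta u \, x_{n+1}^{1-2s} \nabla \eta \cdot \nabla u \right|
\;\leq\; \tfrac{1}{2} \int \eta^2 x_{n+1}^{1-2s} |\nabla u|^2
\;+\; 2 \int |\nabla \eta|^2 \, x_{n+1}^{1-2s} u^2,
\end{equation*}
so that the first half can be absorbed into the left, and the second half is controlled by $C r^{-2} \|x_{n+1}^{(1-2s)/2} u\|_{L^2(B_{2r}^+)}^2$ using $|\nabla\eta|\leq C/r$. For the boundary integral, Cauchy--Schwarz together with $0\leq\eta\leq 1$ bounds it by $\|u\|_{L^2(B_{2r}')} \, \|\lim_{x_{n+1}\to 0}x_{n+1}^{1-2s}\partial_{n+1} u\|_{L^2(B_{2r}')}$. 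Combining everything yields
\begin{equation*}
\int_{B_r^+} x_{n+1}^{1-2s}|\nabla u|^2 \, dx
\;\leq\; C r^{-2} \int_{B_{2r}^+} x_{n+1}^{1-2s} u^2 \, dx
\;+\; C \|u\|_{L^2(B_{2r}')} \, \bigl\|\lim_{x_{n+1}\to 0} x_{n+1}^{1-2s}\partial_{n+1} u\bigr\|_{L^2(B_{2r}')},
\end{equation*}
and taking a square root together with the elementary $\sqrt{a+b}\leq \sqrt{a}+\sqrt{b}$ produces the stated inequality, with the $1/2$ exponents on the boundary factors.

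The only non-routine point is justifying the integration by parts rigorously, since $\phi = \eta^2 u$ need not be smooth and the weight degenerates at $\{x_{n+1}=0\}$. The standard way to handle this is to integrate on the shifted domain $\{x_{n+1}>\delta\}$, where the weight and test function are smooth and classical integration by parts applies, and then pass to the limit $\delta \to 0$; the hypothesis that both $u$ and $\lim x_{n+1}^{1-2s}\partial_{n+1} u$ belong to $L^2(B_{4r}')$ is exactly what is needed to make the boundary trace well defined and to justify the limit. This is the only place where one needs to be careful; the rest is Young's inequality and Cauchy--Schwarz.
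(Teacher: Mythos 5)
Your proposal is correct and takes essentially the same route as the paper: test against $\eta^2 u$, absorb the interior cross term via Young's inequality, and control the flat boundary term by Cauchy--Schwarz, which is exactly the modification of \cite[Lemma 4.5]{RS17a} that the paper's one-line proof describes. The only bookkeeping slip is that the cutoff should be supported in $B_{2r}$ (not $B_{3r}$) so that the resulting integrals live over $B_{2r}^+$ and $B_{2r}'$ as in the stated bound.
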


\begin{proof}
The proof follows as in Lemma 4.5 in \cite{RS17a}. However instead of dealing with the boundary contributions by duality, we directly use the Cauchy-Schwarz inequality for them (in conjunction with the assumption that the boundary values and the weighted normal derivative at the boundary are $L^2$ functions).
\end{proof}

Next we recall some trace and Sobolev estimates for functions in weighted $H^1$ spaces.

\begin{lem}[Trace estimates]
\label{lem:trace}
Let $s\in (0,1)$ and let $r > 0$. Let $u\in H^{1}(B_{4r}^+, x_{n+1}^{1-2s})$. Then $u|_{B_{r}'} \in H^s(B_r')$, and there is $C = C_{n,s} > 0$ such that 
\begin{align*}
\| u\|_{L^2(B_{r}')}
\leq C (r^{s} \|x_{n+1}^{\frac{1-2s}{2}} \nabla u\|_{L^2(B_{2r}^+)} 
+ r^{s-1} \|x_{n+1}^{\frac{1-2s}{2}} u\|_{L^2(B_{2r}^+)}).
\end{align*}
Moreover, if $n\geq 2$, or $n=1$ and $s\in (0,1/2)$, there is $C = C_{n,s} > 0$ such that 
\[
\|u\|_{L^{2^*(s)}(B_r')}
\leq C(
\|x_{n+1}^{\frac{1-2s}{2}} \nabla u\|_{L^2(B_{2r}^+)}
+r^{-1} \|x_{n+1}^{\frac{1-2s}{2}} u\|_{L^2(B_{2r}^+)}),
\]
where $2^{\ast}(s)= \frac{2n}{n-2s} \in (1,\infty)$.
\end{lem}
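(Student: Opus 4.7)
The plan is to reduce both estimates to the unit scale $r = 1$ by dilation, then appeal to the trace theory for the Muckenhoupt $A_2$ weight $x_{n+1}^{1-2s}$ on the half-space (it is $A_2$ throughout $s \in (0,1)$, which is what makes the whole machinery available). Setting $u_r(x) := u(rx)$ and changing variables, I would verify
\begin{align*}
\|u_r\|_{L^2(B_1')} &= r^{-n/2}\|u\|_{L^2(B_r')}, \\
\|u_r\|_{L^{2^*(s)}(B_1')} &= r^{s-n/2}\|u\|_{L^{2^*(s)}(B_r')}, \\
\|x_{n+1}^{(1-2s)/2} u_r\|_{L^2(B_2^+)} &= r^{s-(n+2)/2}\|x_{n+1}^{(1-2s)/2} u\|_{L^2(B_{2r}^+)}, \\
\|x_{n+1}^{(1-2s)/2}\nabla u_r\|_{L^2(B_2^+)} &= r^{s-n/2}\|x_{n+1}^{(1-2s)/2}\nabla u\|_{L^2(B_{2r}^+)},
\end{align*}
so that each of the two claimed inequalities reduces to its $r=1$ version.

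For the plain $L^2$ trace bound at $r=1$, I would argue by density on $C^\infty(\overline{B_2^+})$. Fixing a cutoff $\chi \in C_c^\infty([0,\infty))$ with $\chi(0)=1$ and $\supp\chi \subset [0,\sqrt{3})$, the identity
\[
u(x',0) = -\int_0^{\infty}\p_{n+1}\bigl[\chi(x_{n+1})\,u(x',x_{n+1})\bigr]\,dx_{n+1}
\]
combined with the Cauchy--Schwarz inequality taken with respect to the measure $x_{n+1}^{1-2s}\,dx_{n+1}$ yields, using that $\int_0^{\sqrt{3}} x_{n+1}^{2s-1}\,dx_{n+1} < \infty$ precisely because $s > 0$,
\[
|u(x',0)|^2 \leq C_s \int_0^{\sqrt{3}} x_{n+1}^{1-2s}\bigl(|\nabla u|^2 + |u|^2\bigr)(x',x_{n+1})\,dx_{n+1}.
\]
Integrating over $x' \in B_1'$ and extending from $C^\infty(\overline{B_2^+})$ to $H^1(B_2^+, x_{n+1}^{1-2s})$ by density (valid because the weight is $A_2$) produces the unit-scale estimate, and the scaling above delivers the bound for general $r$.

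For the $L^{2^*(s)}$ estimate, the same cutoff argument can be refined to control the Gagliardo seminorm of $u(\cdot,0)$ on $B_1'$ by the weighted gradient and $L^2$ norms on $B_2^+$, giving the classical half-space trace theorem
\[
T : H^1(B_2^+, x_{n+1}^{1-2s}) \to H^s(B_1')
\]
that underlies the Caffarelli--Silvestre extension identity $\|(-\D)^{s/2} v\|_{L^2(\R^n)}^2 \simeq \int_{\R^{n+1}_+} x_{n+1}^{1-2s}|\nabla \tilde v|^2\,dx$. Composing $T$ with the standard fractional Sobolev embedding $H^s(B_1') \hookrightarrow L^{2^*(s)}(B_1')$, which is available precisely when $n > 2s$ (i.e.\ $n \geq 2$, or $n = 1$ with $s < 1/2$), produces the sharp exponent $2^*(s) = 2n/(n-2s)$ at the unit scale, and rescaling closes the proof.

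The main technical point throughout is the clean handling of the Muckenhoupt weight: density of smooth functions, continuity of the trace into the fractional space $H^s$, and the absorption of the boundary term in Cauchy--Schwarz all hinge on $|1-2s|<1$. The borderline case $n = 1$, $s \geq 1/2$ is excluded from the second estimate precisely because $2^*(s) = 2n/(n-2s)$ then fails to define a finite admissible Sobolev exponent.
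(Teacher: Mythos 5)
Your proposal is correct and largely parallels the paper's own proof: both reduce to $r=1$ by the same dilation (your scaling exponents check out, and they do reproduce the stated $r$-dependence after multiplying through), and both route the $L^{2^*(s)}$ bound through the weighted trace theorem $H^1(\R^{n+1}_+, x_{n+1}^{1-2s}) \to H^s(\R^n)$ followed by fractional Sobolev embedding, with the exclusion $n=1$, $s\geq 1/2$ explained identically. The one genuine difference is in the $L^2$ trace bound: the paper simply takes a cutoff $\eta$ and invokes the $H^s$ trace theorem from \cite[Lemma 4.4]{RS17a} plus $H^s \hookrightarrow L^2$, whereas you give a self-contained argument via the fundamental theorem of calculus in $x_{n+1}$ and Cauchy--Schwarz against the weight, using that $\int_0^{\sqrt 3} x_{n+1}^{2s-1}\,dx_{n+1}<\infty$. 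Your elementary route is perfectly sound and arguably more transparent for the $L^2$ case, but it does not actually save the citation, since the $H^s$-valued trace estimate is still needed for the second inequality; your remark that it follows by "refining the cutoff argument to control the Gagliardo seminorm" is asserted rather than proved, and in the paper this is precisely the external reference. Everything else (the $A_2$ status of the weight for $s\in(0,1)$, density of smooth functions, the support bookkeeping $|x'|<1$, $x_{n+1}<\sqrt 3$ putting you inside $B_2^+$) is handled correctly.
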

\begin{proof}
The estimates are scaling invariant, and thus it is enough to prove them when $r=1$. Let $\eta$ be a cut-off function which is supported in $B_2^+$ and is equal to one near $\overline{B}_1^+$. Then $\eta u \in H^1(\R^{n+1}_+, x_{n+1}^{1-2s})$, and by the trace theorem for this space (see e.g.\ \cite[Lemma 4.4]{RS17a}) one has $\eta u|_{\R^n} \in H^s(\R^n)$ and 
\[
\|u\|_{L^2(B_1')} \leq \|u\|_{H^s(B_1')} \leq C_{n,s} (\|x_{n+1}^{\frac{1-2s}{2}} \nabla u\|_{L^2(B_2^+)} 
+ \|x_{n+1}^{\frac{1-2s}{2}} u\|_{L^2(B_{2}^+)}).
\]
The Sobolev embedding also yields $\|u\|_{L^{2^*(s)}(B_1')} \leq C_{n,s} \|u\|_{H^s(B_1')}$ unless $n=1$ and $s \in [1/2,1)$. The result follows.
\end{proof}

Next we recall a (slight extension of a) result from \cite{Rue15}. The constants from this point on will be denoted by $M$ and they will in general depend on the solution $u$.

\begin{lem}[Doubling]
\label{lem:doubling}
Let $\Omega \subset \R^n$ be a bounded open set and let $q \in L^{\infty}(\Omega)$. Let $s\in [\frac{1}{4},1)$ and assume that $u \in H^s(\R^n)$ is a solution to \eqref{eq:schr}. Then there exists $r_0=r_0(\|q\|_{L^{\infty}(\Omega)})>0$ such that for each $x_0 \in \Omega$ there exists a constant $M=M(\dist(x_0, \partial \Omega), n,s,u)>0$ such that for all $r\in (0,r_0\dist(x_0,\partial \Omega)/10)$
\begin{align*}
\|x_{n+1}^{\frac{1-2s}{2}} u\|_{L^2(B_{2r}^+(x_0))}
+ r \|x_{n+1}^{\frac{1-2s}{2}} \nabla u\|_{L^2(B_{2r}^+(x_0))} 
\leq M (\|x_{n+1}^{\frac{1-2s}{2}} u\|_{L^2(B_r^+(x_0))}
+ r \|x_{n+1}^{\frac{1-2s}{2}} \nabla u\|_{L^2(B_r^+(x_0))} ).
\end{align*}
\end{lem}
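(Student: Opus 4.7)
The plan is to prove doubling via a Carleman estimate for the Caffarelli-Silvestre extension, following the framework of \cite{Rue15}. Writing $\tilde{u}$ for the extension, it solves the degenerate elliptic equation $\nabla\cdot(x_{n+1}^{1-2s}\nabla\tilde{u})=0$ in $\R^{n+1}_+$ with the Robin-type boundary condition $c_{n,s}\lim_{x_{n+1}\to 0^+}x_{n+1}^{1-2s}\partial_{n+1}\tilde{u}=-q\tilde{u}$ on $\Omega\times\{0\}$. By translating I may assume $x_0=0$ and work on half-balls $B_r^+$ for $r$ so small that the boundary trace $B'_{Kr}$ stays well inside $\Omega$; the smallness threshold $r_0=r_0(\|q\|_{L^{\infty}(\Omega)})$ is dictated by the Carleman argument below.

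The heart of the proof is the Carleman estimate proved in Appendix \ref{sec:append}, which has the schematic form
\[
\tau^{3}\|e^{\tau\phi}x_{n+1}^{\frac{1-2s}{2}}w\|_{L^{2}(\R^{n+1}_+)}^{2}+\tau\|e^{\tau\phi}x_{n+1}^{\frac{1-2s}{2}}\nabla w\|_{L^{2}(\R^{n+1}_+)}^{2}\leq C\bigl(\|e^{\tau\phi}x_{n+1}^{\frac{1-2s}{2}}\mathcal{L}w\|_{L^{2}(\R^{n+1}_+)}^{2}+\mathcal{B}_\tau(w)\bigr),
\]
with $\mathcal{L}w=x_{n+1}^{2s-1}\nabla\cdot(x_{n+1}^{1-2s}\nabla w)$, a suitable radial weight $\phi$ (of logarithmic type near the origin), and a boundary term $\mathcal{B}_\tau(w)$ into which the Robin datum $-q\tilde{u}$ feeds. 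The critical contribution $\|q\|_{L^{\infty}}\|e^{\tau\phi}w\|_{L^{2}(B'_R)}^{2}$ is absorbed into the bulk via the trace estimate of Lemma \ref{lem:trace}; this is where the hypothesis $s\geq 1/4$ enters, since for such $s$ the trace provides a boundary bound in terms of the weighted $H^{1}$ norm with enough $\tau$-gain to be dominated once $\tau\geq\tau_0(\|q\|_{L^{\infty}})$. Inserting $w=\eta\tilde{u}$ into this estimate, with $\eta$ a cut-off equal to one on $B_{2r}^+\setminus B_r^+$ and supported in $B_{Kr}^+\setminus B_{r/2}^+$, converting gradient contributions from $\nabla\eta$ to $L^{2}$ norms via Lemma \ref{lem:Cacc}, and optimising in $\tau$, yields a boundary three-balls inequality
\[
\|x_{n+1}^{\frac{1-2s}{2}}\tilde{u}\|_{L^{2}(B_{2r}^+)}\leq C\,\|x_{n+1}^{\frac{1-2s}{2}}\tilde{u}\|_{L^{2}(B_r^+)}^{\alpha}\,\|x_{n+1}^{\frac{1-2s}{2}}\tilde{u}\|_{L^{2}(B_{Kr}^+)}^{1-\alpha}
\]
for some $\alpha\in(0,1)$ and $K>2$, uniformly in $r\in(0,r_0\dist(x_0,\partial\Omega)/10)$.

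The passage from the three-balls inequality to doubling is the standard chaining argument: since $\tilde{u}\not\equiv 0$ (otherwise the lemma is trivial), there is a macroscopic radius $R_\ast$, depending on $\dist(x_0,\partial\Omega)$, at which $\|x_{n+1}^{\frac{1-2s}{2}}\tilde{u}\|_{L^{2}(B_{R_\ast}^+)}>0$. Applying the three-balls inequality along a geometric chain of scales from $2r$ up to $R_\ast$, and bounding the top-scale norm by $\|\tilde{u}\|_{H^{s}(\R^{n})}<\infty$, produces the ratio bound $\|x_{n+1}^{\frac{1-2s}{2}}\tilde{u}\|_{L^{2}(B_{Kr}^+)}\leq M_0\|x_{n+1}^{\frac{1-2s}{2}}\tilde{u}\|_{L^{2}(B_r^+)}$, with $M_0$ depending on $\dist(x_0,\partial\Omega),n,s$ and $\tilde{u}$. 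Combined with the three-balls inequality this gives the $L^{2}$ doubling for $\tilde{u}$; a final application of Lemma \ref{lem:Cacc} on $B_{4r}^+$ upgrades this to control of the weighted gradient on $B_{2r}^+$, producing the stated inequality.

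I expect the main obstacle to be the Carleman estimate itself with a boundary term capable of absorbing an $L^{\infty}$ Robin coefficient. The threshold $s\geq 1/4$ is forced at precisely this step, since for $s<1/4$ the trace estimate no longer supplies enough $\tau$-gain to dominate $\|q\|_{L^{\infty}}\|e^{\tau\phi}w\|_{L^{2}(B'_R)}^{2}$, and one would need radial $C^{1}$ regularity of $q$ to close the absorption (as noted in the introduction). Once the Carleman inequality is in place, the chaining and the Caccioppoli-based clean-up follow the pattern established in \cite{Rue15}.
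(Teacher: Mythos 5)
Your broad framing (Caffarelli--Silvestre extension, Carleman estimate with a convexified logarithmic weight, absorption of the $L^\infty$ Robin term via the trace inequality and the threshold $s\geq 1/4$) matches the paper, and you correctly identify why $s\geq 1/4$ is needed. However, the route you take from the Carleman estimate to doubling has a genuine gap, and the treatment of the gradient part of the estimate also departs from what is actually needed.

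The gap is in the step ``three-balls $\Rightarrow$ doubling by chaining.'' A three-balls inequality of the form
$\|u\|_{B_{2r}^+}\leq C\|u\|_{B_r^+}^{\alpha}\|u\|_{B_{Kr}^+}^{1-\alpha}$
with a \emph{fixed} $\alpha\in(0,1)$, $K>2$, and a $u$-independent constant $C$, does not yield a uniform doubling bound by chaining up to a macroscopic radius: in log-of-norm variables the inequality is a convexity-type relation whose iteration controls propagation of smallness (it bounds intermediate norms in terms of the small-ball norm and the macroscopic norm) but does \emph{not} produce a uniform upper bound on the ratio $\|u\|_{B_{2r}^+}/\|u\|_{B_r^+}$ as $r\to 0$. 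One can see the obstruction already for $K=4$: writing $b_j=\log\|u\|_{B_{2^{j+1}r_0}^+}-\log\|u\|_{B_{2^jr_0}^+}$, the three-balls relation reads $\alpha b_j\leq\log C+(1-\alpha)b_{j+1}$, and iterating backward from a macroscopic scale over $N\sim\log(R_*/r)$ steps only controls $b_j$ when $(1-\alpha)/\alpha<1$; for general $\alpha\in(0,1)$ the geometric factor blows up, and even in the favourable regime the argument produces a bound growing in $N$. The paper therefore does something different and more precise: rather than optimizing in $\tau$ to extract a $u$-independent three-balls inequality, it fixes $\tau=\tau_0$ \emph{depending on the solution $u$}, chosen so large that the macroscopic right-hand-side term
$C\tau^{-1/2}e^{\tau\psi(3)}\|u\|_{H^1(B_4^+,x_{n+1}^{1-2s})}$
is absorbed into the left-hand-side annulus term $e^{\tau\psi(5/2)}\|u\|_{H^1(B_{5/2}^+\setminus B_2^+,x_{n+1}^{1-2s})}$; this uses the nonvanishing of $u$ on that annulus (by interior unique continuation for the uniformly elliptic equation in $\{x_{n+1}>0\}$). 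After the absorption the Carleman estimate itself directly gives
$\|u\|_{H^1_r(B_{4r}^+)}\leq Ce^{\tau_0(\psi(r)-\psi(4r))}\|u\|_{H^1_r(B_{2r}^+)}$,
and a short computation shows $\psi(r)-\psi(4r)$ is bounded uniformly in $r$, giving doubling in one step. This $u$-dependent choice of $\tau_0$ is exactly why the constant $M$ in the lemma depends on $u$, and it is the step your chaining proposal does not recover.

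Two further points on the gradient part. First, you propose to ``convert gradient contributions from $\nabla\eta$ to $L^2$ norms via Lemma \ref{lem:Cacc}'' before closing the estimate; the paper explicitly does the opposite, keeping the gradient terms in the small balls and upgrading the Carleman estimate (Proposition \ref{prop:Carl}) to carry a small-ball gradient term on the left, precisely so that the resulting inequality is a doubling estimate for $\|x_{n+1}^{(1-2s)/2}u\|_{L^2}+r\|x_{n+1}^{(1-2s)/2}\nabla u\|_{L^2}$ and not merely for the $L^2$ part. Second, your closing step of recovering the gradient term afterwards via Caccioppoli would feed in the Robin term $\|q\|_{L^\infty}^{1/2}\|u\|_{L^2(B')}$, which via Lemma \ref{lem:trace} reintroduces $\|x_{n+1}^{(1-2s)/2}\nabla u\|_{L^2}$ at a larger scale; in the paper that absorption is done \emph{after} the doubling lemma, in Lemma \ref{cor:comp}, and uses the gradient part of the doubling estimate you are trying to prove, so as written the plan is at risk of circularity. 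Keeping the gradient term in the Carleman estimate is what makes both difficulties disappear.
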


\begin{proof}
The proof of Lemma \ref{lem:doubling} follows along the same lines as the proof of Proposition 4.1 in \cite{Rue15} with two slight modifications: As the Schr\"odinger equation is only assumed to hold on the bounded domain $\Omega$, we have to choose the cut-off function $\eta$ such that it is supported in $\Omega$. This gives rise to the dependence on the distance to the boundary. Secondly, in the Carleman estimate, we keep the gradient terms in the small balls (instead of estimating them by means of Caccioppoli's inequality). This necessitates a slight upgrade of the Carleman estimate from \cite[Proposition 4.1]{Rue15}, which uses ideas from \cite[Remark 4]{Rue15}. We present a self-contained argument for the upgraded Carleman estimate in the appendix.

For self-containedness, we present the details. Without loss of generality (by scaling and translating we can always achieve this), we may also assume that $B_4' \subset \Omega$ and $x_0=0$.\\

\emph{Step 1: The Carleman estimate.}
We begin by recalling the Carleman estimate from \cite[Proposition 4.1]{Rue15}, in the upgraded form given in Proposition \ref{prop:Carl}, which allows us to treat $L^{\infty}$ potentials. For a solution $w \in H^{1}(B_5^+, x_{n+1}^{1-2s})$ with $\supp(w) \subset \overline{B_{4}^+ \setminus B_{r_1}^+}$ and $r_1 \in (0,1)$ of
\begin{align}
\label{eq:inhom}
\begin{split}
\nabla \cdot x_{n+1}^{1-2s} \nabla w & = f \mbox{ in } B_5^+,\\
\lim\limits_{x_{n+1}\rightarrow 0} x_{n+1}^{1-2s} \p_{n+1} w & = Vw \mbox{ on } B_5',
\end{split}
\end{align}
for parameters $\tau \geq \tau_0 \geq 1$ and for the weight function $\phi(x):= \psi(|x|)$ with
\begin{align*}
\psi(r) = -\ln(r) + \frac{1}{10}\left( \ln(r) \arctan(\ln(r))- \frac{1}{2} \ln(1+ \ln^2(r))  \right),
\end{align*}
we have for any $r_2\in (2 r_1,3)$
\begin{align}
\label{eq:Carl}
\begin{split}
& \tau^{\frac{1}{2}}\ln(r_2/r_1)^{-1}\|e^{\tau \phi} x_{n+1}^{\frac{1-2s}{2}} |x|^{-1} w \|_{L^2(B_{r_2}^+)}
+ \tau^{-\frac{1}{2}}\ln(r_2/r_1)^{-1} \|e^{\tau \phi} x_{n+1}^{\frac{1-2s}{2}} \nabla w\|_{L^2(B_{r_2}^+)}\\
& + \tau^s \|e^{\tau \phi} (1+\ln^2(|x|))^{-1/2} |x|^{-s} w\|_{L^2(B_5')}\\
&+ \tau \|e^{\tau \phi} (1+\ln^2(|x|))^{-1/2} x_{n+1}^{\frac{1-2s}{2}}|x|^{-1} w \|_{L^2(B_5^+)}
+  \|e^{\tau \phi}  (1+\ln^2(|x|))^{-1/2} x_{n+1}^{\frac{1-2s}{2}} \nabla w \|_{L^2(B_5^+)}\\
&\leq C \tau^{-\frac{1}{2}} \| e^{\tau \phi} |x| x_{n+1}^{\frac{2s-1}{2}} f\|_{L^2(B_5^+)} + \tau^{\frac{1-2s}{2}} \|e^{\tau \phi} |x|^{s} V w\|_{L^2(B_5')}.
\end{split}
\end{align}
This Carleman estimate is obtained by using the ideas which are explained in \cite[Remark 4]{Rue15}. These were already used in \cite[Proposition 4.1]{Rue15} and \cite[Remark 3.8, Corollary 3.11 and Proposition 4.9]{KRS16a}, in order to derive doubling inequalities. While in the setting of \cite[Proposition 4.1]{Rue15} and \cite[Proposition 4.9]{KRS16a} we only needed this on the level of $w$, we here also need it on the level of the gradient $\nabla w$. We give a new, self-contained proof of the Carleman estimate in the appendix which is similar to the splitting arguments that have been used in \cite{KLW16} and \cite{KRS16b}.\\

\emph{Step 2: Application of the Carleman inequality.}
In order to turn our solution $u$ from \eqref{eq:Neumann} (recall that we write $u$ instead of $\tilde{u}$) into a form in which the estimate \eqref{eq:Carl} can be applied, we multiply $u$ with a radial cut-off function $\eta\geq 0$ with $\supp(\eta) \subset \overline{B_{4}^+ \setminus B_{r}^+}$, $\eta = 1$ in $B_{3}^+ \setminus B_{2r}^+$, and $|\nabla \eta| \leq \frac{C}{r}$ in $B_{2r}^+ \setminus B_r^+$. Here $r$ is any number with $0 < r < 1$, and we will track the dependence of the constants on $r$. We note that the function $w:= \eta u$ satisfies \eqref{eq:inhom} with
\begin{align}
\label{eq:inhom_f}
f(x) = 2 x_{n+1}^{1-2s} \nabla \eta \cdot \nabla u + u \nabla \cdot x_{n+1}^{1-2s} \nabla \eta,
\end{align}
and that by the radial form of $\eta$ we in particular have
\begin{align*}
c_{n,s}\lim\limits_{x_{n+1}\rightarrow 0} x_{n+1}^{1-2s} \p_{n+1} w = -q w \mbox{ on } B_5',
\end{align*}
i.e.\ we do not catch $\eta$ derivatives in the normal derivative, and $V$ in \eqref{eq:inhom} is given by $V = -\frac{1}{c_{n,s}} q$.

We may hence apply the Carleman estimate \eqref{eq:Carl} to $w = \eta u$. In the case $s > 1/4$, we can absorb the boundary term on the right hand side into the left hand side if $\tau \geq \tau_0$ with $\tau_0$ sufficiently large. The same can be done when $s = 1/4$ by using the $(1+\ln^2(|x|))^{-1/2}  |x|^{-s}$ and $|x|^{s}$ weights in the boundary terms, provided that we replace $\eta$ by $\eta(\,\cdot\,/r_0)$ where $r_0$ is a small constant depending on $\|q\|_{L^{\infty}}$. We assume that this has been done, and we can thus drop all boundary contributions.
This turns \eqref{eq:Carl} into  
\begin{align*}
\begin{split}
& \tau^{\frac{1}{2}} \ln(r_2/r_1)^{-1} \|e^{\tau \phi} x_{n+1}^{\frac{1-2s}{2}} |x|^{-1} w\|_{L^2(B_{r_2}^+)} + \tau^{-\frac{1}{2}} \ln(r_2/r_1)^{-1}  \|e^{\tau \phi} x_{n+1}^{\frac{1-2s}{2}} \nabla w \|_{L^2(B_{r_2}^+)}\\
& + \tau \|e^{\tau \phi} (1+\ln^2(|x|))^{-1/2} |x|^{-1} x_{n+1}^{\frac{1-2s}{2}} w \|_{L^2(B_4^+)}
+  \|e^{\tau \phi}  (1+\ln^2(|x|))^{-1/2} x_{n+1}^{\frac{1-2s}{2}} \nabla w \|_{L^2(B_4^+)}\\
&\leq C \tau^{-\frac{1}{2}} \|e^{\tau \phi}|x| x_{n+1}^{\frac{2s-1}{2}} f\|_{L^2(B_4^+)}.
\end{split}
\end{align*}
Next, plugging in the form of $f$ from \eqref{eq:inhom_f}, choosing $r_1=r$, $r_2=4r$ and using the support assumption of $\eta = \eta(|x|)$ (which in particular implies that $|\nabla \cdot x_{n+1}^{1-2s} \nabla \eta| \leq C_s x_{n+1}^{1-2s}$ in $B_{4}^+ \setminus B_{3}^+$ and $|\nabla \cdot x_{n+1}^{1-2s} \nabla \eta| \leq C_s x_{n+1}^{1-2s} r^{-2}$ in $B_{2r}^+ \setminus B_r^+$), we infer that 
\begin{align*}
\begin{split}
& \tau^{\frac{1}{2}}  \|e^{\tau \phi} x_{n+1}^{\frac{1-2s}{2}} |x|^{-1} u\|_{L^2(B_{4 r}^+\setminus B_{2r}^+)} 
+ \tau^{-\frac{1}{2}} \|e^{\tau \phi} x_{n+1}^{\frac{1-2s}{2}} \nabla u\|_{L^2(B_{4 r}^+\setminus B_{2r}^+)}\\
& + \tau \|e^{\tau \phi} (1+\ln^2(|x|))^{-1/2} x_{n+1}^{\frac{1-2s}{2}} |x|^{-1} u \|_{L^2(B_{\frac{5}{2}}^+\setminus B_2^+)}
+  \|e^{\tau \phi} (1+\ln^2(|x|))^{-1/2} x_{n+1}^{\frac{1-2s}{2}} \nabla u \|_{L^2(B_{\frac{5}{2}}^+ \setminus B_2^+)}\\
&\leq 4 C \tau^{-\frac{1}{2}} \| e^{\tau \phi} x_{n+1}^{\frac{1-2s}{2} } \nabla u \|_{L^2(B_4^+\setminus B_3^+)}
+ 4C \tau^{-\frac{1}{2}} \| e^{\tau \phi} x_{n+1}^{\frac{1-2s}{2}} u\|_{L^2(B_4^+ \setminus B_3^+)}\\
& \quad + 4C \tau^{-\frac{1}{2}} \|e^{\tau \phi} x_{n+1}^{\frac{1-2s}{2}} \nabla u \|_{L^2(B_{2r}^+ \setminus B_{r}^+)}
+ 4C \tau^{-\frac{1}{2}} r^{-1} \|e^{\tau \phi} x_{n+1}^{\frac{1-2s}{2}} u\|_{L^2(B_{2r}^+ \setminus B_{r}^+)}.
\end{split}
\end{align*}
Using the domain structure, the monotonicity properties of $\psi$ and the trivial estimate $\tau\geq 1$, 
this can be further estimated by 
\begin{align} \label{doubling_intermediate_estimate}
\begin{split}
&  \tau^{-\frac{1}{2}} e^{\tau \psi(4r)} r^{-1} \| u \|_{H^1_r(B_{4r}^+\setminus B_{2r}^+,x_{n+1}^{1-2s})}
 + e^{\tau \psi(\frac{5}{2})} \| u \|_{H^1(B_{\frac{5}{2}}^+\setminus B_2^+,x_{n+1}^{1-2s})}\\
&\leq 4 C \tau^{-\frac{1}{2}} e^{\tau \psi(3)}\| u \|_{H^1(B_4^+\setminus B_3^+, x_{n+1}^{1-2s})}
+ 4 C \tau^{-\frac{1}{2}} e^{\tau \psi(r)} r^{-1} \| u \|_{H^1_r(B_{2r}^+ \setminus B_{r}^+, x_{n+1}^{1-2s})}.
\end{split}
\end{align}
Here on $B_{4r}^+ \setminus B_{2r}^+$ (and similarly on $B_{2r}^+ \setminus B_{r}^+$) we have used the notation
\begin{align*}
\| u \|_{H^1_r(B_{4r}^+\setminus B_{2r}^+,x_{n+1}^{1-2s})}
:= \|x_{n+1}^{\frac{1-2s}{2}} u\|_{L^2(B_{4r}^+ \setminus B_{2r}^+)}
+ r\|x_{n+1}^{\frac{1-2s}{2}} \nabla u\|_{L^2(B_{4r}^+ \setminus B_{2r}^+)},
\end{align*}
in order to avoid always having to spell out the full norms.

Adding the term $\tau^{-\frac{1}{2}}e^{\tau \psi(4r)} r^{-1} \| u \|_{H^1_r(B_{2r}^+,x_{n+1}^{1-2s})}$ to both sides of \eqref{doubling_intermediate_estimate}, and using the fact that $e^{\tau \psi(4r)} \leq e^{\tau\psi(r)}$ for $\tau \geq 0$,
we further obtain
\begin{align}
\label{eq:Carl_2}
\begin{split}
&  \tau^{-\frac{1}{2}}e^{\tau \psi(4r)} r^{-1} \| u \|_{H^1_r(B_{4r}^+,x_{n+1}^{1-2s})}
 + e^{\tau \psi(\frac{5}{2})} \| u \|_{H^1(B_{\frac{5}{2}}^+\setminus B_2^+,x_{n+1}^{1-2s})}\\
&\leq C \tau^{-\frac{1}{2}} e^{\tau \psi(3)}\| u \|_{H^1(B_4^+, x_{n+1}^{1-2s})}
+ C \tau^{-\frac{1}{2}} e^{\tau \psi(r)} r^{-1} \|u \|_{H^1_r(B_{2r}^+ , x_{n+1}^{1-2s})}.
\end{split}
\end{align}
Next, we choose $\tau_0 \geq 1$, depending on the fixed function $u$, so large that for $\tau \geq \tau_0$ one has 
\begin{align*}
C \tau^{-\frac{1}{2}} e^{\tau \psi(3)}\| u \|_{H^1(B_4^+, x_{n+1}^{1-2s})} 
\leq \frac{1}{2} 
e^{\tau \psi(\frac{5}{2})} \| u \|_{H^1(B_{\frac{5}{2}}^+\setminus B_2^+,x_{n+1}^{1-2s})}.
\end{align*}
In fact, it is enough to arrange that for $\tau \geq \tau_0$, 
\[
e^{\tau(\psi(\frac{5}{2}) - \psi(3))} \tau^{1/2} \geq 2C \frac{\| u \|_{H^1(B_4^+, x_{n+1}^{1-2s})}}{\| u \|_{H^1(B_{\frac{5}{2}}^+\setminus B_2^+,x_{n+1}^{1-2s})}}.
\]
If $u$ is nontrivial, the denominator is nonzero by unique continuation for uniformly elliptic equations (the equation is uniformly elliptic in the interior $\{x_{n+1}>0\}$). Thus by the monotonicity properties of $\psi$ it is possible to find such a $\tau_0$ (which will depend on $u$).
This choice of $\tau_0>0$ then allows us to absorb the first term on the right hand side of \eqref{eq:Carl_2} into the left hand side. As a consequence, we obtain that for all $r\in (0,r_0/10)$ (where $r_0>0$ was the radius obtained in the discussion of the boundary terms above) and for all $\tau \geq \tau_0>1$
\begin{align}
\label{eq:Carl_2a}
\begin{split}
&  \| u \|_{H^1_r(B_{4r}^+,x_{n+1}^{1-2s})}
\leq C e^{\tau (\psi(r)- \psi(4r))}\|u \|_{H^1_r(B_{2r}^+, x_{n+1}^{1-2s})}.
\end{split}
\end{align}

Finally, we note that by the choice of $\psi(t)$ the difference $\psi(r)- \psi(4r)$ can be bounded from below and above independently of $r>0$. Indeed, we have
\begin{align*}
\psi(r)-\psi(4r) = \ln(4) + \frac{1}{10} \left(\ln(r) \arctan(\ln(r))-\ln(4r) \arctan(\ln(4r)) \right) - \frac{1}{20}\ln\left( \frac{1+\ln^2(r)}{1+\ln^2(4r)}\right) .
\end{align*}
Since
\begin{align*}
\frac{1+\ln^2(r)}{1+\ln^2(4r)} \rightarrow 1 \mbox{ as } r \rightarrow 0,
\end{align*}
it suffices to estimate the difference 
\begin{align*}
\ln(r) \arctan(\ln(r))-\ln(4r) \arctan(\ln(4r)) .
\end{align*}
For this we recall the Taylor expansion of $\arctan$ at infinity
\begin{align*}
\arctan(t) = \frac{\pi}{2}-\frac{1}{t} + o(t^{-2}), \ t \gg 1.
\end{align*}
This yields
\begin{align*}
\left| \ln(r) \arctan(\ln(r))-\ln(4r) \arctan(\ln(4r))  \right| 
\leq  \frac{\pi}{2}\ln(4) + o\left(\frac{1}{\ln(r)} \right).
\end{align*}
As a consequence, if $r\in (0,r_0)$ with $r_0>0$ sufficiently small, this then implies
\begin{align} \label{psi_difference}
\ln(4) - \frac{2}{5} \leq |\psi(r)-\psi(4r)| \leq \ln(4),
\end{align}
which concludes the proof of the desired doubling estimate.
\end{proof}

\begin{rmk}
\label{rmk:Spec_gap}
As noted in Section 7.3 and in Remark 3 in \cite{Rue15} the Carleman estimate can be strengthened in the presence of a spectral gap. Due to (for instance) the results in \cite{KRS16}, Section 8.3, it is now known that a spectral gap analogous to the one for uniformly elliptic equations also holds in the setting of the fractional Laplacian. In particular, this implies that the strengthened bounds from Remark 3 and Section 7.3 in \cite{Rue15} hold and that in the Carleman estimate \eqref{eq:Carl} it is also possible to consider logarithmic weight functions (without additional convexifications).
\end{rmk}

\begin{rmk}
\label{rmk:s_small}
In the case $s\in (0,1/4)$ the Carleman estimate \eqref{eq:Carl} can no longer be used to derive doubling estimates, as it is no longer possible to absorb the boundary term from the right hand side of \eqref{eq:Carl} into the left hand side (due to the mismatch in the powers of $\tau$). Due to the subelliptic nature of the Carleman estimate this loss in $\tau$ seems unavoidable (for weights which are purely tangential). However, a similar Carleman estimate can be proved, if more regularity in $q$ is required (in \cite{Rue15} the author assumed $C^1$ regularity, but only $C^1$ regularity in the radial directions was used in the corresponding argument). Using this estimate, it would have been possible to derive an analogous doubling inequality and to extend our measurable unique continuation argument to $s\in (0,1/4)$ if $q \in C^1$. However, as the unique continuation property from measurable sets with $C^1$ potentials was already proved in \cite{FF14}, and as our reconstruction argument can directly deal with continuous potentials $q$ by invoking the weak unique continuation property, we do not further pursue this here.
\end{rmk}

Combining the estimates from Lemmas \ref{lem:Cacc}-\ref{lem:doubling}, we obtain the following estimate.

\begin{lem}[Gradient estimate]
\label{cor:comp}
Let $\Omega \subset \R^n$ be a bounded open set and let $q \in L^{\infty}(\Omega)$. Let $s\in [\frac{1}{4},1)$ and assume that $u \in H^s(\R^n)$ satisfies \eqref{eq:schr}. Then for each $x_0\in \Omega$ there exists a constant $M=M(\dist(x_0,\p \Omega),n,s, \|q\|_{L^{\infty}(\Omega)}, u)>0$ such that for each $r$ with 
\[
0 < r < \min\{\dist(x_0, \partial \Omega)/2, (2 M \|q\|_{L^{\infty}(\Omega)}^{1/2})^{-\frac{1}{s}} \})
\]
one has 
\begin{align*}
\|x_{n+1}^{\frac{1-2s}{2}} \nabla u \|_{L^2(B_{r}^+(x_0))}
\leq M r^{-1} \|x_{n+1}^{\frac{1-2s}{2}} u\|_{L^2(B_{r}^+(x_0))}.
\end{align*}
\end{lem}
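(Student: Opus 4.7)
The strategy is to combine the Caccioppoli inequality (Lemma~\ref{lem:Cacc}), the trace estimate (Lemma~\ref{lem:trace}), and the doubling inequality (Lemma~\ref{lem:doubling}), closing the argument by absorbing a gradient term into the left-hand side once $r$ is taken sufficiently small. Throughout, I abbreviate $A(\rho) = \|x_{n+1}^{(1-2s)/2} u\|_{L^2(B_\rho^+(x_0))}$ and $G(\rho) = \|x_{n+1}^{(1-2s)/2}\nabla u\|_{L^2(B_\rho^+(x_0))}$. The Caffarelli--Silvestre Robin identity $c_{n,s}\lim_{x_{n+1}\to 0} x_{n+1}^{1-2s}\partial_{n+1}u = -q u$ on $\Omega \times \{0\}$ lets me bound the normal-derivative factor appearing in Lemma~\ref{lem:Cacc} by $c_{n,s}^{-1}\|q\|_{L^\infty(\Omega)}\|u\|_{L^2(B_\rho'(x_0))}$.

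The key step is to apply Lemma~\ref{lem:Cacc} at scale $r/2$ rather than $r$, which yields
\[
G(r/2) \leq C r^{-1} A(r) + C\|q\|_{L^\infty(\Omega)}^{1/2}\|u\|_{L^2(B_r'(x_0))},
\]
and then to push the left-hand side from scale $r/2$ up to scale $r$ using Lemma~\ref{lem:doubling} with $r$ replaced by $r/2$. This produces $rG(r) \leq 2MA(r/2) + M r G(r/2)$; estimating $A(r/2) \leq A(r)$ and substituting the above Caccioppoli bound gives
\[
rG(r) \leq C_1 A(r) + C_1 \|q\|_{L^\infty(\Omega)}^{1/2}\, r\|u\|_{L^2(B_r'(x_0))},
\]
with $C_1$ depending on the doubling constant, hence on $u$, $\dist(x_0,\partial\Omega)$, $n$, $s$, and $\|q\|_{L^\infty(\Omega)}$.

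The remaining boundary integral is handled by Lemma~\ref{lem:trace}, which supplies $\|u\|_{L^2(B_r')} \leq C(r^s G(2r) + r^{s-1}A(2r))$, followed by a further application of Lemma~\ref{lem:doubling} at scale $r$ to replace $A(2r) + rG(2r)$ by $M(A(r) + rG(r))$. The outcome is an inequality of the form
\[
rG(r) \leq C_1 A(r) + C_2 \|q\|_{L^\infty(\Omega)}^{1/2}\, r^s \bigl(A(r) + rG(r)\bigr),
\]
and for $r^s < (2 C_2 \|q\|_{L^\infty(\Omega)}^{1/2})^{-1}$ the $rG(r)$ term on the right is at most $\tfrac12 rG(r)$ and can be absorbed, giving $rG(r) \leq M A(r)$ after renaming constants. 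The smallness condition on $r$ matches the one in the statement once $M$ is chosen at least as large as $C_2$.

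The main obstacle is the absorption step. A naive application of Lemma~\ref{lem:Cacc} at scale $r$ produces the interior term $Cr^{-1}A(2r)$, and invoking doubling to replace $A(2r)$ by $A(r) + rG(r)$ immediately generates a contribution of size $CM \cdot G(r)$ with $CM \geq 1$ (and potentially large, since $M$ depends on $u$); this term cannot be absorbed no matter how small $r$ is. Applying Caccioppoli one scale below and only afterwards invoking doubling routes the unavoidable doubling loss onto the $A$-term via $A(r/2) \leq A(r)$, rather than onto the gradient $G(r)$, which is precisely what allows the absorption to close. The factor $r^s$ supplied by the trace estimate then accounts for the smallness needed to swallow the Robin boundary contribution.
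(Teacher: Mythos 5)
Your proof is correct and follows the same three-ingredient strategy as the paper (Caccioppoli, trace, doubling, then absorption after choosing $r^s$ small relative to $M\|q\|_{L^\infty}^{1/2}$); the only difference is the scale bookkeeping — the paper applies Caccioppoli at radius $r/4$ and the trace estimate at $r/2$, so the boundary term is produced directly at scale $r$ with no further doubling, whereas you apply Caccioppoli at $r/2$ and trace at $r$ and then invoke doubling once more to bring $A(2r)+rG(2r)$ back to scale $r$. Your closing remark about why Caccioppoli must be applied strictly below scale $r$ — so the doubling loss lands on the $A$-term rather than on $G(r)$ — is exactly the point that makes either version of the argument close.
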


\begin{proof}
We observe that by the doubling estimate from Lemma \ref{lem:doubling} we have
\begin{align}
\label{eq:grad_1}
\|x_{n+1}^{\frac{1-2s}{2}} \nabla u\|_{L^2(B_{r}^+)}
\leq M(\|x_{n+1}^{\frac{1-2s}{2}} \nabla u\|_{L^2(B_{r/4}^+)} +
r^{-1} \|x_{n+1}^{\frac{1-2s}{2}}  u\|_{L^2(B_{r/4}^+)}).
\end{align}
By Caccioppoli's inequality (Lemma \ref{lem:Cacc}), where we use that on $B_{r/2}'$ 
\begin{align*}
c_{n,s}\lim\limits_{x_{n+1}\rightarrow 0} x_{n+1}^{1-2s} \p_{n+1} u = (-\Delta)^s u = -q u,
\end{align*} 
and by Lemma \ref{lem:trace} we further obtain
\begin{align}
\label{eq:grad_2}
\begin{split}
&\|x_{n+1}^{\frac{1-2s}{2}} \nabla u\|_{L^2(B_{r/4}^+)}
\leq C_{n,s}( r^{-1} \|x_{n+1}^{\frac{1-2s}{2}}  u\|_{L^2(B_{r/2}^+)} + \|q\|_{L^{\infty}(B_{r/2}')}^{1/2} \|u\|_{L^2(B_{r/2}')}) \\
&\leq C_{n,s}( (1+ \|q\|_{L^{\infty}(\Omega)}^{1/2} r^{s}) r^{-1} \|x_{n+1}^{\frac{1-2s}{2}}  u\|_{L^2(B_{r}^+)} + r^{s}\|q\|_{L^{\infty}(\Omega)}^{1/2} \|x_{n+1}^{\frac{1-2s}{2}} \nabla u\|_{L^2(B_{r}^+)}). 
\end{split}
\end{align}
Combining \eqref{eq:grad_1} with \eqref{eq:grad_2} and 
using that $r\leq \left(\frac{1}{2 C_{n,s} M \|q\|_{L^{\infty}(\Omega)}^{1/2}}\right)^{\frac{1}{s}}$ (which allows us to absorb the gradient term on the right hand side into the left hand side) implies
\begin{align*}
\|x_{n+1}^{\frac{1-2s}{2}} \nabla u\|_{L^2(B_{r}^+(x_0))} \leq M r^{-1}\|x_{n+1}^{\frac{1-2s}{2}}u\|_{L^{2}(B_{r}^+(x_0))}.
\end{align*}
This concludes the argument.
\end{proof}

\subsection{Proof of Proposition \ref{prop:mUCP}}
\label{sec:proof_1}

With the auxiliary results of Section \ref{sec:aux_1} at hand, we address the proof of Proposition \ref{prop:mUCP}.

\begin{proof}[Proof of Proposition \ref{prop:mUCP}]
We seek to reduce the claim of Proposition \ref{prop:mUCP} to the boundary unique continuation property for the Caffarelli-Silvestre extension. To this end, we argue by contradiction and assume that $u \in H^s(\R^n)$ is a fixed function that satisfies \eqref{eq:schr}, vanishes on a set $E$ of positive measure, but $u$ is not identically zero. We split the argument into two steps. First we derive a smallness condition at points of density one of $E \subset \Omega$. Then, using the assumption that $u \not\equiv 0$, we show that this smallness property together with a blow-up argument implies a contradiction. We stress that the constants below may in general depend on the fixed solution $u$. \\

\emph{Step 1: Smallness.}
We claim that for any fixed point $x_0 \in E \cap \Omega$ of density one of $E$ and for any $\epsilon>0$, there exists a radius $r_0 > 0$, depending on $x_0,\epsilon,\dist(x_0,\p \Omega),n,s, \|q\|_{L^{\infty}(\Omega)}$, and $u$, such that for all $r\in (0,r_0)$ it holds
\begin{align}
\label{eq:small}
\|u\|_{L^2(B_r'(x_0))} \leq \epsilon r^{s-1}\|x_{n+1}^{\frac{1-2s}{2}} u\|_{L^2(B_r^+(x_0))}.
\end{align}
In order to observe this, we first note that as $x_0$ is a point of density one of $E$, we have that for each $\delta>0$ there exists $r_{\delta}>0$ such that for all $r\in (0,r_{\delta})$
\begin{align}
\label{eq:density_p}
|B_{r}'(x_0)\cap E^{c}| \leq \delta |B_r'(x_0)|.
\end{align}
Next we distinguish two cases. We first assume that either $n\geq 2$, or $n=1$ and $s\in(0,1/2)$; the case $n=1$ and $s\in [1/2,1)$ will be treated in Step 1b below.\\

\emph{Step 1a: The case $n\geq 2$, or $n=1$ and $s\in(0,1/2)$.}
Fixing $r \in (0,r_{\delta})$ (for a value of $\delta$, which is still to be determined) and using that $u=0$ on $E$, we obtain by virtue of H\"older's inequality that
\begin{align}
\label{eq:dense}
\|u\|_{L^2(B_{r}'(x_0))} 
= \|u\|_{L^2(B_{r}'(x_0) \cap E^c )}
\leq |B_{r}'(x_0) \cap E^{c}|^{\frac{s}{n}} \|u\|_{L^{2^{\ast}(s)}(B_r'(x_0))},
\end{align}
where $2^{\ast}(s):= \frac{2n}{n-2s} \in (1,\infty)$ (and where we used the assumption $n\geq 2$ or $n=1$ and $s\in(0,1/2)$). 
Applying the localized Sobolev inequality (Lemma \ref{lem:trace}) as well as doubling (Lemma \ref{lem:doubling}) and Lemma  \ref{cor:comp}, we further bound
\begin{align}
\label{eq:Sob}
\begin{split}
\|u\|_{L^{2^{\ast}(s)}(B_r'(x_0))}
&\leq C( \|x_{n+1}^{\frac{1-2s}{2}}\nabla u\|_{L^2(B_{2r}^+(x_0))}
+ r^{-1}\|x_{n+1}^{\frac{1-2s}{2}} u\|_{L^2(B_{2 r}^+(x_0))})\\
&\leq M r^{-1} \|x_{n+1}^{\frac{1-2s}{2}} u\|_{L^2(B_{r}^+(x_0))}.
\end{split}
\end{align}
Combining \eqref{eq:density_p}, \eqref{eq:dense} and \eqref{eq:Sob} consequently yields
\begin{align*}
\|u\|_{L^2(B_{r}'(x_0))} 
\leq M \delta^{\frac{s}{n}} |B_{r}'(x_0)|^{\frac{s}{n}} r^{-1} \|x_{n+1}^{\frac{1-2s}{2}} u\|_{L^2(B_{r}^+(x_0))}
= M \delta^{\frac{s}{n}} r^{s-1} \|x_{n+1}^{\frac{1-2s}{2}} u\|_{L^2(B_{r}^+(x_0))}.
\end{align*}
Choosing $\delta$ such that $M \delta^{\frac{s}{n}} =\epsilon$ hence implies \eqref{eq:small}.\\ 

\emph{Step 1b: The case $n=1$, $s\in [1/2,1)$.} If $n=1$, $s\in [1/2,1)$, we only have to modify the bounds leading to \eqref{eq:dense} and \eqref{eq:Sob}. Setting $s':=s/2 \in [1/4,1/2)$, we obtain
\begin{align}
\label{eq:dense1}
\|u\|_{L^2(B_{r}'(x_0))} 
= \|u\|_{L^2(B_{r}'(x_0) \cap E^c )}
\leq |B_{r}'(x_0) \cap E^{c}|^{\frac{s'}{n}} \|u\|_{L^{2^{\ast}(s')}(B_r'(x_0))}
\end{align}
as a replacement of \eqref{eq:dense}. Similarly, we infer
\begin{align}
\label{eq:Sob1}
\begin{split}
\|u\|_{L^{2^{\ast}(s')}(B_r'(x_0))}
&\leq C( \|x_{n+1}^{\frac{1-2s'}{2}}\nabla u\|_{L^2(B_{2r}^+(x_0))}
+ r^{-1}\|x_{n+1}^{\frac{1-2s'}{2}} u\|_{L^2(B_{2 r}^+(x_0))})\\
&\leq Cr^{s/2}(\|x_{n+1}^{\frac{1-2s}{2}}\nabla u\|_{L^2(B_{2r}^+(x_0))}
+ r^{-1}\|x_{n+1}^{\frac{1-2s}{2}} u\|_{L^2(B_{2 r}^+(x_0))}).
\end{split}
\end{align}
Thus, combining \eqref{eq:dense1} and \eqref{eq:Sob1} with the estimates from Lemmas \ref{lem:doubling} and \ref{cor:comp} then also implies that 
\begin{align*}
\|u\|_{L^2(B_{r}'(x_0))} 
\leq M \delta^{\frac{s}{2n}} |B_{r}'(x_0)|^{\frac{s}{2n}} r^{s/2} r^{-1} \|x_{n+1}^{\frac{1-2s}{2}} u\|_{L^2(B_{r}^+(x_0))}
= M \delta^{\frac{s}{2n}} r^{s-1} \|x_{n+1}^{\frac{1-2s}{2}} u\|_{L^2(B_{r}^+(x_0))}.
\end{align*}
Choosing $\delta$ such that $M\delta^{\frac{s}{2n}}= \epsilon$ then concludes the argument for \eqref{eq:small}.\\

\emph{Step 2: Vanishing of infinite order.}
Let $x_0 \in E \cap \Omega$ be a point of density one of $E$.
We next use \eqref{eq:small} in combination with a blow-up argument, in order to infer that $u$ vanishes identically in $B_1^+(x_0)$. This will give a contradiction to our assumption that $u$ is not identically zero.

For any $\sigma > 0$, define the function $u_{\sigma}$ in $\R^{n+1}_+$ by 
\begin{align*}
u_{\sigma}(x):= \frac{u(x_0 + \sigma x)}{\sigma^{-\frac{n+1}{2}}\sigma^{-\frac{1-2s}{2}} \|x_{n+1}^{\frac{1-2s}{2}} u\|_{L^2(B_{\sigma}^+(x_0))}}.
\end{align*}
This function is well-defined, as the denominator does not vanish for any choice of $\sigma>0$ (as else by unique continuation $u\equiv 0$, which is ruled out by our assumption).
If $\sigma>0$ is sufficiently small (so as to satisfy the conditions in Lemma \ref{cor:comp}), a rescaling of the estimate
\begin{align*}
\|x_{n+1}^{\frac{1-2s}{2}} u\|_{L^2(B_{2\sigma}^+(x_0))} + \sigma\|x_{n+1}^{\frac{1-2s}{2}}\nabla u\|_{L^2(B_{2\sigma}^+(x_0))}
\leq M  \|x_{n+1}^{\frac{1-2s}{2}} u\|_{L^2(B_{\sigma}^+(x_0))}
\end{align*}
which follows from Lemmas \ref{lem:doubling} and \ref{cor:comp} entails that 
\begin{align*}
\|x_{n+1}^{\frac{1-2s}{2}} u_{\sigma}\|_{L^2(B_{2}^+(0))}
+
\|x_{n+1}^{\frac{1-2s}{2}}\nabla u_{\sigma}\|_{L^2(B_{2}^+(0))}
\leq M  \|x_{n+1}^{\frac{1-2s}{2}} u_{\sigma}\|_{L^2(B_{1}^+(0))} = M.
\end{align*}
Here we used the normalization of $u_{\sigma}$ on $B_1^+(0)$ to infer the last bound. As a consequence of this estimate, Rellich's compactness theorem for these weighted spaces (see \cite[Section 2.2]{FF14}) and the normalization $\|x_{n+1}^{\frac{1-2s}{2}}u_{\sigma}\|_{L^2(B_1^+)}=1$, we infer that there exists $u_0 \in H^{1}(B_2^+, x_{n+1}^{1-2s})$ such that, for some subsequence $\sigma_k \to 0$,
\begin{equation}
\label{eq:bulk_conv}
\begin{gathered}
u_{\sigma_k} \rightharpoonup u_0 \text{ weakly in $H^1(B_{2}^+, x_{n+1}^{1-2s})$}, \qquad u_{\sigma_k} \rightarrow u_0 \text{ strongly in } L^2(B_{2}^+, x_{n+1}^{1-2s}), \\\mbox{ and } \|x_{n+1}^{\frac{1-2s}{2}} u_0\|_{L^2(B_{2}^+)} = 1.
\end{gathered}
\end{equation}
The trace map $H^1(B_{2}^+, x_{n+1}^{1-2s}) \to L^2(B_{3/2}')$, $v \mapsto v|_{B_{3/2}'}$ is compact by Lemma \ref{lem:trace} and by the compact embedding $H^s(B_{3/2}') \to L^2(B_{3/2}')$. In particular, after passing to another subsequence, we also infer that 
\begin{align}
\label{eq:bound_conv}
u_{\sigma_k} \rightarrow u_0 \mbox{ in } L^2(B_{3/2}') \mbox{ and } u_0 \in H^{s}(B_{3/2}').
\end{align}

Note that for $\sigma$ small enough, the function $u_{\sigma}$ is a weak solution of
\begin{align}
\label{eq:weak_1}
\nabla \cdot x_{n+1}^{1-2s} \nabla u_{\sigma}(x) &= 0 \mbox{ in } B_{2}^+,\\
c_{n,s}\lim\limits_{x_{n+1}\rightarrow 0} x_{n+1}^{1-2s} \p_{n+1} u_{\sigma}(x) 
&= - \sigma^{2s} q(x_0 + \sigma x) u_{\sigma}(x) \mbox{ on } B_{2}'.
\end{align}
In particular, for all $\varphi \in C^{\infty}(\R^{n+1}_+)$ with $\supp(\varphi) \subset B_{3/2}^+ \cap B_{3/2}'$ we have
\begin{align}
\label{eq:weak_2}
\int\limits_{B_1^+} x_{n+1}^{1-2s} \nabla u_{\sigma}(x) \cdot \nabla \varphi(x) dx
= - \frac{\sigma^{2s}}{c_{n,s}} \int\limits_{B_1'} q(x_0 + \sigma x) u_{\sigma}(x) \varphi(x) dx.
\end{align}
Using the convergences from \eqref{eq:bulk_conv}, \eqref{eq:bound_conv}, we obtain that in the limit $\sigma \rightarrow 0$ the function $u_0$ solves
\begin{align*}
\int\limits_{B_{3/2}^+} x_{n+1}^{1-2s} \nabla u_{0}(x) \cdot \nabla \varphi(x) dx
= 0 \mbox{ for all } \varphi \in C^{\infty}(\R^{n+1}_+) \mbox{ with }\supp(\varphi) \subset B_{3/2}^+ \cap B_{3/2}', 
\end{align*}
i.e. $u_0$ is a weak solution of 
\begin{align}
\label{eq:limit_eq_local}
\begin{split}
\nabla \cdot x_{n+1}^{1-2s} \nabla u_{0}(x) &= 0 \mbox{ in } B_{3/2}^+,\\
c_{n,s}\lim\limits_{x_{n+1}\rightarrow 0} x_{n+1}^{1-2s} \p_{n+1} u_{0}(x) 
&= 0 \mbox{ on } B_{3/2}'.
\end{split}
\end{align}
By interior regularity for solutions to \eqref{eq:limit_eq_local} (see for instance Proposition 8.2 in \cite{KRS16}, where the proof shows that it is possible to relax the $L^{\infty}$ requirement to a weighted $L^2$ requirement; alternative arguments on a Sobolev scale follow as in Section 4 in \cite{RS17a}), we also infer that $u_0$ is a H\"older continuous solution to \eqref{eq:limit_eq_local} to which the estimates from above can be applied.

Due to the bound \eqref{eq:small}, for $\sigma$ sufficiently small we however further have
\begin{align*}
\|u_{\sigma}\|_{L^2(B_{1}'(x_0))} \leq \epsilon \|x_{n+1}^{\frac{1-2s}{2}} u_{\sigma}\|_{L^2(B_1^+(x_0))}.
\end{align*}
Passing to the limit $\sigma_k \rightarrow 0$ in this estimate and using the strong convergences \eqref{eq:bulk_conv}, \eqref{eq:bound_conv} results in 
\begin{align}
\label{eq:eps}
\|u_0\|_{L^2(B_{1}'(x_0))} \leq \epsilon .
\end{align}
A diagonal sequence argument shows that \eqref{eq:eps} holds for a sequence $\epsilon_k \rightarrow 0$. As a consequence, $u_0 = 0$ in $B_{1}'(x_0)$. As $u_0$ also solves \eqref{eq:limit_eq_local} in $B_1^+$, we arrive at $u_0=0$ and $\lim\limits_{x_{n+1}\rightarrow 0} x_{n+1}^{1-2s} \p_{n+1} u_0= 0$ in $B_{1}' \times \{0\}$. By (boundary) weak unique continuation property of the Caffarelli-Silvestre extension \cite[Proposition 2.2]{Rue15}, this however entails that $u_0\equiv 0$ in $B_1^+$, which contradicts $\|x_{n+1}^{\frac{1-2s}{2}} u_0\|_{L^2(B_1^+)}=1$.
We have reached a contradiction to the assumption $u \not\equiv 0$. It follows that $u \equiv 0$, which concludes the proof.
\end{proof}

\section{Stability of the unique continuation result}
\label{sec:stab}

Theorem \ref{thm_ucp_constructive} states that a function $v \in H^{s}(\R^n)$ with $\supp(v) \subset \overline{\Omega}$ can be uniquely and constructively determined from the data $(-\D)^s v|_{W}$. In other words, the map 
\[
T: H^s_{\overline{\Omega}} \to H^{-s}(W), \ \ Tv = (-\Delta)^s v|_W
\]
is injective and $v$ can be reconstructed from $Tv$. In this section we are interested in the stability properties of recovering $v$ from $Tv$. Since $\overline{\Omega} \cap \overline{W} = \emptyset$, the map $T$ is smoothing and hence compact, and inverting $T$ is an ill-posed problem. It is well known that some stability properties can be restored by restricting the unknowns to a compact set (which corresponds to assuming a priori bounds), or by using a weaker norm for the unknowns. Here we will choose the latter approach and measure errors in the unknowns in the $H^{s'}$ norm where $s' < s$.

Note that $T$ also maps $H^{s'}_{\overline{\Omega}}$ injectively to $H^{-s}(W)$ for any $s' < s$ by the smoothing property. Any closed and bounded set $B$ in $H^s_{\overline{\Omega}}$ is compact in $H^{s'}_{\overline{\Omega}}$, and thus $T: B \subset H^{s'}_{\overline{\Omega}} \to H^{-s}(W)$ is a homeomorphism onto its image. The following result, which is very close to the stability results in \cite{RS17a}, shows that the inverse of $T$ has a logarithmic modulus of continuity in this setup.

\begin{prop}[Stability of unique continuation]
\label{prop:stab}
Let $\Omega, W \subset \R^n$ with $n\geq 1$ be bounded Lipschitz domains with $\overline{\Omega} \cap \overline{W} = \emptyset$, let $0 < s < 1$, and let $s' < s$. Then, there exist constants $C, \sigma >0$, which only depend on $\Omega$, $W$, $n$, $s$, $s'$, such that for any $E > 0$ one has 
\begin{equation} \label{ucp_stability_v}
\|v\|_{H^{s'}_{\overline{\Omega}}} \leq C \frac{E}{\log(C \frac{E}{\|(-\D)^s v\|_{H^{-s}(W)}})^{\sigma}} \ \text{ whenever } \ \|v\|_{H^s_{\overline{\Omega}}} \leq E.
\end{equation}
\end{prop}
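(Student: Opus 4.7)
The plan is to combine the Caffarelli--Silvestre extension with a quantitative (logarithmic) propagation of smallness for the associated degenerate elliptic equation, closely following the stability analysis in \cite{RS17a}.

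First, let $\tilde v\in H^1(\R^{n+1}_+, x_{n+1}^{1-2s})$ be the Caffarelli--Silvestre extension of $v$, so that $\nabla\cdot(x_{n+1}^{1-2s}\nabla\tilde v)=0$ in $\R^{n+1}_+$, $\tilde v|_{\R^n\times\{0\}}=v$ and $c_{n,s}\lim_{x_{n+1}\to 0^+} x_{n+1}^{1-2s}\p_{n+1}\tilde v=(-\D)^s v$. Since $\supp(v)\subset\overline\Omega$ and $\overline W\cap\overline\Omega=\emptyset$, the Dirichlet trace of $\tilde v$ vanishes identically on $W\times\{0\}$, while the weighted Neumann trace has $H^{-s}(W)$-norm equal (up to $c_{n,s}$) to the small quantity $\|(-\D)^s v\|_{H^{-s}(W)}$. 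The global a priori bound $\|\tilde v\|_{H^1(\R^{n+1}_+,x_{n+1}^{1-2s})}\leq C\|v\|_{H^s(\R^n)}\leq CE$ follows from the continuity of the extension operator.

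Second, I would invoke a quantitative boundary unique continuation estimate for the Caffarelli--Silvestre equation with Cauchy data on $W\times\{0\}$: combining the Carleman inequality from the appendix (or its analogue used in \cite{RS17a}) with a chain-of-balls propagation, one obtains, for some $\sigma_0>0$ and for any relatively compact neighbourhood $U$ of $\overline\Omega$ in $\R^{n+1}_+\cup\R^n$,
\[
\|\tilde v\|_{L^2(U,\,x_{n+1}^{1-2s})} \leq C\,\frac{E}{\log\!\bigl(CE/\|(-\D)^s v\|_{H^{-s}(W)}\bigr)^{\sigma_0}}.
\]
Restricting the trace to $\overline\Omega$ and using that $v$ is supported in $\overline\Omega$ yields a bound of the same logarithmic form for $\|v\|_{L^2(\R^n)}$.

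Third, I would upgrade the $L^2$ estimate to an $H^{s'}$ estimate by interpolation. For $s'\in[0,s)$ the standard inequality
\[
\|v\|_{H^{s'}_{\overline\Omega}} \leq C\,\|v\|_{L^2(\R^n)}^{1-s'/s}\,\|v\|_{H^{s}_{\overline\Omega}}^{s'/s}
\]
and the a priori bound $\|v\|_{H^s_{\overline\Omega}}\leq E$ convert the above $L^2$-bound into the desired estimate \eqref{ucp_stability_v} with $\sigma=\sigma_0(1-s'/s)$, after absorbing constants into $C$ and $\sigma$.

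The main obstacle will be Step~2: one must produce a logarithmic Cauchy-type stability estimate for the degenerate equation in which \emph{both} the Dirichlet trace and the weighted Neumann trace on $W\times\{0\}$ enter (as opposed to the pure Dirichlet case handled in the classical literature and the pure Neumann case used in Section~\ref{sec:ucp}). Once the correct propagation-of-smallness statement is extracted from the Carleman framework of Lemma~\ref{lem:doubling} and the appendix---tracking carefully the dependence of the constants on the Cauchy data in the weighted spaces---the remaining steps are routine interpolation and standard trace estimates.
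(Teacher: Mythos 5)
Your overall structure (Caffarelli--Silvestre extension, quantitative propagation of smallness from the Cauchy data on $W\times\{0\}$, and then a passage back to the boundary at a lowered regularity) matches the paper's strategy, and the part you flag as the main obstacle---the logarithmic propagation of smallness in the bulk---is in fact supplied by the paper as a direct citation of \cite[Theorem 5.1]{RS17a}. The genuine gap in your argument is elsewhere, in your Step 3.

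You claim that ``restricting the trace to $\overline{\Omega}$'' converts the bulk estimate
$\|x_{n+1}^{(1-2s)/2}\tilde v\|_{L^2(U)}\lesssim E/\log(CE/\eta)^{\sigma_0}$
into a bound of the same logarithmic form for $\|v\|_{L^2}$. That step is not valid: there is no bounded trace operator from the weighted $L^2$-space of the half-space to $L^2(\R^n)$. The available trace estimate (Lemma~\ref{lem:trace}, following \cite[Lemma~4.5]{RS17a}) has the form
\[
\|v\|_{L^2(B_r')}\leq C\bigl(r^{s}\,\|x_{n+1}^{\frac{1-2s}{2}}\nabla\tilde v\|_{L^2(B_{2r}^+)}+r^{s-1}\|x_{n+1}^{\frac{1-2s}{2}}\tilde v\|_{L^2(B_{2r}^+)}\bigr),
\]
and the gradient term on the right is, at this point, only controlled by the a priori bound $E$, not by the small quantity. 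So the trace of $\tilde v$ inherits no smallness from the bulk $L^2$ bound alone. Consequently the interpolation inequality in your Step 3 has nothing small to interpolate against.

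What the paper does instead is to first upgrade the bulk $L^2$ bound to a bulk gradient bound with logarithmic smallness, and this is exactly where the regularity gap $s'<s$ is consumed: one splits the strip in the $x_{n+1}$-direction at a height $r$, uses a Caccioppoli inequality (which converts the gradient on $\{x_{n+1}>r\}$ into the already-small weighted $L^2$ norm at the cost of a factor $r^{-1}$), and uses the a priori bound together with the factor $r^{s-s'}$ on $\{x_{n+1}<r\}$; optimizing over $r$ balances the two and produces a logarithmically small bound for $\|x_{n+1}^{(1-2s')/2}\nabla\tilde v\|_{L^2}$. Once both the weighted $L^2$ and the weighted gradient are logarithmically small \emph{with the $s'$ weight}, the trace estimate at level $s'$ gives $\|v\|_{H^{s'}_{\overline\Omega}}$ directly, and no interpolation is needed. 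If you want to retain your interpolation structure, you would have to run this Caccioppoli--balancing argument first to produce a small $L^2$ trace bound, and then interpolate; as written, your proposal skips this essential step.
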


We note that for any $v \in H^s_{\overline{\Omega}}$, upon choosing $E = \|v\|_{H^s_{\overline{\Omega}}}$, the estimate \eqref{ucp_stability_v} can be written equivalently as 
\begin{align}
\label{eq:stab}
\|v\|_{H^s_{\overline{\Omega}}} \leq C \exp\left( C \left( \frac{\| v\|_{H^s_{\overline{\Omega}}}}{\|v\|_{H^{s'}_{\overline{\Omega}}}} \right)^{\mu} \right) \|(-\Delta)^s v\|_{H^{-s}(W)}
\end{align}
where $\mu = 1/\sigma$. This inequality states that high oscillations in $v$ give rise to logarithmic instabilities in the recovery of $v$ from $(-\Delta)^s v|_W$.

\begin{proof}
This result will follow rather directly from a propagation of smallness result for the Caffarelli-Silvestre extension \cite[Theorem 5.1]{RS17a}. By definition, the Caffarelli-Silvestre extension $\tilde{w}$ of $v$ is the solution of
\begin{align*}
\nabla \cdot x_{n+1}^{1-2s} \nabla \tilde{w} & = 0 \mbox{ in } \R^{n+1}_+,\\
\tilde{w} & = v \mbox{ on } \R^{n} \times \{0\}.
\end{align*}
We recall that $\tilde{w}$ in particular satisfies (even in a strong sense as $\tilde{w}= 0$ in $\Omega_e$, cf.\ Section 4 in \cite{RS17a})
\begin{align} \label{fractional_laplace_v}
(-\D)^s v|_{W} = c_s \lim\limits_{x_{n+1}\rightarrow 0} x_{n+1}^{1-2s} \p_{n+1} \tilde{w}|_{W \times \{0\}}.
\end{align}

It follows from \cite[Lemma 4.2]{RS17a} that 
\begin{equation} \label{extension_cr_bound}
\|x_{n+1}^{\frac{1-2s}{2}} \tilde{w}\|_{L^2(\R^n \times (0,2))} + \|x_{n+1}^{\frac{1-2s}{2}} \nabla \tilde{w}\|_{L^2(\R^{n+1}_+)} \leq C_s \|v\|_{H^s_{\overline{\Omega}}} \leq C_s E.
\end{equation}
The formula \eqref{fractional_laplace_v} also gives 
\[
\| \lim\limits_{x_{n+1}\rightarrow 0} x_{n+1}^{1-2s} \p_{n+1} \tilde{w}|_{W \times \{0\}}\|_{H^{-s}(W)} = C_s \|(-\Delta)^s v\|_{H^{-s}(W)}.
\]
Thus letting $\eta := C_s \|(-\Delta)^s v\|_{H^{-s}(W)}$, replacing $E$ by $C_s E$, and assuming that $\frac{E}{\eta} > 1$ (this is possible by \cite[Remark 5.2]{RS17a}), it follows from \cite[Theorem 5.1]{RS17a} that 
\begin{equation} \label{extension_ltwo_bound}
\|x_{n+1}^{\frac{1-2s}{2}} \tilde{w}\|_{L^2(2\Omega \times (0,1))} \leq C \frac{E}{\log(C \frac{E}{\eta})^{\sigma}}.
\end{equation}
Here for $c\in (0,5)$, the notation $c \Omega$ denotes the set $\{x\in \R^n: \dist(x,\Omega)\leq c r_0\}$, where $r_0:= \frac{\dist(\Omega, W)}{10}$.
We wish to obtain a similar estimate for $\nabla \tilde{w}$ with $s$ replaced by $s'$. To do this, note that for any $r \in (0,1/2]$ one has 
\begin{align*}
\|x_{n+1}^{\frac{1-2s'}{2}} \nabla \tilde{w}\|_{L^2((3/2) \Omega \times (0,1/2))} &\leq r^{s-s'} \|x_{n+1}^{\frac{1-2s}{2}} \nabla \tilde{w}\|_{L^2((3/2)\Omega \times (0,r))} + \|x_{n+1}^{\frac{1-2s'}{2}} \nabla \tilde{w}\|_{L^2((3/2)\Omega \times (r,1/2))} \\
 &\leq r^{s-s'} E + C r^{-1} \|x_{n+1}^{\frac{1-2s'}{2}} \tilde{w}\|_{L^2(2\Omega \times (0,1))} \\
 &\leq r^{s-s'} E + r^{-1} C \frac{E}{\log(C \frac{E}{\eta})^{\sigma}}.
\end{align*}
In the second line we used \eqref{extension_cr_bound} and a Caccioppoli inequality similar to \cite[Lemma 4.5]{RS17a}, and in the third line we used \eqref{extension_ltwo_bound}. We next choose $r = \min\{r_0, 1/2\}$, where $r_0$ is chosen so that the two terms in the third line above are equal for $r=r_0$. With this choice of $r$ we obtain that 
\begin{equation} \label{extension_nablaw_bound}
\|x_{n+1}^{\frac{1-2s'}{2}} \nabla \tilde{w}\|_{L^2((3/2)\Omega \times (0,1/2))} \leq C \frac{E}{\log(C \frac{E}{\eta})^{\sigma}}
\end{equation}
for some new positive constants $C$ and $\sigma$.

Finally, using the localized trace estimate from \cite[Lemma 4.5]{RS17a} together with \eqref{extension_ltwo_bound} and \eqref{extension_nablaw_bound}, we have 
\[
\|v\|_{H^{s'}_{\overline{\Omega}}} \leq C (\|x_{n+1}^{\frac{1-2s'}{2}} \tilde{w}\|_{L^2((3/2)\Omega \times (0,1/2))} + \|x_{n+1}^{\frac{1-2s'}{2}} \nabla \tilde{w}\|_{L^2((3/2)\Omega \times (0,1/2))}) \leq C \frac{E}{\log(C \frac{E}{\eta})^{\sigma}}
\]
for some $C, \sigma > 0$ that only depend on $n$, $s$, $s'$, $\Omega$, $W$ as required.
\end{proof}

\subsection{Optimality of logarithmic stability}

In order to observe that the logarithmic stability estimate in Proposition \ref{prop:stab} is indeed optimal, i.e.\ to note that an exponential instability is present, we argue similarly as in \cite{RulandSalo_instability}. We claim that the following holds:

\begin{prop}
\label{prop:instab}
Let $s\in (0,1)$, $\Omega =B_1 \subset \R^n$ with $n\geq 1$ and assume that $W= B_{R} \setminus \overline{B}_{R-1}$ for some large constant $R>1$. Then for any $k\in \N$ there exist functions $v_k \in H^{s}(\R^n)$ with $\supp(v_k) \subset \overline{\Omega}$ and $h_k:=(-\D)^s v_k|_{W}$ such that
\begin{align*}
\|v_k \|_{L^2(\Omega)} &=1, \ 
\|h_k\|_{H^{-s}(W)} \leq C e^{-C k}, 
\end{align*}
for some constant $C>0$.
\end{prop}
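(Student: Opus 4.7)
The plan is to construct $v_k$ via high-degree spherical harmonics supported inside $\overline{\Omega}$ and exploit orthogonality to force $(-\Delta)^s v_k$ to be exponentially small on the far annulus $W$. For $n\geq 2$, fix a bump $\phi\in C_c^\infty((1/2,1))$ and let $Y_k$ be a spherical harmonic on $S^{n-1}$ of degree $k$ with $\|Y_k\|_{L^2(S^{n-1})}=1$. Set
\[
v_k(x) := c_\phi\,\phi(|x|)\,Y_k(x/|x|),
\]
with $c_\phi>0$ chosen (independently of $k$) so that $\|v_k\|_{L^2(\Omega)}=1$. Then $v_k\in C_c^\infty(B_1\setminus \overline{B}_{1/2})\subset H^s(\R^n)$ with $\mathrm{supp}(v_k)\subset\overline{\Omega}$. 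The case $n=1$ is handled analogously by replacing $\phi(|x|)Y_k(x/|x|)$ with $\phi(y)P_k(y)$, where $P_k$ is a polynomial of degree $k$ chosen so that $\phi P_k$ is $L^2$-orthogonal to $\{1,y,\ldots,y^{k-1}\}$.

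Since $\mathrm{supp}(v_k)\cap W=\emptyset$, for $x\in W$ the fractional Laplacian reduces to the pointwise integral
\[
(-\Delta)^s v_k(x) = -c_{n,s}\int_{B_1}\frac{v_k(y)}{|x-y|^{n+2s}}\,dy.
\]
With $\nu=(n+2s)/2$, I would expand the kernel via the Gegenbauer generating function
\[
\frac{1}{|x-y|^{n+2s}} = \sum_{j=0}^\infty \frac{|y|^j}{|x|^{n+2s+j}} C_j^\nu(\hat x\cdot\hat y),
\]
which converges absolutely and uniformly on the relevant region since $|y|/|x|\leq 1/(R-1)<1$ for $R$ large. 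The crucial observation is that $\omega\mapsto C_j^\nu(\hat x\cdot\omega)$ is a polynomial of degree $j$ in $\omega$, so its $L^2(S^{n-1})$-expansion uses only spherical harmonics of degree $\leq j$; the Funk--Hecke formula then gives that $\int_{S^{n-1}} C_j^\nu(\hat x\cdot\omega)Y_k(\omega)\,d\omega$ vanishes for $j<k$ and equals $\mu_{j,k}Y_k(\hat x)$ for $j\geq k$. After interchanging sum and integral, this yields
\[
(-\Delta)^s v_k(x) = -c_{n,s}c_\phi\, Y_k(\hat x)\sum_{j\geq k}\frac{\mu_{j,k}}{|x|^{n+2s+j}}\int_0^1\phi(r)r^{n+j-1}\,dr.
\]

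The standard bounds $|C_j^\nu(t)|\leq C_j^\nu(1)\lesssim j^{2\nu-1}$ and $\|Y_k\|_{L^\infty(S^{n-1})}\lesssim k^{(n-1)/2}$, together with the geometric decay in $j$, give for all $x\in W$ that $|(-\Delta)^s v_k(x)|\leq C k^N(R-1)^{-k}$ for some $N=N(n,s)$, and therefore
\[
\|h_k\|_{H^{-s}(W)} \leq \|(-\Delta)^s v_k\|_{L^2(W)} \leq Ce^{-ck}
\]
with $c=\tfrac{1}{2}\log(R-1)>0$, which is the claim. The main obstacle is the bookkeeping of the Funk--Hecke/Gegenbauer expansion and uniform control of the polynomial-in-$k$ prefactors (growth of Gegenbauer coefficients, $L^\infty$ norms of $Y_k$, sum of the geometric tail); these polynomial factors are harmless, as they are absorbed by the exponential gain $(R-1)^{-k}$ coming from the orthogonality.
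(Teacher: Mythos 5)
Your proof is correct and uses the same essential mechanism as the paper's: expand the far-field kernel $|x-y|^{-(n+2s)}$ in an angular (multipole) series, use orthogonality of degree-$k$ spherical harmonics against the low-degree terms to kill the first $k$ terms of the expansion, and then sum the geometric tail to get an $e^{-ck}$ bound. The difference is mostly cosmetic but a bit cleaner on your side. The paper chooses $v_k$ to be $L^2$-normalized Dirichlet eigenfunctions of the Laplacian on $B_1$ (which separate as $j_k(r)H_k(\omega)$ with $H_k$ of degree $k$) and expands $(1+t)^{-(n+2s)/2}$ by Taylor expansion, then reorganizes each power of $t$ into spherical harmonics of degree $\le j$ by hand; you instead take the explicit product $c_\phi\,\phi(|x|)Y_k(\hat x)$ and invoke the Gegenbauer generating function plus Funk--Hecke, which is the canonical packaging of the same algebra. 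Your choice of $v_k$ avoids any appeal to Bessel-function eigenfunction theory and makes the $L^2$-normalization ($c_\phi$ independent of $k$) transparent; the paper's choice carries the extra information $\|v_k\|_{H^s_{\overline{\Omega}}}\sim k^{s/n}$ via Weyl asymptotics, which it uses to comment on the sharpness of the stability estimate \eqref{eq:stab}, but that is not needed for the proposition as stated. Your handling of $n=1$ via a degree-$k$ polynomial $P_k$ chosen so that $\phi P_k$ has vanishing moments of order $<k$ is a genuine improvement: for $n=1$ the sphere $S^0$ carries only degree-$0$ and degree-$1$ spherical harmonics, so the separation-of-variables argument in the paper does not literally produce a degree-$k$ angular factor, and your moment-orthogonality device is the right way to recover the exponential gain. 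One bookkeeping remark: in the geometric tail bound the polynomial factor from $|C_j^\nu(t)|\lesssim j^{2\nu-1}$ should be absorbed before concluding, e.g.\ by splitting $(R-1)^{-j}=(R-1)^{-j/2}(R-1)^{-j/2}$ and bounding $j^{2\nu-1}(R-1)^{-j/2}\le C_R$, which is exactly what your choice $c=\tfrac12\log(R-1)$ accounts for; this matches the paper's analogous step where $|\alpha_{n,s,j}|\le C_{n,s}(1+j)^{s+n/2-1}$ is absorbed.
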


\begin{proof}
We choose the sequence $\{v_k\}_{k\in \N}$ to be a sequence of $L^2(\Omega)$ normalized eigenfunctions of the Dirichlet Laplacian in $\Omega = B_1$ corresponding to the $k$-th eigenvalue $\lambda_k$ (modding out multiplicities). In particular, this implies that $v_k|_{\partial \Omega}=0$ and denoting, with slight abuse of notation, the zero extension of $v_k$ also by $v_k$, the support condition $\supp(v_k) \subset \overline{\Omega}$ follows. The definition of $v_k$ also yields that $\|v_k\|_{H^1(\R^n)}^2 = 1+\lambda_k \sim k^{2/n}$ by Weyl asymptotics, hence by interpolation $c k^{s/n} \leq \|v_k\|_{H^{s}_{\overline{\Omega}}} \leq C k^{s/n}$ where the constants only depend on $n$ and $s$. In particular, \eqref{eq:stab} for $s' = 0$ then turns into
\begin{align*}
k^{s/n} \leq C\exp(Ck^{\mu s/n})\|h_k\|_{H^{-s}(W)} \mbox{ with } h_k=(-\D)^s v_k|_{W}.
\end{align*} 
In order to show that this is optimal (up to the power of $\mu$) we seek to prove that (for a sequence of $k\rightarrow \infty$)
\begin{align}
\label{eq:inst_aim}
\|h_k\|_{H^{-s}(W)} \leq C \exp(-C k^{\nu}) \mbox{ for some value } \nu >0.
\end{align}

In order to deduce \eqref{eq:inst_aim}, we compute $h_k$: For $x\in W=B_{R} \setminus \overline{B}_{R-1}$ we have
\begin{align}
\label{eq:frac_Lapl_comp}
h_k(x) = (-\D)^s v_k(x)
= - c \int\limits_{B_1} \frac{v_k(y)}{|x-y|^{n+ 2s}} \,dy
= - c \int \limits_{0}^{1} r^{n-1} j_k(r) \int\limits_{S^{n-1}} \frac{H_k(\omega)}{|r' \omega' - r \omega|^{n+ 2s}} \,d\omega \,dr,
\end{align}
where we have introduced polar coordinates $x=r' \omega'$, $y=r \omega$ and have used that solutions to the Dirichlet Laplacian in $\Omega= B_1$ separate variables $v_k(r\omega)= j_k(r)H_{k}(\omega)$, where $j_k$ denotes a generalized Bessel function and where $H_k(\omega)$ is a spherical harmonic of degree $k$ (it does not matter, which spherical harmonic we consider, we hence simply choose any spherical harmonic of degree $k$).

Next we study the kernel 
\begin{align*}
|r' \omega' - r \omega|^{-(n+2s)} 
&= (r')^{-(n+2s)}\left|\omega' - \left(\frac{r}{r'}\right) \omega \right|^{-(n+2s)}\\
& = (r')^{-(n+2s)} \left(1 + \left(\frac{r}{r'}\right)^2 - 2 \frac{r}{r'}(\omega \cdot \omega') \right)^{-\frac{n+ 2s}{2}},
\end{align*}
where by our assumption on $W$ we have that $r' \gg r$. A Taylor expansion of the (one-dimensional) function $(1+t)^{-\frac{n+2s}{2}}$ around $t=0$ then yields
\begin{align*}
\left|\omega' - \left(\frac{r}{r'} \right) \omega\right|^{-(n+2s)}
&= \sum\limits_{j=0}^{\infty} \alpha_{n,s,j}\left( \left( \frac{r}{r'} \right)^{2}-2 (\omega\cdot \omega') \left( \frac{r}{r'} \right) \right)^{j}\\
&= \sum\limits_{j=0}^{\infty} \alpha_{n,s,j} \left( \frac{r}{r'} \right)^j \left( \left( \frac{r}{r'} \right) - 2 (\omega'\cdot \omega)  \right)^{j},
\end{align*}
where $|\alpha_{n,s,j}|\leq C_{n,s}(1+j)^{s+n/2-1}$.
This series is converging absolutely as $0<\frac{r}{r'} \ll \frac{1}{3}$ and $\left| \left( \frac{r}{r'} \right) - 2 (\omega'\cdot \omega)  \right| \leq 3$. Moreover, as $ \left( \left( \frac{r}{r'} \right) - 2 (\omega'\cdot \omega)  \right)^{j}$ is a polynomial of degree $j$ in $\omega$, it has an expansion in terms of the spherical harmonics of degree less than $j$:
\begin{align*}
\left( \left( \frac{r}{r'} \right) - 2 (\omega'\cdot \omega)  \right)^{j}
= \sum\limits_{m=0}^{j} \sum\limits_{l=0}^{l_m} \mu_{l,r/r',\omega'} H_{m,l}(\omega).
\end{align*}
In particular, the orthogonality of the spherical harmonics hence implies
\begin{align*}
\int\limits_{S^{n-1}} \frac{H_k(\omega)}{ |r' \omega' - r \omega|^{n+ 2s} } \,d\omega
&= (r')^{-n-2s} \sum\limits_{j=0}^{\infty} \alpha_{n,s,j} \left( \frac{r}{r'} \right)^j  \int\limits_{S^{n-1}}  H_k(\omega) \left( \left( \frac{r}{r'} \right)- 2 (\omega'\cdot \omega) \right)^{j} \,d\omega\\
&= (r')^{-n-2s} \sum\limits_{j=0}^{k-1} \sum\limits_{m=0}^{j} \sum\limits_{l=0}^{l_m}  \alpha_{n,s,j} \mu_{l,r/r',\omega'} \left( \frac{r}{r'} \right)^j  \int\limits_{S^{n-1}}  H_k(\omega)  H_{m,l}(\omega)  \,d\omega\\
& \quad 
+ (r')^{-n-2s} \sum\limits_{j=k}^{\infty} \alpha_{n,s,j} \left( \frac{r}{r'} \right)^j  \int\limits_{S^{n-1}}  H_k(\omega) \left( \left( \frac{r}{r'} \right) - 2 (\omega'\cdot \omega)  \right)^{j} \,d\omega\\
& =  (r')^{-n-2s} \sum\limits_{j=k}^{\infty} \alpha_{n,s,j} \left( \frac{r}{r'} \right)^j  \int\limits_{S^{n-1}}  H_k(\omega) \left( \left( \frac{r}{r'} \right) - 2  (\omega' \cdot \omega) \right)^{j} \,d\omega.
\end{align*}
As a consequence, we estimate
\begin{align*}
\left| \,\int\limits_{S^{n-1}} \frac{H_k(\omega)}{ |r' \omega' - r \omega|^{n+ 2s} } \,d\omega \right|
\leq (r')^{-n-2s}\left| \sum\limits_{j=k}^{\infty} \alpha_{n,s,j} \left( \frac{3 r}{r'} \right)^j  \right| \|H_k\|_{L^1(S^{n-1})}
\leq C_{n,s}  2^{-k},
\end{align*}
where we used that $(r')^{-n-2s} \leq 1$, $\left| \frac{r}{r'} - 2 \omega'\cdot \omega \right|\leq 3$, $\|H_k\|_{L^1(S^{n-1})} \leq C_{n}\|H_k\|_{L^2(S^{n-1})}=C_{n}$ and 
\[
\left| \sum\limits_{j=k}^{\infty} \alpha_{n,s,j} \left( \frac{3 r}{r'} \right)^j \right| \leq 
2^{-k} \left| \sum\limits_{j=k}^{\infty} \alpha_{n,s,j} \left( \frac{6 r}{ r'} \right)^j \right| 
\leq 2^{-k} \sum\limits_{j=k}^{\infty} C_{n,s}(1+j)^{s+n/2-1} \left( \frac{6 r}{ r'} \right)^j \leq C_{n,s} 2^{-k}
\]
if $r'\gg r$ is so large that $\frac{6 r}{ r'} \leq 1/2$ (which implies the absolute convergence of the power series).
Hence,
\begin{align*}
\|h_k\|_{L^2(W)}
\leq \left|\int\limits_{0}^1 r^{n-1} j_k(r)^{2} \,dr \right|^{1/2} C_{n,s} 2^{-k} \|1\|_{L^2(W)}
\leq C_{n,R,s} 2^{-k}
\end{align*}
where the $r$-integral is $\leq 1$ using the normalization $\|v_k\|_{L^2(B_1)} = 1$. By duality, we obtain
\begin{align*}
\|h_k\|_{H^{-s}(W)} 
&= \sup\limits_{\|\varphi\|_{H^s_{\overline{W}}}=1} (h,\varphi)_{W}
\leq \sup\limits_{\|\varphi\|_{H^s_{\overline{W}}}=1} \|h\|_{L^2(W)}\|\varphi\|_{L^2(W)}\\
&\leq \sup\limits_{\|\varphi\|_{H^s_{\overline{W}}}=1} \|h\|_{L^2(W)}\|\varphi\|_{H^s_{\overline{W}}}
\leq \|h\|_{L^2(W)} \leq C_{n,R,s} 2^{-k}.
\end{align*}
This concludes the argument.
\end{proof}

\appendix

\section{Carleman Estimate}
\label{sec:append}
In this section, we provide a self-contained argument for the Carleman estimate, which was used in Section \ref{sec:ucp}.
Here we partially avoid the logarithmic losses from \cite{Rue15}, which in our application to doubling estimates is needed both on an $L^2$ and the gradient level:

\begin{prop}
\label{prop:Carl}
Let $s\in (0,1)$ and $n\geq 1$. Assume that $V\in L^{\infty}(B_5')$.
Let $w \in H^{1}(B_5^+, x_{n+1}^{1-2s})$ with $\supp(w) \subset \overline{B_4^+ \setminus B_{r_1}^+}$ for $r_1 \in (0,1)$ be a solution of
\begin{align}
\label{eq:inhom_app}
\begin{split}
\nabla \cdot x_{n+1}^{1-2s} \nabla w & = f \mbox{ in } B_5^+,\\
\lim\limits_{x_{n+1}\rightarrow 0} x_{n+1}^{1-2s} \p_{n+1} w & = Vw \mbox{ on } B_5'.
\end{split}
\end{align}
Then there exists a constant $\tau_0>1$ such that for any parameter $\tau \geq \tau_0$ and for the weight function $\phi(x):= \psi(|x|)$ with
\begin{align*}
\psi(r) = -\ln(r) + \frac{1}{10}\left( \ln(r) \arctan(\ln(r))- \frac{1}{2} \ln(1+ \ln^2(r))  \right),
\end{align*}
we have for any $r_2\in (2r_1,3)$
\begin{align}
\label{eq:Carl_app_a}
\begin{split}
& \tau^{\frac{1}{2}}\ln(r_2/r_1)^{-1}\|e^{\tau \phi} x_{n+1}^{\frac{1-2s}{2}} |x|^{-1} w \|_{L^2(B_{r_2}^+)}
+ \tau^{-\frac{1}{2}}\ln(r_2/r_1)^{-1} \|e^{\tau \phi} x_{n+1}^{\frac{1-2s}{2}} \nabla w\|_{L^2(B_{r_2}^+)}\\
& + \tau^s \|e^{\tau \phi} (1+\ln^2(|x|))^{-1/2} |x|^{-s} w\|_{L^2(B_5')}\\
&+ \tau \|e^{\tau \phi} (1+\ln^2(|x|))^{-1/2} x_{n+1}^{\frac{1-2s}{2}}|x|^{-1} w \|_{L^2(B_5^+)}
+  \|e^{\tau \phi}  (1+\ln^2(|x|))^{-1/2} x_{n+1}^{\frac{1-2s}{2}} \nabla w \|_{L^2(B_5^+)}\\
&\leq C \tau^{-\frac{1}{2}}\| e^{\tau \phi} |x| x_{n+1}^{\frac{2s-1}{2}} f\|_{L^2(B_5^+)} + \tau^{\frac{1-2s}{2}} \|e^{\tau \phi} |x|^{s} V w\|_{L^2(B_5')}.
\end{split}
\end{align}
\end{prop}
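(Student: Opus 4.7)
The strategy is a splitting-method Carleman argument adapted to the degenerate weight $x_{n+1}^{1-2s}$ and the Robin boundary condition. First I would pass to conformal cylindrical coordinates $t = -\ln|x|$, $\theta = x/|x|$, so that the ball $B_r^+$ becomes a half-infinite cylinder $\{t > -\ln r\} \times \mathbb{S}^n_+$, and the factor $|x|^{-1}$ becomes harmless. In these coordinates the operator $L = \nabla \cdot x_{n+1}^{1-2s}\nabla$ takes the form $\theta_{n+1}^{1-2s} e^{-(n+1+2(1-2s)/2)t}(\partial_t^2 + (\text{sphere-part}) + \text{first order})$, and the weight $\phi = \psi(|x|)$ becomes $\psi(e^{-t}) = t + \frac{1}{10}(-t\arctan(-t) - \tfrac{1}{2}\ln(1+t^2))$, whose second derivative $\phi'' = \tfrac{1}{10}(1+t^2)^{-1}$ is strictly positive. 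This positivity is what the convexification is designed to deliver, and it will drive the commutator on the left-hand side.

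After multiplying by a suitable power of $x_{n+1}$ and conjugating, I would set $v = e^{\tau\phi} x_{n+1}^{(1-2s)/2} w$ and consider the conjugated operator $L_\phi := e^{\tau\phi} x_{n+1}^{(2s-1)/2} L ( x_{n+1}^{(1-2s)/2} e^{-\tau\phi} \cdot)$. Split $L_\phi = S + A$ into its (formally) symmetric and antisymmetric pieces in $L^2(\mathrm{cylinder} \cap \{\theta_{n+1} > 0\},\, \theta_{n+1}^{1-2s}\,d\theta\,dt)$. Expanding $\|L_\phi v\|^2 = \|Sv\|^2 + \|Av\|^2 + ([S,A]v,v) + (\text{boundary})$ and throwing away $\|Sv\|^2 + \|Av\|^2$, the main task is to show
\[
([S,A]v, v) \geq c\,\tau(\phi')^2 \|v\|^2 + c\,\tau\phi'' \|v\|^2 + c\,\tau\phi''(\|\partial_t v\|^2 + \|\nabla_\theta v\|^2) - \text{absorbable errors},
\]
where the $\phi''$ term is precisely what produces the $(1+\ln^2|x|)^{-1/2}$ weights on both $v$ and its gradient in \eqref{eq:Carl_app_a}. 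Following the splitting arguments of [KLW16] and [KRS16b], I would integrate by parts carefully in $t$ to move one derivative from $Av$ onto $Sv$ so that the gradient of $v$ appears positively on the left (rather than being absorbed); this is the mechanism that upgrades the estimate from [Rue15] to carry the $\nabla w$ term on the left-hand side.

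The boundary at $\theta_{n+1} = 0$ (equivalently $x_{n+1} = 0$) produces a trace contribution proportional to $\int_{B_5'} (\partial_{n+1} \text{weight}) |v|^2 \,dx'$, which, after matching the powers of $x_{n+1}$, becomes the $\tau^{2s} |x|^{-2s}(1+\ln^2|x|)^{-1}$-weighted boundary term on the left of \eqref{eq:Carl_app_a}. The Robin boundary condition enters through the identity $\lim_{x_{n+1}\to 0} x_{n+1}^{1-2s}\partial_{n+1} w = Vw$: integrating the Robin term against the conjugated variable produces $\int e^{2\tau\phi} |x|^{2s} V w \cdot w \,dx'$ on the right, which is exactly the second term on the right of \eqref{eq:Carl_app_a}. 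The $t$-cutoffs at $t = -\ln r_2$ and $t = -\ln r_1$ yield the two ``interior'' terms on the left with the $\ln(r_2/r_1)^{-1}$ prefactor (this prefactor arises because those boundary terms have to be averaged over the cutoff region of logarithmic length $\ln(r_2/r_1)$, a standard artifact of the splitting method).

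\textbf{Main obstacle.} The delicate point is the boundary trace from the Robin condition in combination with the gradient term that now lives on the left. Since $V \in L^\infty$, the boundary term $\|e^{\tau\phi}|x|^s Vw\|_{L^2(B_5')}$ can only be absorbed after bounding $\|w|_{x_{n+1}=0}\|$ in terms of the interior weighted $H^1$ norm via a trace estimate compatible with the weight $x_{n+1}^{1-2s}$; this trace estimate costs $\tau^s$ and forces $s \geq 1/4$ exactly so that $\tau^{1-2s} \leq \tau^{1/2}$ (so the loss in $\tau$ matches what the rest of the estimate can afford). The other subtle point is keeping $\nabla_\theta v$ positive on the left while only having $\phi''$ at one's disposal in front of it: this requires the careful integration by parts in $t$ described above and is the reason the $(1+\ln^2|x|)^{-1/2}$ weight must appear on the gradient term. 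Once both issues are dealt with, undoing the substitutions $v \mapsto w$ and $(t,\theta) \mapsto x$ yields exactly \eqref{eq:Carl_app_a}.
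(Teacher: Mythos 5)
Your general framework (conformal polar coordinates, conjugation by $e^{\tau\phi} x_{n+1}^{(1-2s)/2}$, $S$--$A$ splitting and the positive commutator) is the right starting point, and your reading of the convexified weight $\phi''\sim(1+\ln^2|x|)^{-1}$ as the source of the $(1+\ln^2|x|)^{-1/2}$ weights on the left is correct. But there is a gap that the paper's proof specifically engineers around, and you do not address it: when you expand $\|L_{\phi}v\|^2 = \|Sv\|^2 + \|Av\|^2 + ([S,A]v,v) + (\text{boundary})$, the boundary contributions from $2(Sv,Av)$ are \emph{not} of the form $\int (\partial_{n+1}\text{weight})|v|^2$ as you claim. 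After the integration by parts in $t$ (driven by the term $-2\tau\varphi'\partial_t$ in $A$) they are of the form
\[
-2\tau\big(\varphi'\,\theta_n^{\frac{2s-1}{2}}\partial_t v,\ \lim_{\theta_n\to 0}\theta_n^{1-2s}\partial_\nu(\theta_n^{\frac{2s-1}{2}}v)\big)_0
\ +\ 2\tau\big(\lim_{\theta_n\to 0}\theta_n^{1-2s}\partial_\nu(\theta_n^{\frac{2s-1}{2}}\partial_t v),\ \theta_n^{\frac{1-2s}{2}}v\big)_0,
\]
so they couple the Robin data to $\partial_t v$ at the boundary. With $V$ merely in $L^\infty$ you have no control of $\partial_t w|_{x_{n+1}=0}$, so these terms cannot be estimated directly, and a naive trace estimate does not save you because the resulting power of $\tau$ cannot be absorbed. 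The paper resolves this by \emph{decomposing} $u=u_1+u_2$ in conformal coordinates, where $u_1$ solves an auxiliary \emph{elliptic} problem (adding $-K^2\tau^2\theta_n^{1-2s}$) that carries both the source $\overline f$ and the Robin data $\overline h$ and is estimated via Lax--Milgram and exponentially weighted energy estimates (\eqref{eq:ell_3}), while the remainder $u_2$ satisfies \emph{zero} Neumann data, so that the two boundary terms above vanish identically and the commutator computation for $u_2$ goes through cleanly. Your citation of [KLW16, KRS16b] is the right cue, but the "splitting" those references supply is precisely this $u_1+u_2$ decomposition, not merely the $S$--$A$ split; without it the estimate for $L^\infty$ Robin data would not close.

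Two smaller corrections. First, the restriction $s\geq 1/4$ does \emph{not} appear in this Carleman estimate --- Proposition \ref{prop:Carl} is stated and proved for all $s\in(0,1)$. That restriction only enters when the estimate is \emph{applied} in Lemma \ref{lem:doubling}, where the right-hand boundary term $\tau^{\frac{1-2s}{2}}\|e^{\tau\phi}|x|^s Vw\|_{L^2(B_5')}$ must be absorbed into the left-hand boundary term $\tau^s\|e^{\tau\phi}(1+\ln^2|x|)^{-1/2}|x|^{-s}w\|_{L^2(B_5')}$; this requires $s\geq (1-2s)/2$, i.e.\ $s\geq 1/4$. Second, the $\ln(r_2/r_1)^{-1}$ prefactor does not come from averaging over a cutoff region; in the paper it is produced by applying Poincar\'e's inequality on the interval $(t_1,t_2+1)$ of length $\sim|t_1-t_2| = \ln(r_2/r_1)$ to the antisymmetric piece (see \eqref{eq:anti1}), and the corresponding gradient term with the same prefactor is extracted from the symmetric part by testing $S\overline w$ against $\overline w\chi$ (Step 3b). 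Incorporating the $u_1+u_2$ splitting and these two corrections would make your outline match the actual proof.
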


Here \eqref{eq:inhom_app} is interpreted in a weak sense similarly as in \eqref{eq:weak_1}, \eqref{eq:weak_2}. In particular, the boundary data are interpreted in this formal sense.

\begin{proof}
We first introduce conformal coordinates, $x=e^{t}\theta$, $t\in \R$, $\theta \in S^n_+$, and pass from the function $w(x)$ to the function $u(t,\theta):= e^{\frac{n-2s}{2}t} w(e^{t}\theta)$ (by conjugation with the corresponding weights). Since $w$ is assumed to be a solution of \eqref{eq:inhom_app}, the function $u$ solves the equation
\begin{align}
\label{eq:op_conf}
\begin{split}
\left( \theta_n^{1-2s}\p_t^2 - \theta_n^{1-2s}\frac{(n-2s)^2}{4} + \nabla_{S^n}\cdot \theta_n^{1-2s} \nabla_{S^n} \right) u & = \overline{f} \mbox{ in } \R \times S^{n}_+,\\
\lim\limits_{\theta_n \rightarrow 0} \theta_n^{1-2s} \nu \cdot \nabla_{S^n} u & =: \overline{h} \mbox{ on } \R \times S^{n-1},
\end{split}
\end{align}
where $\overline{h}(t,\theta)= e^{2st} V(e^t \theta) u(t,\theta)$, $\overline{f}(t,\theta) = e^{\frac{n+2+2s}{2}t}f(e^{t}\theta)$ and $\theta_n:= \frac{x_{n+1}}{|x|}$. Again, the equation \eqref{eq:op_conf} is here interpreted in a weak sense, where the boundary data are assumed to be formal, similarly as in \eqref{eq:weak}.
In these conformal coordinates and with $\varphi(t)=\psi(e^t)$, the Carleman estimate \eqref{eq:Carl_app_a} turns into
\begin{align}
\label{eq:Carl_ap}
\begin{split}
& \tau^{\frac{1}{2}} |t_2-t_1|^{-1}  \|e^{\tau \varphi} \theta_n^{\frac{1-2s}{2}} u \|_{L^2((t_1,t_2) \times S^n_+)} 
+\tau^{-\frac{1}{2}} |t_2 -t_1|^{-1}\|e^{\tau \varphi} \theta_n^{\frac{1-2s}{2}} \p_t u\|_{L^2((t_1,t_2) \times S^n_+)}\\
&+ \tau^{-\frac{1}{2}} |t_2 -t_1|^{-1}\|e^{\tau \varphi}\theta_n^{\frac{1-2s}{2}}
 \nabla_{S^n} u\|_{L^2((t_1,t_2) \times S^n_+)}\\
& + \tau^{s} \|e^{\tau \varphi}|\varphi''|^{\frac{1}{2}}  u\|_{L^2(\R \times S^{n-1})}
+ \tau \| e^{\tau \varphi}|\varphi''|^{\frac{1}{2}} \theta_n^{\frac{1-2s}{2}} u \|_{L^2(\R \times S^{n}_+)}\\
& +  \|e^{\tau \varphi}|\varphi''|^{\frac{1}{2}} \theta_n^{\frac{1-2s}{2}} \nabla_{S^n} u \|_{L^2(\R \times S^n_+)} 
+  \|e^{\tau \varphi}|\varphi''|^{\frac{1}{2}} \theta_n^{\frac{1-2s}{2}} \p_t u \|_{L^2(\R \times S^n_+)}\\
&\leq C \tau^{-\frac{1}{2}} \|e^{\tau \varphi}\theta_n^{\frac{2s-1}{2}} \overline{f}\|_{L^2(\R \times S^n_+)} + \tau^{\frac{1-2s}{2}} \|e^{\tau \varphi} \overline{h}\|_{L^2(\R \times S^{n-1})}.
\end{split}
\end{align}
Here $t_1= \ln(r_1)$, $t_2= \ln(r_2)$.
In order to prove \eqref{eq:Carl_ap}, we pursue a splitting strategy, separating the problem into an elliptic and a subelliptic estimate: More precisely, we set $u=u_1+ u_2$, where $u_1$ is a solution to
\begin{align}
\label{eq:u_1}
\begin{split}
\left(\theta_{n}^{1-2s} \p_t^2 + \nabla_{S^n} \cdot \theta_n^{1-2s} \nabla_{S^n} -\theta_n^{1-2s}\frac{(n-2s)^2}{4}- K^2 \tau^2 \theta_n^{1-2s}\right)u_1 &= \overline{f} \mbox{ in } \R \times S^{n}_+,\\
\lim\limits_{\theta_{n}\rightarrow 0} \theta_{n}^{1-2s} \p_{\nu} u_1 & =  \overline{h} \mbox{ on } \R \times S^{n-1}.
\end{split}
\end{align}
Here $K\geq 1$ is a large constant, whose precise value will be chosen later, and $\p_{\nu}:= \nu\cdot \nabla_{S^n}$. We note that this equation is elliptic (as an equation in the space variables and in the parameter $\tau$).
As a consequence, the function $u_2 = u-u_1$ solves
\begin{align}
\label{eq:u_2}
\begin{split}
\left(\theta_{n}^{1-2s} \p_t^2 + \nabla_{S^n} \cdot \theta_n^{1-2s} \nabla_{S^n}-\theta_n^{1-2s}\frac{(n-2s)^2}{4}\right)u_2 &= -K^2 \tau^2 \theta_n^{1-2s} u_1 \mbox{ in } \R \times S^{n}_+,\\
\lim\limits_{\theta_{n}\rightarrow 0} \theta_{n}^{1-2s} \p_{\nu} u_2 & = 0 \mbox{ on } \R \times S^{n-1}.
\end{split}
\end{align}
In the sequel, we derive separate estimates for $u_1$ and $u_2$: For $u_1$ we will use purely ``elliptic estimates", while for $u_2$ we use a subelliptic Carleman estimate.\\ 

\emph{Step 1: Estimate for $u_1$.}
We first comment on the well-posedness of the elliptic problem \eqref{eq:u_1}: The solvability of \eqref{eq:u_1} follows from the Lax-Milgram theorem. Indeed, the weak formulation of \eqref{eq:u_1} reads
\begin{align}
\label{eq:weak}
\begin{split}
&(\theta_n^{1-2s} \p_t u_1, \p_t \xi) + (\theta_n^{1-2s} \nabla_{S^n} u_1, \nabla_{S^n} \xi) + \frac{(n-2s)^2}{4}(\theta_n^{1-2s} u_1, \xi)
+ K^2 \tau^2 (\theta_n^{1-2s} u_1, \xi)\\
& = -(\bar{f},\xi) +(\bar{h},\xi)_0 \mbox{ for all } \xi \in H^{1}(\R \times S^n_+, \theta_n^{1-2s}),
\end{split}
\end{align}
where the index $0$ denotes the restriction of the $L^2$ inner product onto the boundary $\R \times S^{n-1}$ and where all the other inner products are on $\R \times S^{n}_+$.
In order to apply the Lax-Milgram theorem, we note that for $\overline{f}\in L^2(\R \times S^{n}_+, \theta_n^{2s-1})$ and $\overline{h} \in L^2(\R \times S^{n-1})$ the mapping
\begin{align*}
\xi \mapsto -(\bar{f},\xi) +(\bar{h},\xi)_0 
\end{align*}
is a bounded functional on $H^{1}(\R \times S^{n}_+, \theta_n^{1-2s})$. This follows from the estimates
\begin{align*}
&|(\overline{f},\xi)| 
\leq \|\theta_n^{\frac{2s-1}{2}} \overline{f}\|_{L^2}\|\theta_n^{\frac{1-2s}{2}} \xi\|_{L^2},\\
& |(\overline{h},\xi)_0| \leq \|\overline{h}\|_{0}\|\xi\|_{0}
\leq C \|\overline{h}\|_{0}(\|\theta_n^{\frac{1-2s}{2}} \xi\|_{L^2}
+ \|\theta_n^{\frac{1-2s}{2}} \nabla_{S^n} \xi\|_{L^2}),
\end{align*}
where we have used a (weighted) trace inequality to control the boundary term (c.f. for example Lemma 2.2. in \cite{FF14}). Thus, the Lax-Milgram theorem is applicable in the space $H^{1}(\R \times S^{n}_+, \theta_n^{1-2s})$ and yields the existence of a unique weak solution $u_1\in H^1(\R \times S^n_+, \theta_n^{1-2s})$ solving \eqref{eq:weak}. 
\\
We next observe that the solution $u_1$ decays fast as $|t|\rightarrow \infty$. This can be seen by considering test functions $e^{\tau \tilde{\varphi}}$, where $\tilde{\varphi}$ is only $t$ dependent, grows linearly for at least one of the limits $t \rightarrow  \pm \infty$ and $| \tilde{\varphi}'|, | \tilde{\varphi}''|<C$. 
Indeed, let 
\begin{align*}
\tilde{\varphi}_{M,\delta}= \min\{M, \max\{-M,\tilde{\varphi}\}\}\ast \gamma_{\delta},
\end{align*}
be a mollified truncation of $\tilde{\varphi}$,
where $\gamma_{\delta}$ is a standard mollifier and $M\gg 1$ denotes a large constant (which will be sent to $\infty$ later). 
We test the equation \eqref{eq:u_1} (or equivalently \eqref{eq:weak}) with the function $e^{2\tau \tilde{\varphi}_{M,\delta}} u_1$ and derive the corresponding energy estimates. This yields
\begin{align*}
&(\theta_{n}^{1-2s} \p_t u_1, \p_t (e^{2\tau \tilde{\varphi}_{M,\delta}} u_1))
+ (\theta_{n}^{1-2s} \nabla_{S^n} u_1 , \nabla_{S^n}(e^{2\tau \tilde{\varphi}_{M,\delta}} u_1)) 
-(\theta_{n}^{1-2s}\p_{\nu} u_1, e^{2\tau \tilde{\varphi}_{M,\delta}}u_1)_0 \\
&+ \frac{(n-2s)^2}{4}(\theta_n^{1-2s}u_1, e^{2\tau \tilde{\varphi}_{M,\delta}}u_1) + K^2 \tau^2( \theta_{n}^{1-2s} u_1, e^{2\tau \tilde{\varphi}_{M,\delta}}u_1)
 = - (\overline{f}, e^{2\tau \tilde{\varphi}_{M,\delta}} u_1).
\end{align*}
Carrying out the associated integration by parts (using that $\tilde{\varphi}$ only depends on $t$), we infer
\begin{align}
\label{eq:ell}
\begin{split}
&\|e^{\tau \tilde{\varphi}_{M,\delta}}\theta_{n}^{\frac{1-2s}{2}} \p_t u_1\|_{L^2}^2 +
\|e^{\tau \tilde{\varphi}_{M,\delta}}\theta_{n}^{\frac{1-2s}{2}} \nabla_{S^n} u_1\|_{L^2}^2  +
K^2\tau^2 \|e^{\tau \tilde{\varphi}_{M,\delta}}\theta_{n}^{\frac{1-2s}{2}} u_1\|_{L^2}^2\\
& \quad  + \frac{(n-2s)^2}{4}\|e^{\tau \tilde{\varphi}_{M,\delta}}\theta_{n}^{\frac{1-2s}{2}} u_1\|_{L^2}^2
+ 2\tau(e^{\tau \tilde{\varphi}_{M,\delta}}\theta_{n}^{1-2s} \p_t u_1, (\p_t \tilde{\varphi}_{M,\delta}) e^{\tau \tilde{\varphi}_{M,\delta}} u_1)\\
&\quad -(e^{\tau \tilde{\varphi}_{M,\delta}}\theta_{n}^{1-2s}\p_{\nu} u_1, e^{\tau \tilde{\varphi}_{M,\delta}}u_1)_0
= -(e^{\tau \tilde{\varphi}_{M,\delta}} \overline{f} , e^{\tau \tilde{\varphi}_{M,\delta}}u_1).
\end{split}
\end{align}
The first four terms are positive. The other ones are in general unsigned, however by our choice of the weight function (and by choosing $K\geq 1$ large enough) they can be controlled by the positive contributions in the first line. The term on the right hand side can be bounded by the inhomogeneity $\overline{f}$ and by terms which can be absorbed into the left hand side of \eqref{eq:ell}:
\begin{align*}
|(e^{\tau \tilde{\varphi}_{M,\delta}} \overline{f} , e^{\tau \tilde{\varphi}_{M,\delta}}u_1)|
\leq \tau^{-2}\|e^{\tau \tilde{\varphi}_{M,\delta}} \theta_{n}^{\frac{2s-1}{2}}\overline{f}\|_{L^2}^2 + \tau^2 \|e^{\tau \tilde{\varphi}_{M,\delta}} \theta_{n}^{\frac{1-2s}{2}} u_1\|_{L^2}^2.
\end{align*}
The boundary term in \eqref{eq:ell} is controlled by 
\begin{align}
\label{eq:boundary}
\begin{split}
|(e^{\tau \tilde{\varphi}_{M,\delta}}\theta_{n}^{1-2s}\p_{\nu} u_1, e^{\tau \tilde{\varphi}_{M,\delta}}u_1)_0| \leq 
C_{\epsilon} \tau^{-2s}\|e^{\tau \tilde{\varphi}_{M,\delta}}\lim\limits_{\theta_n \rightarrow 0}\theta_{n}^{1-2s}\p_{\nu} u_1\|_{0}^2 + \epsilon \tau^{2s}\| e^{\tau \tilde{\varphi}_{M,\delta}}u_1\|_0^2
\end{split}
\end{align}
for a suitably small constant $\epsilon>0$.
Absorbing the bulk error terms (for a sufficiently large choice of $K>1$), we hence infer
\begin{align}
\label{eq:ell_1}
\begin{split}
&\tau\|e^{\tau \tilde{\varphi}_{M,\delta}}\theta_{n}^{\frac{1-2s}{2}} \p_t u_1\|_{L^2} +
\tau \|e^{\tau \tilde{\varphi}_{M,\delta}}\theta_{n}^{\frac{1-2s}{2}} \nabla_{S^n} u_1\|_{L^2}  +
\frac{K}{2}\tau^2 \|e^{\tau \tilde{\varphi}_{M,\delta}}\theta_{n}^{\frac{1-2s}{2}} u_1\|_{L^2}
\\
&\leq  \|e^{\tau \tilde{\varphi}_{M,\delta}} \theta_{n}^{\frac{1-2s}{2}} \overline{f}\|_{L^2}
+ C_{\epsilon}\tau^{1-s}\|e^{\tau \tilde{\varphi}_{M,\delta}}\lim\limits_{\theta_{n}\rightarrow 0}\theta_{n}^{1-2s}\p_{\nu} u_1\|_{0} + \epsilon\tau^{1+s}\| e^{\tau \tilde{\varphi}_{M,\delta}}u_1\|_0.
\end{split}
\end{align}
By the boundary bulk interpolation estimate from \cite{Rue15}, i.e., by the estimate
\begin{align}
\label{eq:boundary_bulk_trace}
\|u\|_{L^2(S^{n-1})} \leq C(\tau^{1-s}\|\theta_n^{\frac{1-2s}{2}}u\|_{L^2(S^n_+)} + \tau^{-s}\|\theta_n^{\frac{1-2s}{2}}\nabla_{S^n} u\|_{L^2(S^n_+)}),
\end{align}
we can further add boundary terms onto the left hand side of \eqref{eq:ell_1} and infer 
\begin{align*}
\begin{split}
&\tau\|e^{\tau \tilde{\varphi}_{M,\delta}}\theta_{n}^{\frac{1-2s}{2}} \p_t u_1\|_{L^2} +
\tau \|e^{\tau \tilde{\varphi}_{M,\delta}}\theta_{n}^{\frac{1-2s}{2}} \nabla_{S^n} u_1\|_{L^2}  +
\frac{K}{2}\tau^2 \|e^{\tau \tilde{\varphi}_{M,\delta}}\theta_{n}^{\frac{1-2s}{2}} u_1\|_{L^2}
+ C \tau^{1+s}\|e^{\tau \tilde{\varphi}_{M,\delta}}u_1\|_0
\\
&\leq  \|e^{\tau \tilde{\varphi}_{M,\delta}} \theta_{n}^{\frac{1-2s}{2}} \overline{f}\|_{L^2}
+ C_{\epsilon}\tau^{1-s}\|e^{\tau \tilde{\varphi}_{M,\delta}}\lim\limits_{\theta_n \rightarrow 0}\theta_{n}^{1-2s}\p_{\nu} u_1\|_{0} + \epsilon \tau^{1+s}\| e^{\tau \tilde{\varphi}_{M,\delta}}u_1\|_0.
\end{split}
\end{align*}
For $\epsilon>0$ sufficiently small, we can thus absorb the zeroth order boundary term, which leads to
\begin{align}
\label{eq:ell_2}
\begin{split}
&\tau\|e^{\tau \tilde{\varphi}_{M,\delta}}\theta_{n}^{\frac{1-2s}{2}} \p_t u_1\|_{L^2} +
\tau \|e^{\tau \tilde{\varphi}_{M,\delta}}\theta_{n}^{\frac{1-2s}{2}} \nabla_{S^n} u_1\|_{L^2}  +
\frac{K}{2}\tau^2 \|e^{\tau \tilde{\varphi}_{M,\delta}}\theta_{n}^{\frac{1-2s}{2}} u_1\|_{L^2}
+ C \tau^{1+s}\|e^{\tau \tilde{\varphi}_{M,\delta}}u_1\|_0
\\
&\leq  \|e^{\tau \tilde{\varphi}_{M,\delta}} \theta_{n}^{\frac{1-2s}{2}} \overline{f}\|_{L^2}
+ C_{\epsilon}\tau^{1-s}\|e^{\tau \tilde{\varphi}_{M,\delta}}\overline{h}\|_{0}.
\end{split}
\end{align}
We remark that here and in the sequel, the constant $C>1$ may change from line to line without further comment, but does not depend on $\tau>0$.
With \eqref{eq:ell_2} at hand and by using the compact supports of the data $\overline{f}$, $\overline{h}$ we pass to the limits $M\rightarrow \infty$ and $\delta \rightarrow 0$. On the one hand, this proves the claimed fast decay of $u_1$ at $|t|\rightarrow \infty$ and thus in particular allows us to formulate Carleman estimates for the function $u_1$ with linearly growing weight functions. On the other hand, by choosing $\tilde{\varphi}= \varphi$, we deduce
\begin{align}
\label{eq:ell_3}
\begin{split}
&\tau\|e^{\tau \varphi}\theta_{n}^{\frac{1-2s}{2}} \p_t u_1\|_{L^2} +
\tau \|e^{\tau \varphi}\theta_{n}^{\frac{1-2s}{2}} \nabla_{S^n} u_1\|_{L^2}  +
\frac{K}{2}\tau^2 \|e^{\tau \varphi}\theta_{n}^{\frac{1-2s}{2}} u_1\|_{L^2}
+ C \tau^{1+s}\|e^{\tau \varphi}u_1\|_0
\\
&\leq  \|e^{\tau \varphi} \theta_{n}^{\frac{1-2s}{2}} \overline{f}\|_{L^2}
+ C_{\epsilon}\tau^{1-s}\|e^{\tau \varphi}\overline{h}\|_{0},
\end{split}
\end{align}
which proves the exponentially weighted estimate for $u_1$.\\

\emph{Step 2: Commutator estimate for $u_2$.}\\
We derive the estimate for $u_2$ by the usual conjugation argument and exploit the fact that $u_2$ has vanishing Neumann data. More precisely, we start by conjugating the equation \eqref{eq:u_2} by $\theta_n^{\frac{2s-1}{2}}$. This separates the spherical and the radial variables. Defining $\tilde{u}_2:= \theta_n^{\frac{1-2s}{2}}u_2$, we obtain the equation
\begin{align}
\label{eq:u22}
\begin{split}
(\p_t^2 + \theta_n^{\frac{2s-1}{2}} \nabla_{S^n} \cdot \theta_n^{1-2s} \nabla_{S^n} \theta_n^{\frac{2s-1}{2}} - \frac{(n-2s)^2}{4}) \tilde{u}_2 & = - K^2 \tau^2 \theta_n^{\frac{1-2s}{2}} u_1 \mbox{ in } \R\times S^{n}_+,\\
\lim\limits_{\theta_n \rightarrow 0} \theta_n^{1-2s} \p_{\nu} \theta_n^{\frac{2s-1}{2}} \tilde{u}_2 & = 0 \mbox{ on } \R \times S^{n-1}.
\end{split}
\end{align}
Next we conjugate the problem with the Carleman weight. To this end, we set $v_2 = e^{\tau \varphi} \tilde{u}_2$. We remark that by the fast decay of the auxiliary function $u_1$ (c.f. the truncation argument in Step 1) and the compact support of the original function $u$, also the auxiliary function $u_2$ has fast decay as $|t|\rightarrow \infty$. In particular, this yields that $v_2 \in H^1(\R \times S^n_+, \theta_n^{1-2s})$ and allows us to formulate the corresponding Carleman estimates for $u_2$. The function $v_2$ solves the equation
\begin{align}
\label{eq:u222}
\begin{split}
(\p_t^2 + \tau^2|\varphi'|^2 - 2\tau \varphi' \p_t - \tau  \varphi'' + \tilde{\Delta}_{S^{n}}-\frac{(n-2s)^2}{4}) v_2 & = - K^2 \tau^2 \theta_n^{\frac{1-2s}{2}} e^{\tau \varphi}u_1 \mbox{ in } \R\times S^{n}_+,\\
\lim\limits_{\theta_n \rightarrow 0} \theta_n^{1-2s} \p_{\nu} \theta_n^{\frac{2s-1}{2}} v_2 & = 0 \mbox{ on } \R \times S^{n-1},
\end{split}
\end{align}
in a weak sense,
where for ease of notation we have abbreviated $\tilde{\Delta}_{S^n}:= \theta_n^{\frac{2s-1}{2}} \nabla_{S^n} \cdot \theta_n^{1-2s} \nabla_{S^n} \theta_n^{\frac{2s-1}{2}}$.
Up to boundary terms, this yields the splitting into the symmetric and antisymmetric parts of the operator:
\begin{align*}
S &= \p_t^2 + \tau^2 |\varphi'|^2 + \tilde{\D}_{S^n} - \frac{(n-2s)^2}{4},\\
A &= -2 \tau \varphi' \p_t - \tau \varphi''.
\end{align*}
Hence, the commutator becomes
\begin{align*}
[S,A] = - 4 \tau \varphi'' \p_t^2 - 2\tau \varphi''' \p_t - 2\tau \varphi''' \p_t - \tau \varphi'''' + 4 \tau^3 (\varphi')^2 \varphi''.
\end{align*}
Thus, after integrating by parts, we obtain that for $L_{\varphi}=S+A$
\begin{align}
\label{eq:Carl_conj}
\begin{split}
\|L_{\varphi}v_2\|_{L^2}^2
& = \|A v_2\|_{L^2}^2 + \|Sv_2\|_{L^2}^2 + ([S,A]v_2, v_2)\\ 
& \quad - 2\tau (\varphi' \lim\limits_{\theta_n \rightarrow 0} \theta_n^{\frac{2s-1}{2}}\p_t v_2, \lim\limits_{\theta_n\rightarrow 0} \theta_n^{1-2s}\p_{\nu} \theta_n^{\frac{2s-1}{2}}v_2)_0\\
& \quad + 2\tau (\lim\limits_{\theta_n \rightarrow 0} \theta_n^{1-2s} \p_{\nu} \theta_n^{\frac{2s-1}{2}}\p_t v_2, \lim\limits_{\theta_n \rightarrow 0} \theta_n^{\frac{1-2s}{2}} v_2)_0.
\end{split}
\end{align}

We note that both boundary terms are well-defined (for the first one, this follows from the assumption that the Neumann derivative is in $L^2$, for the second one, it follows from the fact that the equation \eqref{eq:u_2} can be differentiated with respect to the tangential direction with a controlled right hand side) and that by the zero Neumann boundary conditions for $v_2$ both contributions vanish. Thus, inserting the expression for the commutator into \eqref{eq:Carl_conj} and integrating by parts in the tangential direction, we further infer
\begin{align*}
\begin{split}
\|L_{\varphi}v_2\|_{L^2}^2
& = \|A v_2\|_{L^2}^2 + \|Sv_2\|_{L^2}^2 +
4 \tau (\varphi'' \p_t v_2, \p_t v_2) + 4 \tau^3 ((\varphi')^2 (\varphi'') v_2,v_2)\\
& \quad -2 \tau (\varphi''' \p_t v_2, v_2) - \tau (\varphi'''' v_2, v_2).
\end{split}
\end{align*}
We remark that by density and approximation arguments in weighted Sobolev spaces, c.f. for instance \cite{CK14}, 
we may assume that $u_1 \in C_c^{\infty}(\mathbb{R}^{n+1}_+)$, so that we can invoke regularity results similar as for the Neumann problem from the Appendix of \cite{KRS16}, in order to make sense of the second derivative contributions appearing in the integration by parts estimates. 
Using the explicit expression for $\varphi$ (and choosing $\tau \geq \tau_0$ for some sufficiently large constant $\tau_0>1$), the non-positive terms can be absorbed into the positive commutator contributions, yielding the bound
\begin{align}
\label{eq:Carl_conj0001}
\begin{split}
\|L_{\varphi}v_2\|_{L^2}^2
& \geq \|A v_2\|_{L^2}^2 + \|Sv_2\|_{L^2}^2 +
 3\tau \||\varphi''|^{1/2} \p_t v_2\|_{L^2}^2 + 3 \tau^3 \|(\varphi') |\varphi''|^{1/2} v_2\|_{L^2}^2.
\end{split}
\end{align}
By using the symmetric part, it is further possible to also upgrade this to a full gradient estimate (i.e. to include the spherical gradient). Although a similar argument will be used in Step 3b below, we discuss the details: 
Spelling out the symmetric part $S$, testing with $\varphi'' v_2$ and using the explicit form of the Carleman weight as well as the bound \eqref{eq:Carl_conj0001} yields (for a sufficiently small constant $c>0$) 
\begin{align*}
c \tau \||\varphi''|^{1/2} \theta_n^{\frac{1-2s}{2}}\nabla_{S^n} \theta_n^{\frac{2s-1}{2}}  v_2 \|_{L^2}^2
&\leq c \tau |(|\varphi''|^{1/2} S v_2, v_2)| + c \tau \||\varphi''|^{1/2} \p_t v_2\|_{L^2}^2 + c \tau^3 \||\varphi''|^{1/2} \varphi' v_2\|_{L^2}^2\\
& \quad + c \tau \frac{(n-2s)^2}{4} \||\varphi''|^{1/2}v_2\|_{L^2}^2\\
&\leq c \|S v_2\|_{L^2}^2 +  c\tau^2 \||\varphi''|^{1/2}  v_2\|_{L^2}^2 + 2c \tau \||\varphi''|^{1/2} \p_t v_2\|_{L^2}^2 \\
& \quad + 2c \tau^3 \||\varphi''|^{1/2} \varphi' v_2\|_{L^2}^2 
 + 2c \tau \frac{(n-2s)^2}{4} \||\varphi''|^{1/2}v_2\|_{L^2}^2\\
&\leq  \|L_{\varphi} v_2\|_{L^2}^2.
\end{align*}

Moreover, combined with the boundary-bulk trace inequality \eqref{eq:boundary_bulk_trace}, this implies
\begin{align}
\label{eq:Carl_conj1}
\begin{split}
C \|L_{\varphi}v_2\|_{L^2}
& \geq \|A v_2\|_{L^2} + \|Sv_2\|_{L^2} \\
& \qquad +
\tau^{1/2} \||\varphi''|^{1/2} \tilde{\nabla} v_2\|_{L^2} +  \tau^{3/2} \|(\varphi') |\varphi''|^{1/2} v_2\|_{L^2} + \tau^{\frac{1}{2}+s}\||\varphi''|^{1/2}v_2\|_0,
\end{split}
\end{align}
where we abbreviated $\tilde{\nabla}:=(\p_t, \theta_n^{\frac{1-2s}{2}} \nabla_{S^n} \theta_n^{\frac{2s-1}{2}})$.
\\

\emph{Step 3: Derivation of the strengthened bulk estimates without logarithmic losses.}

Due to the convexification of the Carleman weight $\varphi$ (which gives rise to logarithmic errors in the form of $\varphi''$), the commutator term does not directly control the first two contributions in \eqref{eq:Carl_app_a} (when applied to $u_2$). To infer this additional control, we directly exploit the antisymmetric and symmetric terms of the operator.\\

\emph{Step 3a: Dealing with the $L^2$ terms.}
We begin by deriving the additional $L^2$ contribution in \eqref{eq:Carl_app_a}. This follows as in \cite[Remark 4]{Rue15} by studying the contribution $\| Av_2 \|_{L^2}$ in \eqref{eq:Carl_conj1}. Using the triangle inequality and setting $\overline{w}:= \theta_n^{\frac{2s-1}{2}} v_2 = e^{\tau \varphi} u_2$ and $v_1:= e^{\tau \varphi}u_1$, where $u_1$ is the function from Step 1, we have $\overline{w} + v_1 = e^{\tau \varphi} u$ and we obtain
\begin{align}
\label{eq:anti_app}
\begin{split}
c_0\|\theta_n^{\frac{1-2s}{2}} A \overline{w}\|_{L^2(\R \times S^n_+)}
& \geq c_0\|\theta_n^{\frac{1-2s}{2}} A (\overline{w}+v_1)\|_{L^2(\R \times S^n_+)} - c_0\|\theta_n^{\frac{1-2s}{2}} A v_1 \|_{L^2(\R \times S^n_+)}\\
&\geq 2c_0\tau\|\varphi' \theta_{n}^{\frac{1-2s}{2}} \p_t (\overline{w}+v_1)\|_{L^2(\R\times S^n_+)} - c_0\tau \|\varphi'' \theta_n^{\frac{1-2s}{2}} (\overline{w}+v_1)\|_{L^2(\R \times S^n_+)}\\
& \quad  - 2c_0\tau \|\theta_n^{\frac{1-2s}{2}} \p_t v_1 \|_{L^2(\R \times S^n_+)}- c_0\tau \|\varphi'' \theta_n^{\frac{1-2s}{2}} v_1\|_{L^2(\R \times S^n_+)}
\\
&\geq 2c_0 \tau \| \varphi' \theta_n^{\frac{1-2s}{2}} \p_t (\overline{w}+v_1) \|_{L^2((t_1,t_2+1)\times S^n_+)}  
- c_0\tau \|\varphi'' \theta_n^{\frac{1-2s}{2}} \overline{w} \|_{L^2(\R \times S^n_+)}\\
& \quad - 2c_0\tau \|\theta_n^{\frac{1-2s}{2}} \p_t v_1 \|_{L^2(\R \times S^n_+)}- 2 c_0\tau \|\varphi'' \theta_n^{\frac{1-2s}{2}}  v_1\|_{L^2(\R \times S^n_+)}.
\end{split}
\end{align}
Since (for $c_0>0$ sufficiently small) the second contribution can be controlled by the commutator terms in the Carleman inequality for $v_2$ (i.e. by the terms in the second line of \eqref{eq:Carl_conj1}) and since the third and fourth terms are controlled by the Carleman inequality from \eqref{eq:ell_3}, 
we only consider the first term on the right hand side of \eqref{eq:anti_app} in more detail. 
Using the form of $\varphi'$ and the support assumption on 
$\overline{w}+v_1$ in connection with Poincar\'e's inequality yields 
\begin{align}
\label{eq:anti1}
\begin{split}
\| \varphi' \theta_n^{\frac{1-2s}{2}} \p_t (\overline{w}+v_1)\|_{L^2((t_1, t_2+1)\times S^n_+)}
&\geq  \frac{1}{2} \| \theta_n^{\frac{1-2s}{2}} \p_t (\overline{w}+v_1)\|_{L^2((t_1, t_2+1)\times S^n_+)}\\
&\geq C^{-1} |t_1-t_2|^{-1} \| \theta_n^{\frac{1-2s}{2}} (\overline{w}+v_1) \|_{L^2((t_1, t_2+1) \times S^n_+)},
\end{split}
\end{align}
where we have used that $|t_1-t_2|\geq 1$.
This hence implies the desired control on the first contribution in \eqref{eq:Carl_ap}. 
In particular, returning to Cartesian coordinates yields the desired $L^2$ contribution, i.e., the control on the first term on the left hand side of \eqref{eq:Carl_app_a} (applied to $u_2$).\\

\emph{Step 3b: Dealing with the gradient term.}
Next we seek to deduce the claimed improved control on the gradient, i.e. we seek to derive the estimate for the second term on the left hand side of \eqref{eq:Carl_app_a} (applied to $u_2$). This is split into two parts: The antisymmetric part yields improved control on the radial component of the gradient, while the symmetric part yields improved control on the spherical part of the gradient. Indeed, using the expression for the antisymmetric part, we directly obtain from \eqref{eq:anti_app} that 
\begin{align}
\label{eq:anti12}
\begin{split}
c_1\|\theta_n^{\frac{1-2s}{2}} A \overline{w}\|_{L^2(\R \times S^n_+)}
&\geq 2c_1 \tau \| \varphi' \theta_n^{\frac{1-2s}{2}} \p_t (\overline{w}+v_1) \|_{L^2((t_1,t_2+1)\times S^n_+)}  
- c_1\tau \|\varphi'' \theta_n^{\frac{1-2s}{2}} \overline{w} \|_{L^2(\R \times S^n_+)}\\
& \quad - 2c_1\tau \|\theta_n^{\frac{1-2s}{2}} \p_t v_1 \|_{L^2(\R \times S^n_+)}- 2 c_1\tau \|\varphi'' \theta_n^{\frac{1-2s}{2}}  v_1\|_{L^2(\R \times S^n_+)}.
\end{split}
\end{align}
Since for $c_1 > 0$ small the second, third and fourth contributions can again be absorbed into the positive commutator terms in \eqref{eq:Carl_conj1} and the estimate \eqref{eq:ell_3} respectively, we obtain the desired bound for the radial part of the gradient of $\theta_n^{\frac{1-2s}{2}}(\overline{w}+v_1)$, i.e.\ we also control the second contribution in the first line of \eqref{eq:Carl_ap} (even with $\tau^{1/2}$ in front of this term instead of $\tau^{-1/2}$). By the triangle inequality and the gradient estimates from \eqref{eq:ell_3} this also entails estimates for $\theta_n^{\frac{1-2s}{2}}\p_t \overline{w}$.

In order to infer the remaining control on the spherical part of the gradient (i.e.\ on the contribution 
$\|\theta_n^{\frac{1-2s}{2}}\nabla_{S^n} \overline{w}\|_{L^2(\R \times S^n_+)}$),
we rely on the symmetric part of the operator. More precisely, for some smooth cut-off function $\chi$ which only depends on $t$, which is supported in $(-\infty, t_2+1)$ and which is equal to one on $(-\infty,t_2)$ (the purpose of the cut-off function here is to only produce $L^2$ terms, which are controlled by the $L^2$ term from \eqref{eq:anti_app}) we obtain
\begin{align}
\label{eq:symm_app}
\begin{split}
-(S \theta_n^{\frac{1-2s}{2}} \overline{w}, \theta_n^{\frac{1-2s}{2}} \overline{w} \chi)_{L^2(\R \times S^{n}_+)}
&= (\p_t \overline{w}, \theta_n^{1-2s}\p_t(\overline{w} \chi))_{L^2(\R \times S^n_+)}
+ \frac{(n-2s)^2}{4}(\overline{w},\theta_n^{1-2s}\overline{w} \chi)_{L^2(\R \times S^n_+)}\\
& \quad - \tau^2((\varphi')^2 \overline{w}, \theta_n^{1-2s}\overline{w} \chi)_{L^2(\R \times S^n_+)}
 + (\nabla_{S^{n}} \overline{w}, \theta_n^{1-2s} \nabla_{S^{n}}\overline{w} \chi)_{L^2(\R \times S^n_+)}\\
& \quad + (\lim\limits_{\theta_n \rightarrow 0}\theta_n^{1-2s} \nu \cdot \nabla_{S^{n-1}}  \overline{w}, \lim\limits_{\theta_n \rightarrow 0}  \overline{w}\chi)_{0}.
\end{split}
\end{align}
If multiplied by $c_2 |t_1-t_2|^{-2} >0$ (where $c_2>0$ is a sufficiently small constant), the non-positive terms are controlled by terms, which are already present on the left hand side of the Carleman inequality (i.e. by terms from \eqref{eq:Carl_conj1} and by terms from Step 3a). More precisely, (after multiplying the whole expression in \eqref{eq:symm_app} by $c_2 |t_1-t_2|^{-2}$) the only non-positive bulk terms are given by
\begin{align}
\label{eq:non_pos}
-\frac{c_2}{2}|t_1-t_2|^{-2} (\overline{w}, \theta_n^{1-2s} \overline{w} \chi'')_{L^2(\R \times S^n_+)}
-c_2|t_1 -t_2|^{-2}\tau^2((\varphi')^2 \overline{w}, \theta_n^{1-2s}\overline{w})_{L^2(\R \times S^{n}_+)},
\end{align}
which can be absorbed into the left hand side of \eqref{eq:anti1} if $c_2 >0$ is sufficiently small (we remark that the first term in \eqref{eq:non_pos} comes from the term $(\p_t \overline{w}, \theta_n^{1-2s} \p_t(\overline{w} \chi))_{L^2(\R \times S^n_+)}$ if the differentiation falls on $\chi$ and a subsequent integration by parts). The boundary term in \eqref{eq:symm_app} is well-defined and vanishes, which can be seen by an argument similar as the one used in Step 2. Hence, we obtain that
\begin{align*}
&c_2 |t_1-t_2|^{-2} \|\theta_n^{\frac{1-2s}{2}} \nabla_{S^{n}} \overline{w} \|_{L^2((t_1,t_2)\times S^n_+)}^2\\
&\leq c_2 |t_1-t_2|^{-2}|(S \theta_n^{\frac{1-2s}{2}} \overline{w}, \theta_n^{\frac{1-2s}{2}} \overline{w} \chi)_{L^2(\R \times S^n_+)}| + Cc_2 \| \theta_n^{\frac{1-2s}{2}} A \overline{w}\|_{L^2(\R \times S^n_+)}^2\\
& \quad + Cc_2 |t_2-t_1|^{-2} |([S,A] \theta_n^{\frac{1-2s}{2}} \overline{w}, \theta_n^{\frac{1-2s}{2}} \overline{w})_{L^2(\R \times S^n_+)}|\\
& \quad +  C\|e^{\tau \varphi} \theta_{n}^{\frac{1-2s}{2}} \overline{f}\|_{L^2(\R \times S^n_+)}
+ C_{\epsilon}\tau^{1-s}\|e^{\tau \varphi}\overline{h}\|_{0}
\\
&\leq c_2 |t_1-t_2|^{-2} \|S \theta_n^{\frac{1-2s}{2}} \overline{w}\|_{L^2(\R \times S^n_+)}^2 
+ c_2 |t_1-t_2|^{-2} \|\theta_n^{\frac{1-2s}{2}} \overline{w}\|_{L^2(\R \times S^n_+)}^2 \\
& \quad + Cc_2 \|\theta_n^{\frac{1-2s}{2}} A \overline{w}\|_{L^2(\R \times S^n_+)}^2 + C c_2 |t_2-t_1|^{-2} |([S,A] \theta_n^{\frac{1-2s}{2}} \overline{w}, \theta_n^{\frac{1-2s}{2}} \overline{w})_{L^2(\R \times S^n_+)}|\\
& \quad +  C\|e^{\tau \varphi} \theta_{n}^{\frac{1-2s}{2}} \overline{f}\|_{L^2(\R \times S^n_+)}
+ C_{\epsilon}\tau^{1-s}\|e^{\tau \varphi}\overline{h}\|_{0}
\\
& \leq C \|L_{\varphi} \theta_n^{\frac{1-2s}{2}} \overline{w}\|_{L^2(\R \times S^n_+)}+  C\|e^{\tau \varphi} \theta_{n}^{\frac{1-2s}{2}} \overline{f}\|_{L^2(\R \times S^n_+)}
+ C_{\epsilon}\tau^{1-s}\|e^{\tau \varphi}\overline{h}\|_{0}.
\end{align*}
In particular this contribution is controlled by the Carleman inequality (by assumption we have that $|t_1-t_2|\geq 1$), we therefore also obtain that the full gradient term
\begin{align*}
c_2 |t_1-t_2|^{-1} \|\theta_n^{\frac{1-2s}{2}}\nabla_{S^{n}} \overline{w}\|_{L^2((t_1,t_2)\times S^n_+)} + c_2 |t_1-t_2|^{-1} \|\theta^{\frac{1-2s}{2}}_n \p_t \overline{w}\|_{L^2((t_1,t_2)\times S^n_+)}
\end{align*}
is controlled by the Carleman inequality. This concludes the argument for adding the two terms on the left hand side of \eqref{eq:Carl_app_a} (for $u_2$).\\

\emph{Step 4: Combination of Steps 1-3.}
We combine the estimates for $u_1$ and $u_2$. By virtue of the triangle inequality and abbreviating $\nabla = (\p_t, \nabla_{S^n})$, this yields
\begin{align}
\label{eq:sub_ell}
\begin{split}
& \tau^{1/2}\|e^{\tau \varphi}(\varphi'')^{1/2}\theta_{n}^{\frac{1-2s}{2}} \nabla u\|_{L^2}   +
\tau^{3/2} \|e^{\tau \varphi}(\varphi'')^{1/2} \theta_{n}^{\frac{1-2s}{2}} u\|_{L^2}\\
&+ C \tau^{\frac{1}{2}+s}\|e^{\tau \varphi}|\varphi''|^{1/2}u\|_0
 + |t_2-t_1|^{-1}\|e^{\tau \varphi}\tilde{\chi} \theta_n^{\frac{1-2s}{2}} u\|_{L^2}
+ |t_2-t_1|^{-1}\|e^{\tau \varphi}\tilde{\chi}\theta_n^{\frac{1-2s}{2}} \nabla u\|_{L^2}
\\
&\leq
\tau^{1/2}\|e^{\tau \varphi}|\varphi''|^{\frac{1}{2}}\theta_{n}^{\frac{1-2s}{2}} \nabla u_1\|_{L^2}  +
\tau^{3/2} \|e^{\tau \varphi}|\varphi''|^{\frac{1}{2}}\theta_{n}^{\frac{1-2s}{2}} u_1\|_{L^2}
+  \tau^{\frac{1}{2}+s}\|e^{\tau \varphi} |\varphi''|^{1/2} u_1\|_0\\
& \quad + |t_2-t_1|^{-1}\|e^{\tau \varphi}\tilde{\chi}\theta_n^{\frac{1-2s}{2}} u_1\|_{L^2}
+ |t_2-t_1|^{-1}\|e^{\tau \varphi}\tilde{\chi}\theta_n^{\frac{1-2s}{2}} \nabla u_1\|_{L^2}\\
&\quad + 
\tau^{1/2} \|e^{\tau \varphi} |\varphi''|^{\frac{1}{2}}\theta_{n}^{\frac{1-2s}{2}} \nabla u_2\|_{L^2}  + \tau^{3/2} \|e^{\tau \varphi} |\varphi''|^{\frac{1}{2}} \theta_{n}^{\frac{1-2s}{2}} u_2\|_{L^2}+ \tau^{\frac{1}{2}+s}\|e^{\tau \varphi} |\varphi''|^{1/2} u_2\|_0\\
& \quad
+ |t_2-t_1|^{-1}\|e^{\tau \varphi} \tilde{\chi} \theta_n^{\frac{1-2s}{2}} u_2\|_{L^2}
+ |t_2-t_1|^{-1}\|e^{\tau \varphi} \tilde{\chi} \theta_n^{\frac{1-2s}{2}} \nabla u_2\|_{L^2}
\\
&\leq
C \|e^{\tau \varphi} \theta_{n}^{\frac{2s-1}{2}} \overline{f}\|_{L^2}
+ C \tau^{1-s}\|e^{\tau \varphi} \overline{h}\|_{0}
+ C K^2 \tau^2 \|e^{\tau \varphi} \theta_{n}^{\frac{1-2s}{2}} u_1\|_{L^2}\\
&\leq
C \|e^{\tau \varphi} \theta_{n}^{\frac{2s-1}{2}} \overline{f}\|_{L^2}
+ C \tau^{1-s}\|e^{\tau \varphi}\overline{h}\|_{0}.
\end{split}
\end{align}
Here $\tilde{\chi}(t)$ denotes the characteristic function of the interval $(t_1,t_2)$.
Rewriting \eqref{eq:sub_ell} in Cartesian coordinates, then concludes the argument for the proposition.
\end{proof}

\bibliographystyle{alpha}

\end{document}